\documentclass[10pt, a4paper]{article}

\usepackage[utf8]{inputenc}
\usepackage[T1]{fontenc}

\usepackage{graphicx}
\usepackage{amsmath}
\usepackage{amsthm}
\usepackage{amsfonts}
\usepackage{xcolor}
\usepackage{amssymb}
\usepackage[hyphens]{url}
\usepackage[hidelinks]{hyperref}
\usepackage[hyphenbreaks]{breakurl}
\usepackage{fullpage}
\usepackage{esvect,mathtools}
\usepackage{subfiles}
\usepackage[normalem]{ulem}
\usepackage{enumitem}
\usepackage[numbers,square,sort]{natbib}
\usepackage[capitalize]{cleveref}

\newtheorem{theorem}{Theorem}
\numberwithin{theorem}{section} 
\newtheorem{lemma}[theorem]{Lemma}

\newtheorem{claim}[theorem]{Claim}
\theoremstyle{remark}

\theoremstyle{definition}
\newtheorem{definition}[theorem]{Definition}

\crefname{lemma}{lemma}{lemmas}
\Crefname{lemma}{Lemma}{Lemmas}
\newcommand{\bE}{\mathbb{E}}

\newcommand{\hide}[1]{}

\title{\vspace{-0.9cm} Nearly tight bounds for induced subdivisions}
\author{
Zach Hunter\thanks{Department of Mathematics, ETH Z\"urich, Switzerland. Email: {\tt \{zach.hunter, aleksa.milojevic, patryk.morawski, benjamin.sudakov\}@math.ethz.ch}. Research supported in part by SNSF grant 200021-228014.}
\and Aleksa Milojevi\'c\footnotemark[1] 
\and 
Patryk Morawski\footnotemark[1]
\and Benny Sudakov\footnotemark[1]}
\date{}

\begin{document}

\maketitle
\begin{abstract}
Subdivisions of complete graphs play a central role in combinatorics, having deep connections to structural, extremal, and topological aspects of graph theory. A celebrated conjecture of Mader, proved independently by Bollob\'as and Thomason and by Koml\'os and Szemer\'edi, states that every graph of average degree of order $h^2$ contains a subdivision of $K_h$. 

In this paper, we consider the induced variant of this problem. A theorem of K\"uhn and Osthus implies that, for every fixed graph $H$ and every $s\ge 1$, graphs of sufficiently large average degree contain either a copy of $K_{s,s}$ or an induced subdivision of $H$. However, even for $H=K_h$, the best previous quantitative bounds were far from optimal.

We prove nearly tight bounds for forcing induced subdivisions of $K_h$. We show that every $K_{s,t}$-free graph of average degree $\Omega_{s,t}(h^{2(s-1)}\log^{7(s-1)} h)$ contains an induced subdivision of $K_h$, and that every $C_{2k}$-free graph with $k \geq 3$ and average degree $\Omega_k(h\log^5 h)$ contains an induced subdivision of $K_h$. These bounds substantially improve the previously known results and are nearly optimal in both settings. They
also hold if $K_h$ is replaced by any other graph on $h$ vertices.
\end{abstract}

\section{Introduction}
A \emph{subdivision} of a graph $H$ is a graph obtained from $H$ by replacing certain edges of $H$ by internally vertex-disjoint paths. Subdivisions play a central role in graph theory, and their study has a very long history, going back to some of its earliest structural results. One of the most classical examples is Kuratowski's theorem, which states that a graph is planar if and only if it contains no subdivision of $K_5$ or $K_{3,3}$. Thus, subdivisions arise naturally as the basic obstructions to planarity. 

Since planar graphs are $4$-colorable by the Four Color Theorem, it is very natural to ask whether avoiding subdivisions of complete graphs might more generally control the chromatic number of a graph. This point of view led Haj\'os to conjecture that every graph of chromatic number at least $r$ contains a subdivision of $K_r$. Although Haj\'os' conjecture was eventually shown to be false, it had enormous influence on the development of the subject and helped establish clique subdivisions as a central object of study. It is also closely related to Hadwiger's conjecture, one of the central problems in graph theory, which asserts that every graph of chromatic number at least $r$ contains a $K_r$-minor.

A different and highly influential direction was initiated by Mader, who studied the relationship between the average degree of a graph and the appearance of subdivisions. In 1967, Mader proved that for every graph $H$ there exists $d=d(H)$ such that every graph of average degree at least $d$ contains a subdivision of $H$~\cite{Mader67}. In the special case $H=K_h$, Mader, and independently Erd\H{o}s and Hajnal, conjectured that average degree of order $h^2$ should already force a subdivision of $K_h$. This was proved by Bollob\'as and Thomason~\cite{BollobasThomason98} and independently by Koml\'os and Szemer\'edi~\cite{KomlosSzemeredi96}. 

Over the years, much work has been devoted to understanding how this picture changes in sparse graph classes. The initial question in this direction, raised by Mader, asks whether every $C_4$-free graph of average degree at least $ch$ must contain a subdivision of $K_h$ for some absolute constant $c>0$. More generally, one can ask how large a clique subdivision must appear in graphs of high average degree which exclude a fixed graph $H$. 

A number of very interesting results in this direction were obtained by K\"uhn and Osthus, who showed, for example, that every graph of sufficiently large girth contains a clique subdivision whose order is linear in its average degree \cite{KuhnOsthus02, KuhnOsthus06Girth}. Moreover, something even stronger is true: a graph $G$ of sufficiently large girth contains a $K_{\delta(G)+1}$-subdivision, where $\delta(G)$ is the minimum degree of a vertex in $G$.  K\"uhn and Osthus also studied clique subdivisions in $C_4$-free graphs~\cite{KuhnOsthus04C4}, and the corresponding extremal problem in $K_{s,t}$-free and more general bipartite graphs~\cite{KuhnOsthus06Bip}. Their results were later tightened by Liu and Montgomery~\cite{LiuMontgomery17}, who proved Mader's conjecture fully and, more generally, showed that for every $s,t\ge 2$ there exists $c=c(s,t)>0$ such that every $K_{s,t}$-free graph of average degree at least $c h^{\frac{2(s-1)}{s}}$ contains a subdivision of $K_h$. Let us also mention that Balogh, Liu and Sharifzadeh \cite{BLS14} showed that $C_6$-free graphs of average degree $ch$ already contain subdivisions of $K_h$.

In recent years, a new trend of studying \textit{induced variants} of classical extremal problems emerged. One source of motivation comes from geometry: many intersection graphs of geometric objects avoid certain induced subgraphs, which can be used as a stepping stone to obtaining novel extremal results about these graphs \cite{HMST25}. For example, the intersection graphs of curves in the plane (known as \textit{string graphs}) avoid induced subdivisions of $K_5$ where every edge is subdivided at least once. 

Induced subgraphs also appear naturally in the context of $\chi$-bounded graph classes. A graph class is said to be $\chi$-bounded if there is a function $f$ such that $\chi(G)\leq f(\omega(G))$ for each graph $G$ in the class, where $\omega (G)$ denotes the clique number of $G$. One of the most important questions related to $\chi$-boundedness is the Gyarfas-Sumner conjecture, which asks whether the class of graphs avoiding induced copies of a fixed tree $T$ is $\chi$-bounded \cite{Gyarfas,Sumner}. In his PhD thesis, Scott showed that the class of graphs avoiding induced subdivisions of a fixed tree $T$ is $\chi$-bounded, and asked whether the same could be true if the tree $T$ is replaced by a general graph $H$. While this was disproved by the construction of Pawlik, Kozik, Krawczyk, Laso\'n, Micek, Trotter and Walczak~\cite{PawlikKozikKrawczykLasonMicekTrotterWalczak14}, understanding the quantitative aspects of this question remains interesting \cite{NguyenScottSeymour}.

A fundamental theorem of K\"uhn and Osthus~\cite{KuhnOsthus04Induced} states that for every graph $H$ and every integer $s\ge 1$, there exists $d=d(H,s)$ such that every graph of average degree at least $d$ contains either a copy of $K_{s,s}$ or an induced subdivision of $H$. Thus, once one excludes large complete bipartite subgraphs, graphs with no induced subdivision of a fixed graph form a degree-bounded class. Note that it is indeed necessary to exclude $K_{s, s}$ if one hopes to find any nontrivial induced subgraphs, since otherwise the host graph could simply be a complete bipartite graph, which does not have any interesting induced subgraphs. For a long time, however, essentially no reasonable quantitative bounds were known for this theorem of K\"uhn and Osthus. Motivated in part by this gap, Bonamy, Bousquet, Pilipczuk, Rzążewski, Thomass\'e, and Walczak \cite{BBPRTW22} conjectured that when $s$ grows, polynomial average degree in $s$ should already force an induced $K_{h}$-subdivision in $K_{s, s}$-free graphs.

Recently, there has been substantial progress on this problem. Du, Gir\~ao, Hunter, McCarty and Scott~\cite{DuGiraoHunterMcCartyScott25} proved that there exists a constant $C>0$ such that for all $s,h\in\mathbb{N}$, every $K_{s,s}$-free graph of average degree $h^{Cs^3}$ contains induced subdivisions of $K_h$, giving the first polynomial bound for this problem. Shortly afterwards, Gir\~ao and Hunter~\cite{GiraoHunter25} and Bourneuf, Buci\'c, Cook and Davies~\cite{BBCD24} studied the regime where $s$ is large compared to $H$, and proved that for every graph $H$ and every $s\ge 2$, every $K_{s, s}$-free graph of average degree $\Omega_H(s^{A|V(H)|^2})$ contains an induced subdivision of $H$ (where $A>0$ is an absolute constant).

The main goal of this paper is to understand this question in the regime where $h$ is growing much more precisely. Namely, we ask the following.

\begin{quote}
\emph{What average degree in an $F$-free graph guarantees an induced subdivision of $K_h$?}
\end{quote}

We essentially resolve this problem in two most natural sparse settings: $K_{s,t}$-free graphs and $C_{2k}$-free graphs, thinking of $s, t$ and $k$ as constants. We show that in both cases one can force induced subdivisions of complete graphs with bounds that are nearly best possible, up to polylogarithmic factors.

\begin{theorem}\label[theorem]{theorem:kst}
Let $h \geq t \geq s\geq 2$, let $H$ be a graph on $h$ vertices and let $G$ be a $K_{s,t}$-free graph with average degree $\Omega_{s, t}(h^{2(s-1)}\log^{7(s-1)} h)$. Then $G$ contains an induced subdivision of $H$.
\end{theorem}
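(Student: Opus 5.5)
We plan to reduce to a well-connected sparse expander and then embed the subdivision greedily, protecting inducedness by deleting the neighbourhoods of what has already been built. Throughout write $d=\Omega_{s,t}(h^{2(s-1)}\log^{7(s-1)}h)$. The target is to find $h$ branch vertices and $\binom h2$ paths joining the pairs prescribed by $E(H)$, such that the union is an induced subgraph. That is, no edges of $G$ may run between distinct paths other than at shared branch vertices, and no path may have chords.

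\textbf{Step 1: regularisation and expansion.} First pass to a subgraph $G'$ with good expansion, using a Koml\'os--Szemer\'edi type sublinear expander lemma. $G'$ keeps average degree $\ge d/2$ and has roughly comparable minimum and maximum degree, after a standard bounded-max-degree reduction which costs only polylogarithmic factors. Since $G'$ is $K_{s,t}$-free, the K\H{o}v\'ari--S\'os--Tur\'an bound and supersaturation show that small sets expand very strongly. In particular, any set $U$ with $|U|\le d^{1/(s-1)}$ has $|N(U)|\gtrsim |U|\,d$ up to constants. This is the exact point where the exponent $2(s-1)$ enters: the amount of room we need scales like $h^2$ times polylog, and $K_{s,t}$-freeness guarantees "independent-like" expansion only up to the scale $d^{1/(s-1)}$.

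\textbf{Step 2: branch vertices and stars.} Choose $h$ branch vertices $v_1,\dots,v_h$ that are far apart in $G'$, at distance larger than the intended path lengths of $O(\log^{c} n)$. For each $v_i$ choose $h-1$ neighbours, forming an induced star. The $K_{s,t}$-freeness lets us pick these neighbours pairwise non-adjacent, since $N(v_i)$ induces a $K_{s-1,t}$-free graph and hence contains a large independent set once $d\gg h^{s-1}$. From each chosen neighbour grow a small disjoint "ball" structure of size about $h\log^{O(1)}$ by iterated expansion. These units serve as the ends of the future paths.

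\textbf{Step 3: greedy connection avoiding neighbourhoods.} Process the edges of $H$ one at a time. For the current pair, connect the corresponding two units by a shortest path in $G'-F$, where $F$ is the closed neighbourhood of all previously built paths together with all other units. A shortest path is automatically induced. Removing $N[\cdot]$ of earlier paths ensures there are no edges between the new path and the old ones.

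Total forbidden size is $|F|\lesssim h^2\cdot \ell\cdot \Delta$, where $\ell$ is the path length and $\Delta$ the maximum degree. We need the robust expansion of the sublinear expander to tolerate deleting $F$. In the expander framework this means $|F|$ must be small relative to the expansion of the balls we connect, namely $|F|\ll |B|\,\varepsilon(|B|)$ for balls $B$ of size $\approx d^{1/(s-1)}$-scale multiples. Balancing $h^2\ell\Delta$ against $d^{1/(s-1)}\cdot d$ yields the condition $d^{1/(s-1)}\gtrsim h^2\,\mathrm{polylog}$, i.e. $d\gtrsim h^{2(s-1)}\log^{O(s-1)}h$, matching the statement.

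\textbf{Main obstacle.} The hard step is making the deleted neighbourhoods small enough. Naively $|N(P)|\approx \ell d$ is far too big, since it is comparable to the entire expansion. We will instead route each path through low-degree "thin" regions. Alternatively we delete only a carefully chosen subset: we forbid the new path from using vertices adjacent to old paths, but we do this by using the $K_{s,t}$-free counting bound to show that the union of neighbourhoods of old paths meets any expanding ball in a small proportion. Concretely, any vertex set $X$ of size $x$ has at most about $x^{1/s}\,n^{1-1/s}+tn$ "popular" common-neighbour incidences. So vertices with many neighbours in the old structure are rare, and a path can be found using only vertices with \emph{no} neighbours in it via a weighted expansion argument. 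Getting this bound to lose only $\log^{7(s-1)}h$, rather than a larger power, is where we expect most of the technical effort, through careful choice of the sizes of the units and of the path lengths $\ell=O(\log^{3}h)$.
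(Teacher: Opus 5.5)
Your proposal has a genuine gap. The counting in Step 3, $|F|\lesssim h^2\ell\Delta\ll d^{1/(s-1)}\cdot d$, rests on three assumptions: $\Delta=O(d)$, $\ell=\mathrm{polylog}(h)$, and that deleting closed neighbourhoods of earlier paths removes only a small part of each end-set. None of these is established.

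\emph{Bounded maximum degree.} The Koml\'os--Szemer\'edi lemma gives only $\delta\ge d/2$. A $K_{s,t}$-free graph can look like a very unbalanced bipartite graph: $|A|\ge K|B|$, most edges between $A$ and $B$, and $K$ unbounded in terms of $d$ and $h$. You give no way to regularise such a graph at $\mathrm{polylog}(h)$ cost, and the available tools do not do it; the paper's dichotomy loses $\log^2K$. Without regularisation, paths must pass through vertices of huge degree whose neighbourhoods swallow the end-sets. The paper applies Lemma~\ref{lemma:dichotomy} with $K=\mathrm{poly}(d)$ and treats the lopsided case by a different argument, Lemma~\ref{lemma:lopsided_case}. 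There it finds independent sets $A'\subseteq A$, $B'\subseteq B$ such that every $a\in A'$ has exactly two neighbours in $B'$. It then takes a non-induced $K_h$-subdivision (Theorem~\ref{theorem:noninduced_subdivisions}) in the auxiliary graph on $B'$.

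\emph{Path length.} An expander on $n$ vertices only guarantees connecting paths of length $O(\log^3 n)$, and they may need length at least $\log n/\log\Delta$. Since $n$ is not bounded in terms of $h$ or $d$, the quantity $h^2\ell\Delta$ need not be small, and your claim $\ell=O(\log^3h)$ is unsupported. The paper uses drifting-away paths (Lemma~\ref{lemma:drifting_away_paths}). Each path then has only $O((\ell+1)^3\log^3\Delta)$ vertices within distance $\ell$ of the branch structures, so only this local part of the forbidden set has to be controlled.

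\emph{The codegree problem.} What you call the main obstacle is real, and ``thin regions'' or a ``weighted expansion argument'' are not arguments. For $s\ge3$, two vertices can have codegree close to $d$. So a single vertex of an earlier path may be adjacent to almost all of a unit, and a bound on $|F|$ says nothing about how much of each end-set survives. What is needed is a uniform bound like condition~3 of Lemma~\ref{lemma:collection_of_paths}: every vertex outside the branch structures has few neighbours in the end-sets.

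The missing idea is random sparsification (Lemma~\ref{lemma:reduction_to_k2t}). Keep each vertex with probability $p\approx d^{-1+1/(s-1)}/\log d$. Then delete one vertex from each surviving pair with at least $\log d$ surviving common neighbours, plus the few vertices of too-large surviving degree. This is cheap for two reasons. Pairs with codegree at least $d^{1-1/(s-1)}$ in $G$ are rare by K\H{o}v\'ari--S\'os--Tur\'an. Pairs with lower codegree rarely keep $\log d$ common neighbours. The result is an induced $K_{2,\log d}$-free subgraph with average degree $d'\approx d^{1/(s-1)}/\log d$ and $\Delta=O(d')$.

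In that subgraph, take $S_i=N(v_i)$. Every other vertex has fewer than $\log d$ neighbours in $S_i$, and every half of $S_i$ has $\gtrsim d'^2/\log d$ neighbours. This beats the $O(h^2d'\log^3d')$ forbidden vertices near the branch sets once $d'\gtrsim h^2\,\mathrm{polylog}$. That, not expansion of $d^{1/(s-1)}$-sized sets in $G$ itself, is where the exponent $2(s-1)$ comes from.
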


\begin{theorem}\label[theorem]{theorem:C2k}
Let $H$ be a graph on $h$ vertices and let $G$ be a $C_{2k}$-free graph with average degree $\Omega_k(h\log^5 h)$, where $k\geq 3$ is a positive integer. Then $G$ contains an induced subdivision of $H$.
\end{theorem}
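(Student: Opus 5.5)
The plan is to reduce to a well-structured host graph, build the branch vertices with large private neighbourhoods, and then connect pairs by induced paths one at a time while keeping everything non-adjacent to what has already been built. The key point is that in a $C_{2k}$-free graph with $k\ge 3$ neighbourhoods are nearly disjoint in a strong sense. In particular the common neighbourhood of two vertices is small, and there is no $K_{2,t}$ for large $t$ beyond what $C_{2k}$-freeness allows.

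\textbf{Step 1: cleaning.} Set $d = C_k h\log^5 h$. First pass to a subgraph of minimum degree $\Omega(d)$. Then use a bipartite expander lemma of Koml\'os--Szemer\'edi type, or sublinear expansion, to obtain an expanding subgraph $G'$. In $G'$, small sets expand robustly, so vertex-disjoint paths of length $O(\log^{O(1)} n)$ can be routed while avoiding a forbidden set of size $o(d)$. Since $G$ is $C_{2k}$-free, $G'$ is $C_{2k}$-free as well. By Bondy--Simonovits, balls of radius $k-1$ around a vertex grow like $d^{i}$ and are locally almost tree-like. This is the input that replaces the $K_{s,t}$-freeness used in \cref{theorem:kst}, and it is why we get a linear, rather than quadratic, dependence on $h$.

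\textbf{Step 2: branch vertices.} Choose $h$ branch vertices $v_1,\dots,v_h$ that are pairwise far apart. For each $v_i$, fix an induced tree $T_i$ of depth about $k$ rooted at $v_i$. Because of the $C_{2k}$-free structure, the leaves of $T_i$ can be taken to form an induced set with about $h\cdot \mathrm{polylog}(h)$ leaves. Reserve $\deg_H(v_i)$ of these leaves as ports for the edges at $v_i$.

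\textbf{Step 3: connecting ports.} Process the edges $v_iv_j$ of $H$ one at a time. Each time, find a shortest path (hence an induced path) between the two designated ports in $G'$ minus $N[\text{everything already built}]$. The built set has size $h^2\mathrm{polylog}$ vertices, but its closed neighbourhood could be as large as $h^2 d$. This is far too large to delete naively, and it is the main obstacle.

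To get around it, I would not delete neighbourhoods wholesale. Instead, I would use random sampling: pass to a random induced subgraph, keeping each vertex with probability $p\approx 1/\mathrm{polylog}$, or work in a random partition of $V(G')$ into $h^2$ classes, one class per edge of $H$. Within each class we then seek paths avoiding only the neighbourhoods of earlier paths restricted to that class. The expected damage is controlled because each path has length $\mathrm{polylog}$ and hits few classes. Inducedness between different paths is then repaired by a final alteration. Shortest-path minimality gives inducedness within a path. Near-disjointness of neighbourhoods in $C_{2k}$-free graphs (codegree bounded by $O_k(1)$ on average, via Bondy--Simonovits counting of $2k$-cycles) implies that a short path in one class has neighbours in another class adjacent to only $\mathrm{polylog}$ vertices of the paths already built. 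So the cost per edge of $H$ is about $\mathrm{polylog}\cdot d/h$ vertices, and expansion tolerates this.

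The $\log^5 h$ factor should be the product of three sources of polylogarithmic loss: the expansion rate, the path length, and the union bounds.
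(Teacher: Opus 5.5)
\textbf{There is a genuine gap, and it is in Step~3.} The obstacle you identify is real for your set-up, but the remedy you propose does not work.

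In a random partition of $V(G')$ into about $h^2$ classes, a vertex has only about $d/h^2=O_k(\log^5 h/h)=o(1)$ neighbours in its own class. So the classes are essentially edgeless and contain no connecting paths. Sampling with $p\approx 1/\mathrm{polylog}$ does not change the ratio between the room around a port and the damage done by earlier paths. And ``inducedness between different paths is then repaired by a final alteration'' postpones exactly the difficulty of the problem without an argument. Single leaves as ports are also unusable: an expander cannot route into a fixed vertex all of whose at most $\Delta$ neighbours may lie in a forbidden set of size $\gg\Delta$.

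What you are missing is that the obstacle disappears once connections are launched from sets whose neighbourhoods have order $d^3$. The paper does delete the whole closed neighbourhood $F_q$ of all earlier paths. But each connection starts from a set $S_i\subseteq N^{(2)}(v_i)$ with $|S_i|=\Omega(d^2)$, every half of which has $\Omega(d^3/k^2)$ neighbours in $N^{(3)}(v_i)$. This comes via Lemma~\ref{lemma:no_long_path}, which is where $k\ge 3$ is used. The part of $F_q$ near the branch regions has size $O(h^2\Delta\log^3\Delta)$, and $d^3\gg h^2 d\,\mathrm{polylog}(d)$ is precisely the condition $d\gg h\,\mathrm{polylog}(h)$.

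Two further details make this work. The endpoint $x\in S_i$ is chosen only after routing. To keep the final step $x\to y\to v_i$ (with $y\in I_i\subseteq N(v_i)$) induced, the paper adds conflict edges inside each $S_i$ before invoking Lemma~\ref{lemma:collection_of_paths}. Bounding the resulting degrees requires first deleting high-codegree vertices, since $C_{2k}$-free graphs with $k\ge 3$ may contain $K_{2,t}$ for every $t$; so your codegree control holds only on average. The paper also starts from $2h$ candidate branch vertices and discards those whose $S_i$ becomes half used.

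\textbf{Two further ingredients are absent.} First, any bound of the form ``the closed neighbourhood of the built set has size $\lesssim h^2 d\,\mathrm{polylog}$'' needs $\Delta(G')=O(d)$. Neither minimum-degree cleaning nor Theorem~\ref{theorem:pass_to_expander} provides this. The paper gets it from Lemma~\ref{lemma:dichotomy}, at the cost of a factor $\log^2 K=O(\log^2 d)$; together with the $\log^3$ from the path-collection step, this is the $\log^5 h$.

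The alternative outcome of the dichotomy is $V(G)=A\cup B$ with $|A|\ge 8d^{200}|B|$, few edges inside $A$ and $\Omega(nd)$ edges across. There the $B$-vertices have average degree at least $d^{201}$, so forbidding the neighbourhoods of path vertices is ruinous. The paper handles this case by a completely different argument (Lemma~\ref{lemma:lopsided_case}). It finds independent sets $A'\subseteq A$, $B'\subseteq B$ with every $a\in A'$ having exactly two neighbours in $B'$, and applies Theorem~\ref{theorem:noninduced_subdivisions} to the auxiliary graph on $B'$. Your plan has nothing for this case.

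Second, paths in a sublinear expander have length $O(\log^3 n)$, and $n$ is not bounded in terms of $h$. So the built set need not have $h^2\,\mathrm{polylog}(h)$ vertices. The paper instead uses paths drifting away from $U=\bigcup_i B^{(2)}(v_i)$ (Lemma~\ref{lemma:drifting_away_paths}). Each such path has only $O((\ell+1)^3\log^3\Delta)$ vertices within distance $\ell$ of $U$, and that is all the expansion from the $S_i$ has to beat.
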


\noindent
Note that Theorem~\ref{theorem:C2k} can be strengthened if, instead of forbidding $C_{2k}$, one assumes that $G$ has large girth. In this case, Gir\~ao and Hunter \cite{GiraoHunter26} recently proved that every graph of minimum degree $\delta\geq 10^8$ and girth at least $10^8$ contains an induced subdivision of $K_{\delta+1}$. 

Let us now explain why the bounds from Theorems~\ref{theorem:kst} and~\ref{theorem:C2k} are nearly optimal. For $C_{2k}$-free graphs, the obstruction is essentially trivial: every subdivision of $K_h$ has $h$ branch vertices, and each branch vertex must have degree at least $h-1$ in the host graph. Consequently, average degree of order at least $h$ is a necessary condition for forcing an induced subdivision of $K_h$. Thus, Theorem~\ref{theorem:C2k} is optimal up to the factor of $\log^5 h$. 

Interestingly, our results do not fully explain what happens in $C_4$-free graphs. In this case, the upper bound (coming from Theorem~\ref{theorem:kst}) is $h^2(\log h)^{O(1)}$. However, the best lower-bound construction we have comes from a polarity graph over $\mathbb{F}_q^2$. Since this graph contains no independent sets much larger than $q^{3/2}$, it contains no induced subdivisions of $K_h$ for $h\gg q^{3/4}$. This shows that we must have $d\gg h^{4/3}$ in order to force an induced $K_h$-subdivision in a $C_4$-free graph (see Section~\ref{sec:concluding_remarks} for further comments).

For the $K_{s,t}$-free case, the near-optimality comes from a random construction. If $G\sim G(n, p)$ where $n= h^{2st/(s+t)-o(1)}$ and $p=n^{-1/s-1/t}$, a simple union bound implies that $G$ is $K_{s, t}$-free with good probability, since $\binom{n}{s}\binom{n}{t}p^{st}\leq \frac{n^s}{s!}\frac{n^t}{t!}n^{-s-t}\leq \frac{1}{s!t!}$. The crucial observation is that if $G$ contains an induced subdivision of $K_h$, then choosing one internal vertex on each path corresponding to a subdivided edge produces an independent set. Hence, since every set of $h$ vertices spans at most $\frac{1}{2}\binom{h}{2}$ edges, at least half of the edges in any $K_h$ must be subdivided and so $\alpha(G)\ge \binom{h}{2}/2$. On the other hand, $\alpha(G(n,p))=\tilde \Theta(\frac{1}{p})\ll h^2$, showing that this is impossible. The graph produced in this way has average degree of order $pn=n^{1-1/s-1/t}=h^{2(s-1)-O(s^2/t)}$, showing that the bound in Theorem~\ref{theorem:kst} is very close to best possible when $t$ is large. 

\medskip
\noindent
\textbf{Notation.} Now we establish basic notation and terminology which will be used throughout the paper.  The \textit{ball of radius $i$} around a vertex $u$ (or a set of vertices $U\subseteq V(G)$) is defined as the set of all vertices at distance at most $i$ from $u$ (or $U$), and it is denoted by $B_G^{(i)}(u)$ (or $B_G^{(i)}(U)$).
The first neighbourhood of a vertex $v$, denoted by $N_G(v)$, is the set of all vertices adjacent to it. Similarly, the $i$-th neighbourhood $N_G^{(i)}(v)$ is defined as $N_G^{(i)}(v)=B^{(i)}(v) \setminus B^{(i-1)}(v)$.
For two vertices $u, v\in V(G)$, we write $d_G(u, v)$ for the size of their common neighbourhood.
We might omit the subscript $G$ and write $N(v)$ for $N_G(v)$ etc., whenever the graph $G$ is clear from the context.
Further, we say that $H$ is a proper subdivision of $G$ if $H$ can be obtained from $G$ by replacing each edge by a path of length at least $2$. 
We let $P_k$ denote the path with $k$ edges, i.e., on $k+1$ vertices.
Finally, if $P, P'$ are paths, then $P^{-1}$ denotes the reverse of the path $P$, while $PP'$ denotes the concatenation of these two paths (which makes sense only if the endpoint of $P$ coincides with the starting point of $P'$).

\medskip
\noindent
\textbf{Organization of the paper.} We begin by giving an outline of our proofs in Section~\ref{sec:outline}. In Section~\ref{sec:irregular} we will prove a dichotomy statement, which shows that it suffices to deal with nearly-regular and highly irregular graphs separately. In the same section, we also address the case of highly irregular graphs. In Section~\ref{sec:nearly_regular} we then use sublinear expanders to find subdivisions in nearly-regular graphs. We then assemble the proofs of the main theorems in Section~\ref{sec:main_proofs}. Finally, in Section~\ref{sec:concluding_remarks} we discuss some further directions left open by this work.

\section{Proof outline}\label{sec:outline}
The proofs of Theorems~\ref{theorem:kst} and \ref{theorem:C2k} follow the same general strategy.
We therefore sketch the proof of Theorem~\ref{theorem:kst} and explain the necessary modifications for Theorem~\ref{theorem:C2k} at the end.
We fix $h$ and $t \geq s$ and let $G$ be a $K_{s, t}$-free graph with average degree $d = \Omega(h^{2(s-1)}\log^{7(s-1)} h)$.
Since the average degree is the relevant parameter throughout, we may assume that $G$ is $d$-degenerate and has minimum degree at least $d/2$.
Otherwise, we can pass to a denser subgraph of $G$ and show a stronger statement there.
We also note that it is sufficient to argue that we can find an induced proper subdivision of $K_h$, i.e., an induced subdivision where each edge of $K_h$ is subdivided at least once. 
Indeed, such a proper subdivision contains a subdivision of any graph $H$ on $h$ vertices as an induced subgraph.

Our argument proceeds in several largely independent steps.
First, we apply a structural dichotomy: either $G$ looks like a very unbalanced bipartite graph or $G$ contains a nearly-regular induced subgraph with almost the same average degree.
We treat these two cases separately.
In the former case, we reduce the problem to finding a not-necessarily induced subdivision in an auxiliary graph with average degree $\Omega(h^2)$.
In the latter case we first pass to an induced $K_{2, \log d}$-free subgraph with average degree $\Omega(h^2\log^3h)$, thereby reducing to the case of $s = 2$.
Then, we find an induced subdivision directly, using sublinear expansion, following an approach similar to the one of Koml\'os and Szemer\'edi \cite{KomlosSzemeredi96}, with several crucial modifications.\footnote{Upon reading the article \cite{KomlosSzemeredi96} in detail, we found what we believe to be a gap in their approach. Namely, we believe that in Section 3.3 of \cite{KomlosSzemeredi96}, the equation (12) cannot hold for all graphs $G$, and the error arises when the preceding paragraph claims $\lceil p/L\rceil \leq 2p/L$, which may not be the case if $L\gg p$ (which can happen for some graphs). This ultimately leads to the problem of exhausting the neighbourhoods of the branch vertices too quickly. We believe one can go around this issue by using our notion of drifting-away paths, introduced in Section~\ref{sec:nearly_regular} of this article (and probably in other ways too).} We now describe the main steps in more detail.

\medskip
\noindent\textbf{The Dichotomy:} We use the following dichotomy result. 
For every parameter $K \geq 1$ either there is a partition $V(G) = A \cup B$ with $|A| \geq K|B|$ such that a constant proportion of the edges of $G$ go between $A$ and $B$, or $G$ contains a nearly-regular induced subgraph with average degree $\Omega(d/\log^2K)$.
Here, nearly-regular means that the maximum degree is bounded by some absolute constant times its average degree.
We analyze the two cases separately.

\medskip
\noindent\textbf{The highly irregular case:} In the first case of the dichotomy, using that both $G[A]$ and $G[B]$ are $d$-degenerate, we will be able to find independent sets $A' \subseteq A$ and $B' \subseteq B$ such that $|N(a) \cap B'| = 2$ for each $a \in A'$ and such that the auxiliary graph $H$ on the vertex set $B'$ and an edge $N(a)\cap B'$ for each $a \in A'$ has average degree at least $\Omega(h^2)$.
By the classical result of Bollob\'as--Thomason and Koml\'os--Szemer\'edi, $H$ therefore contains a subdivision of $K_h$ --- and it is easy to see that this subdivision corresponds to an induced subdivision of $K_h$ in $G$.

\medskip
\noindent\textbf{Subsampling to get a $K_{2, \log d}$-free induced subgraph:} We now turn to the nearly-regular case.
Here, we first observe that while finding induced $C_4$-free subgraphs in $K_{s,t}$-free graphs is potentially difficult, finding $K_{2, \log d}$-free ones is relatively easy.
As the dependence on $t$ we get in the $K_{2, t}$-free case is only linear, this will be, up to a loss of one $\log h$, good enough for us.
To find such an induced subgraph we subsample the vertices of $G$ at $p = \Theta(1/(d^{1 - 1/(s-1)}\log d))$ and consider two types of $K_{2, \log d}$'s that can appear in this random induced subgraph of $G$ --- the ones with the two vertices on the smaller side have co-degree $\Omega(d^{1 - 1/(s-1)})$ in $G$ and the ones where this co-degree is $O(d^{1 - 1/(s-1)})$.
For the former case, by K\H{o}v\'ari-S\'os-Tur\'an the number of pairs in $G$ with such high co-degree is $O(nd^{1 - 1/(s-1)})$.
For the latter case, it is very unlikely that $\log d$ common neighbours survive the subsampling.
Therefore, after some cleaning, we will get a $K_{2, \log d}$-free induced subgraph.

\medskip
\noindent\textbf{Finding induced subdivisions using sublinear expansion:}
This step is the main difference of this paper to the previous works on finding induced subdivisions.
While previous authors reduced the problem to finding a not-necessarily induced subdivision of $K_h$ in an auxiliary graph in a similar way to our highly irregular case, here we will find an induced subdivision directly.
We first describe the argument in the case when $n$ is polynomial in $h$; the general case follows by a refinement of the same idea.

First, by losing a constant factor in the average degree, we can assume that $G$ is a sublinear expander.
In particular, for any vertex sets $U_1, U_2$ and any forbidden set $F$ which is sufficiently small compared to $U_1$ and $U_2$, there exists a path of length $O(\log^3n) = O(\log^3 h)$ between $U_1$ and $U_2$ avoiding $F$.

We select $v_1, \dots, v_h$ in $G$ such that the balls of radius $2$ around them are disjoint and form induced trees with $\Omega(d^2)$ leaves.
It is not the exact picture but is close enough to the truth for this sketch.
These vertices will serve as branch vertices of the desired subdivision.

We then connect the pairs $v_i, v_j$ one by one.
At each step, we maintain a forbidden set $F$ consisting of all vertices at distance at most one from the previously constructed paths.
Since each path has length $O(\log^3h)$ and $\Delta(G) = O(d)$, we have that $F$ has size at most $O(d \cdot h^2 \log^3 h)$.
Let $U_i$ and $U_j$ denote the second neighbourhoods of $v_i$ and $v_j$ in the graph $G - F$.
These sets remain large, of size $\Omega(d^2)$, and thus are still sufficiently large compared to $F$.
By sublinear expansion, there exists a short path between $U_i$ and $U_j$ avoiding $F$. By construction, after having connected all the pairs, the paths we find will yield an induced subdivision of $K_h$.

When $n$ is large, we change the argument slightly by ensuring that the connecting paths do not spend too much time near the branch vertices $v_1, \dots, v_h$; we refer to \cref{section:sublinear_expanders} for details.

\medskip
\noindent\textbf{Forbidding longer even cycles:}
The proof of Theorem~\ref{theorem:C2k} follows the same general approach. The main difference arises in the final step. 
In $C_{2k}$-free graphs, with $k \geq 3$, a ball of radius $3$ around a vertex is roughly an induced tree.
We therefore work with third instead of second neighbourhoods.
These have size $\Omega(d^3)$, while the forbidden set remains of size $O(d \cdot h^2\log^3 h)$, which is negligible compared to $\Omega(d^3)$.
So, we can carry out the same expansion-based construction and obtain an induced subdivision of $K_h$.

\section{Highly irregular graphs}\label{sec:irregular}
\subsection{Dichotomy}

The goal of this section is to state and prove our dichotomy lemma that will allow us to assume that our graph is either nearly-regular or looks like a very unbalanced bipartite graph. This result was communicated to the first author by Ant\'onio Gir\~ao and refines variants appearing already in \cite{GiraoHunter25, BBCD24}. 
Here, nearly-regular means that $\Delta(G) \leq C \cdot d(G)$ for some constant $C$.

\begin{lemma}[The dichotomy]\label{lemma:dichotomy}
    There exists an absolute constant $C > 0$ such that the following holds. For any $d \geq 1$, $K > 4$ and any $d$-degenerate graph $G$ on $n$ vertices with $d(G) \geq d$, we have one of the two alternatives
    \begin{enumerate}
        \item there is a partition $V(G) = A \cup B$ where $|A| \geq K|B|$, $e(G[A]) \leq nd/100$ and $e(G[A, B]) \geq nd/8$, \emph{or}
        \item there is an induced subgraph $H \subseteq G$ with $d(H) \geq d/ \left(2000\log^2 K\right)$ and $\Delta(H) \leq Cd(H)$.
    \end{enumerate}
\end{lemma}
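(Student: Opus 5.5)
The plan is to split $G$ by degree into dyadic classes and show that unless a very unbalanced bipartite-like structure carries a constant proportion of the edges, some class, or pair of classes, induces a nearly-regular subgraph. Two facts are used throughout. Since $G$ is $d$-degenerate and $d(G)\ge d$, we have $nd/2\le e(G)\le nd$. Every induced subgraph $G[X]$ satisfies $e(G[X])\le d|X|$. I would also use the degree-sum bound: the number of vertices of degree at least $\lambda d$ is at most $2n/\lambda$.

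\textbf{Step 1 (the split).} Let $B$ be the set of vertices of degree at least $cKd$, for a suitable absolute constant $c$, and let $A=V(G)\setminus B$. Then $|B|\le 2n/(cK)$, so $|A|\ge K|B|$. By degeneracy, $e(G[B])\le d|B|\le 2nd/(cK)$, which is negligible. If $e(G[A,B])\ge nd/8$ and $e(G[A])\le nd/100$, alternative 1 holds and we are done. Otherwise one of the following holds:
\begin{enumerate}[label=(\alph*)]
\item $e(G[A,B])<nd/8$. Then $e(G[A])\ge nd/2-nd/8-2nd/(cK)\ge nd/4$.
\item $e(G[A])>nd/100$.
\end{enumerate}
In both cases $G[A]$ has average degree at least $d/50$ and maximum degree less than $cKd$. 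It therefore suffices to prove the following claim, applied with $d'=d/50$ and $\Lambda=50cK$.

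\textbf{Claim.} If a $d$-degenerate graph $G'$ has $d(G')\ge d'$ with $d'\ge d/50$, and $\Delta(G')\le \Lambda d'$, then $G'$ has an induced subgraph $H$ with $d(H)\ge d'/(C'\log^2\Lambda)$ and $\Delta(H)\le C\,d(H)$.

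\textbf{Step 2 (proving the Claim).} First discard vertices of degree below $d'/4$; this keeps at least half of the edges. Partition the remaining vertices into $O(\log\Lambda)$ classes $V_i=\{v:\deg(v)\in[2^{i}d'/4,\,2^{i+1}d'/4)\}$. By pigeonhole over the $O(\log^2\Lambda)$ pairs $(i,j)$ with $i\le j$, some pair carries $e'\ge e(G')/(C''\log^2\Lambda)$ edges.
\begin{itemize}
\item If $i=j$, take $H=G'[V_i]$. Its maximum degree is below $2^{i+1}d'/4$. Its average degree is at least $2e'/|V_i|$, and $|V_i|\,2^{i}d'/4\le 2e(G')$ by the degree-sum bound. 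Hence $d(H)\gtrsim 2^{i}d'/\log^2\Lambda$, which is within the allowed loss.
\item If $i<j$, try $H=G'[V_i\cup V_j]$. Its maximum degree is at most $2^{j+1}d'/4$. Its average degree is at least $2e'/(|V_i|+|V_j|)$. This is fine as long as $|V_i|$ is not much larger than $|V_j|\,2^{j-i}$.
\end{itemize}

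\textbf{Step 3 (main obstacle: a lopsided pair).} The hard case is a pair with $i<j$ and $|V_i|\gg |V_j|$. This is once again an unbalanced bipartite configuration, but now inside $G[A]$ rather than in $G$, so it does not directly give alternative 1. My first attempt would be to equalize degrees by random subsampling. Keep each vertex of $V_i$ independently with probability $q=2^{i-j}$, so that the degrees of $V_j$-vertices into the kept part of $V_i$ drop to about $2^{i}d'$. Then delete the few vertices whose degree deviates by more than a constant factor. One must check that degrees of $V_i$-vertices into $V_j$ are not destroyed and that the edge density survives up to constants. If the loss per round is only a constant factor, then, since $j-i\le\log\Lambda$, the total loss stays within the $\log^2 K$ budget.

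If subsampling fails, I would instead choose the thresholds adaptively. Run over dyadic scales $\lambda=2^m$ with $m\le\log K$, and at each scale repeat the Step 1 split with $B_\lambda=\{\deg\ge\lambda d\}$. By pigeonhole, either some scale gives a cut that qualifies for alternative 1, or some scale band contributes an edge proportion of order $1/\log K$ that is balanced at both ends and hence nearly regular. The second factor of $\log K$ in the bound is paid for choosing the second endpoint of the band.
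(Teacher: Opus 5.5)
\textbf{Gap: the proposal never proves $\Delta(H)\le C\,d(H)$ with $C$ absolute.} Your Step 1 matches the paper's first split, but your Claim is never established, and it already fails in the case you call easy. Pigeonholing over the $O(\log^2\Lambda)$ pairs gives only $e(G'[V_i])\ge e(G')/(C''\log^2\Lambda)$, hence only $d(H)\gtrsim 2^i d'/\log^2\Lambda$. Meanwhile $\Delta(H)$ can really be of order $2^i d'$: a few vertices of $V_i$ may have almost all their neighbours inside $V_i$, while a typical vertex of $V_i$ has only a $1/\log^2\Lambda$ fraction there. So you get $\Delta(H)\le O(\log^2\Lambda)\,d(H)$, not $\Delta(H)\le C\,d(H)$. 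Calling this ``within the allowed loss'' conflates two things: the permitted loss in $d(H)$ relative to $d'$, and near-regularity, which needs $d(H)$ within a constant factor of $\Delta(H)$. The same objection applies to balanced pairs $i<j$.

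The lopsided case is left open. Sampling $V_i$ does nothing about $V_j$-vertices whose degree is concentrated inside $V_j$. Also, a constant loss per round over up to $\log\Lambda$ rounds compounds to $\Lambda^{O(1)}$, not a polylogarithmic factor. The adaptive-threshold fallback fails too: a cut at threshold $\lambda d$ with $\lambda\ll K$ cannot give $|A|\ge K|B|$, and a band carrying a $1/\log K$ fraction of the edges is again only $O(\log K)$-regular.

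\textbf{Missing idea: iteration with the right per-round trade-off.} The paper proves a one-round lemma: if $d(H)\ge d$ and $\Delta(H)\le Ld$, then some induced $H'$ has $d(H')\ge d/(200\log L)$ and $\Delta(H')\le 5600\log L\cdot d(H')$. Apply it with $L_0=200K$ and $L_{i+1}=5600\log L_i$ until $L_\ell$ falls below an absolute constant. The total loss is then $\prod_i 200\log L_i\le \log^2 L_0$. This works only because each round loses a single $\log L$. A round that pigeonholes over pairs of classes loses $\log^2$, and iterating it would compound to $\log^2\Lambda\,(\log\log\Lambda)^2\cdots$, beyond the $2000\log^2 K$ budget.

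The paper gets a single-log round as follows:
\begin{enumerate}
\item Pass to a $d$-degenerate subgraph with $\delta\ge d/2$ and take a max cut $A\cup B$ with $|A|\ge|B|$.
\item Keep only the vertices of $A$ of degree at most $10d$; by degeneracy this is at least half of $A$. That side is then regular up to a constant for free.
\item Pigeonhole dyadically only over the degrees of $B$-vertices into this set.
\item Prune $B$-vertices with more than $10d$ neighbours in their own class, again using degeneracy.
\item Balance the two sides: sample the larger side with probability equal to the ratio of the part sizes, and delete vertices of the smaller side whose sampled degree exceeds about twice its expectation (Markov).
\end{enumerate}
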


The main ingredient in the proof will be the following lemma, which says that every nearly-regular graph $H$ contains an induced subgraph which is nearly-regular with better parameters and whose average degree is not much smaller than the average degree of $H$. We will repeatedly use it starting with a graph with some (bad) bound on the maximum degree until we find a nearly-regular subgraph $G'$ of it with $\Delta(G') \leq Cd(G')$ for some constant $C$.

\begin{lemma}\label{lemma:getting_more_regular}
Let $L \geq e^{20}$ and let $H$ be a graph with $d(H) \geq d$ and $\Delta(H) \leq Ld$. Then, there exists an induced $H' \subseteq H$ with $d(H') \geq d / \left(200\log L\right)$ and $\Delta(H') \leq 5600\log L \cdot d(H')$.
\end{lemma}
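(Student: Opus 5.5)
We bucket vertices by degree on a dyadic scale and select one bucket pair that carries many edges. Set $m = \lceil \log_2 L\rceil + 1$. Partition $V(H)$ into classes $V_i = \{v : 2^{i-1}d < \deg_H(v) \le 2^{i}d\}$ for $1\le i\le m$, with $V_0$ the vertices of degree at most $d$. Since $\Delta(H)\le Ld$, this gives at most $m+1 = O(\log L)$ classes.

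The first step is to discard the low-degree class. Vertices in $V_0$ have degree at most $d$, which is below the average, so by the standard argument of repeatedly deleting vertices of degree less than $d/4$ we may first pass to an induced subgraph $H_1$ with minimum degree at least $d/4$ and average degree at least $d$ ($e(H_1)\ge e(H)-|V(H)|d/4$ shows $H_1$ is nonempty). All degrees in $H_1$ then lie in $[d/4, Ld]$, and we redo the dyadic partition $V_0,\dots,V_m$ of $V(H_1)$ with respect to $H_1$-degrees, starting the bottom class at $d/4$; there are still $O(\log L)$ classes. By pigeonhole there is a pair $(i,j)$ with $i\le j$ and $e(V_i,V_j)\ge e(H_1)/\binom{m+2}{2}$. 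The quadratic loss here is too large, since we may only lose $\log L$. So instead, for each $i$ we count the edges whose higher-degree endpoint lies in $V_i$ and pick the $i$ carrying at least $e(H_1)/(m+1)$ of these. Let $X=V_i$, where all degrees lie in $(D/2, D]$ with $D=2^i d$ (up to the factor $4$ at the bottom), and let $Y=\bigcup_{j\le i}V_j$. The edges with higher endpoint in $X$ all lie in $H_1[X\cup Y]$, and every vertex of $X\cup Y$ has $H_1$-degree at most $D$. So $H'=H_1[X\cup Y]$ has $\Delta(H')\le D$ and $e(H')\ge e(H_1)/(m+1)$.

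It remains to compare $d(H')$ with $D$. On one side, $e(H')\ge e(H_1)/(m+1)\ge |V(H_1)|d/(2(m+1))$. On the other, the number of edges counted at $X$ is at most $|X|D$, and also at least about $|X|D/2\cdot$(fraction). The difficulty is that $|Y|$ may be much larger than $|X|$, so $d(H')=2e(H')/(|X|+|Y|)$ could be small relative to $D$. I would handle this with a second pigeonhole, weighting each $i$ by $|V_i| 2^i$, since $\sum_i |V_i|2^i d\approx 2e(H_1)$. Concretely, I would choose $i$ maximizing $\sum_{j\le i}|V_j|2^j$ relative to $2^i\sum_{j\le i}|V_j|$ under a threshold. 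The cleanest choice is to take the largest $i$ such that $\sum_{j\le i}|V_j|2^jd \ge \frac{1}{2(m+1)}\,2^i d\sum_{j\le i}|V_j|$. Such an index exists because the sum of edge contributions over all classes is large. With this choice, $d(H')\gtrsim D/\log L$, while $d(H')\ge d/(200\log L)$ follows from $D\ge d$ together with the edge count.

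The main obstacle is that degrees in $H'$ differ from degrees in $H_1$: vertices in $Y$ lose neighbours outside $X\cup Y$. But we only need an upper bound on $\Delta(H')$, and deletion only lowers degrees, so that direction is harmless. The real work is the averaging that guarantees simultaneously $d(H')\ge d/(200\log L)$ and $\Delta(H')\le D\le 5600\log L\cdot d(H')$. I would first try to get both from the single inequality $e(H')\ge |V(H')|\,D/(c\log L)$, which already implies $d(H')\ge D/(c\log L)$ and $D\ge d$. To establish it, I would run a potential argument: if for every $i$ the prefix $V_{\le i}$ fails this inequality, summing these failures over $i$ with geometric weights $2^{-i}$ contradicts $e(H_1)\ge |V(H_1)|d/2$. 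Absorbing the constants ($L\ge e^{20}$ lets $m+2\le 2\log L$) then gives the stated $200$ and $5600$.
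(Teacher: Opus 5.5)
There is a genuine gap. Every subgraph your construction can output is a degree prefix $H_1[V_{\le i}]$, i.e.\ all vertices whose $H_1$-degree is at most $2^i d$. No such prefix need satisfy the conclusion.

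Take $H=K_{a,La}$. Then $d(H)=2La/(L+1)\in[a,2a)$, $\Delta(H)=La\le L\,d(H)$ and $\delta(H)=a>d(H)/2$. So $H_1=H$, and the degree classes are just the two sides. The only nonempty prefixes are the large side, which is independent, and the whole graph, for which $\Delta/d=(L+1)/2$. This is far more than $5600\log L$ when $L\ge e^{20}$. Hence every prefix violates $e(H')\ge |V(H')|D/(c\log L)$, while $e(H_1)=|V(H_1)|d(H_1)/2$ still holds, so the weighted summation you sketch has nothing to contradict.

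Your concrete selection rule has a further flaw. The quantity $\sum_{j\le i}|V_j|2^jd$ approximates the $H_1$-degree sum of $V_{\le i}$, which counts edges leaving the prefix rather than edges inside it. In this example the rule selects the independent large side.

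The lemma is still true for $K_{a,La}$: keep only $a$ vertices of the large side to get $K_{a,a}$. But this subgraph is not cut out by a degree threshold. To lower the degrees of the high-degree vertices you must delete a suitable proportion of their low-degree neighbours, and that is the idea missing from your plan. The paper does it as follows:
\begin{enumerate}
\item Pass to a $d$-degenerate subgraph with $\delta\ge d/2$.
\item Take a maximum cut $A\cup B$ with $|A|\ge|B|$, and keep $A'=\{a\in A: d(a)\le 10d\}$.
\item Bucket $B$ dyadically by degree into $A'$, and keep one bucket $B_i$ carrying a $1/O(\log L)$ fraction of the edges. This is the only place a $\log L$ is lost.
\item Discard the few vertices of $B_i$ with more than $10d$ neighbours inside $B_i$, leaving $B'$.
\item The bipartite graph $H[A',B']$ now has both sides nearly regular, but possibly at very different degrees. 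Lemma~\ref{lemma:subsampling_biregular} balances them: sample the larger side at rate $p$ equal to the ratio of the smaller to the larger part size, and drop every vertex $b$ of the other side with more than $1+2p(d(b)-1)$ sampled neighbours. By Markov, each edge survives with probability at least $p/2$.
\item Vertices of $A'$ have total degree at most $10d$, and vertices of $B'$ have at most $10d$ neighbours inside $B_i$. So passing to the induced subgraph on the sampled sets adds at most $10d$ to each degree, which gives $\Delta(H')\le 5600\log L\cdot d(H')$.
\end{enumerate}
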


Let us show how the dichotomy lemma follows from Lemma~\ref{lemma:getting_more_regular}. We begin by dividing the vertices into two sets, $A_{\geq 4K}$ and $A_{<4K}$, based on whether their degree is larger or smaller than $4Kd(G)$.
Then, either there is a lot of edges between $A_{\geq 4K}$ and $A_{<4K}$, in which case we get our partition for the first case, or there is a lot of edges inside $A_{<4K}$.
In the latter case, we gain a reasonable bound on the maximum degree of any vertex in $A_{<4K}$ -- so we can iteratively apply Lemma~\ref{lemma:getting_more_regular} to find an induced nearly-regular subgraph.

\begin{proof}[Proof of Lemma~\ref{lemma:dichotomy} assuming Lemma~\ref{lemma:getting_more_regular}]
    Let $A_{\geq 4K} = \{v \in V(G): d(v) \geq 4K d\}$ and $A_{<4K} = V(G) \setminus A_{\geq 4K}$.
    Note that since $G$ is $d$-degenerate we have $e(G) \leq nd$ and therefore $|A_{\geq 4K}| \leq n/(2K)$.
    Moreover, $e(G[A_{\geq 4K}]) \leq |A_{\geq 4K}|d \leq nd/8$.
    If $e(G[A_{<4K}]) < nd/100$, then by taking $A = A_{<4K}$ and $B = A_{\geq 4K}$ we immediately get the first case.

    If $e(G[A_{<4K}]) \geq nd/100$ we want to show that the second point holds.
    Let $G_0 = G[A_{<4K}]$ and note that $d(G_0) \geq d/50$.
    By definition we have $d_{G_0}(v) \leq 200Kd(G_0)$ for each $v \in V(G_0)$.

    We now want to iteratively apply Lemma~\ref{lemma:getting_more_regular} to find our induced $H \subseteq G_0$ --- for that we first set up the relevant parameters. 
    Let $C > e^{20}$ be an (absolute) constant such that $200\log C \cdot \log^2(5600\log C) \leq \log^2 C$ and note that this inequality then holds for all $C' \geq C$.
    This will be the threshold for $L$ at which we stop the process.
    We let $L_0 = 200K$ and for each $i \geq 1$ let $L_i = 5600 \log L_{i-1}$.
    Similarly, let $d_0 = d/50$ and for each $i \geq 1$ write $d_i = d_{i-1} / (200 \log L_{i-1})$.
    Let $\ell$ be the smallest integer such that $L_{\ell} \leq C$, which exists since $5600 \log L' \leq L'/2$ for each $L' \geq C$.
    By repeatedly using Lemma~\ref{lemma:getting_more_regular}, starting with $G_0$, we can for each $i \in [\ell]$ find an induced subgraph $G[W] \subseteq G$ with $d(G[W]) \geq d_i$ and $\Delta(G[W]) \leq L_id_i$.

    We now argue by induction that for each $i =\ell-1, \dots, 0$ we have $d_{\ell} \geq d_i / \log^2L_i$.
    First, note that $d_{\ell} \geq d_{\ell -1} / 200\log L_{\ell - 1} \geq d_{\ell - 1} / \log^2 L_{\ell -1}$, since $L_{\ell - 1} \geq C$.
    Suppose then that the bound holds for some $i \in [\ell]$.
    By induction we get 
    \[
        d_\ell \geq \frac{d_{i}}{\log^2L_i} = \frac{d_{i-1}}{200\log L_{i-1} \cdot \log^2(5600 \log L_{i-1})} \geq \frac{d_{i-1}}{\log^2 L_{i-1}}, 
    \]
    where we again used that $L_{i-1} \geq C$.
    
    In particular, we therefore get that $d(G_{\ell}) \geq d_0 / \log^2 L_0 \geq d/2000\log^2 K$.
    By construction, $G_{\ell}$ is an induced $C$-nearly-regular subgraph of $G$.
    Hence, the second case of the lemma holds.
\end{proof}

It therefore remains to prove Lemma~\ref{lemma:getting_more_regular} --- to do that we need one additional ingredient.
Namely, in the proof of Lemma~\ref{lemma:getting_more_regular} we will encounter a bipartite graph $\Gamma$ with parts $A$ and $B$ such that all vertices in $A$ have roughly the same degree and all vertices in $B$ have roughly the same degree --- the degrees in the two parts might however be different.
The following lemma says that we can subsample one of the parts and make all vertices have roughly the same degree.
\begin{lemma}\label{lemma:subsampling_biregular}
    Let $L \geq 1$ and let $\Gamma = (A, B; E)$ be a bipartite graph with $d(a) \leq L \cdot |E| / |A|$ for each $a \in A$ and $d(b) \leq L \cdot |E|/ |B|$ for each $b\in B$. 
    Then, $\Gamma$ has an induced subgraph $\Gamma'$ with $d(\Gamma') \geq d(\Gamma)/4$ and $\Delta(\Gamma') \leq 12Ld(\Gamma')$.
\end{lemma}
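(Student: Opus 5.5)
The plan is to subsample the \emph{larger} side of $\Gamma$ down to the size of the smaller side. After that, the degrees on both sides are controlled by the same quantity, which is essentially the average degree. By symmetry assume $|A| \geq |B|$. Write $e = |E|$, $d_A = e/|A|$ and $d_B = e/|B|$, so that $d_A \leq d_B$. Since $d(\Gamma) = 2e/(|A|+|B|)$, we have $d_A \leq d(\Gamma) \leq 2d_A$.

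The hypotheses give $d(a) \leq L d_A$ for every $a \in A$, which is already of the right order. The only problem is that vertices of $B$ may have degree up to $L d_B$, which can be much larger than $d(\Gamma)$. To fix this, I would keep each vertex of $A$ independently with probability $q = |B|/|A| \leq 1$, call the kept set $A'$, and keep all of $B$. Then:
\begin{itemize}
\item $\mathbb{E}|A'| = |B|$;
\item $\mathbb{E}\, e(A',B) = qe = e|B|/|A|$;
\item each $b \in B$ has $\mathbb{E}\deg_{A'}(b) = q\, d(b) \leq qLd_B = Ld_A$;
\item vertices of $A'$ keep their degrees, which are at most $Ld_A$.
\end{itemize}
So, in expectation, the subsampled graph has about $2|B|$ vertices and $qe$ edges. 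Its average degree is therefore about $qe/|B| = d_A \geq d(\Gamma)/2$, and its typical maximum degree is about $Ld_A$.

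Next I would clean up the high-degree vertices of $B$: delete every $b \in B$ with $\deg_{A'}(b) > kLd_A$, for a suitable constant $k$ (something like $k = 6$). Set $\mu_b = q\,d(b) \leq Ld_A$. The expected number of edges lost at $b$ is
\[
\mathbb{E}\bigl[\deg_{A'}(b)\,\mathbf{1}[\deg_{A'}(b) > kLd_A]\bigr] \leq \frac{\mathbb{E}[\deg_{A'}(b)^2]}{kLd_A} \leq \frac{\mu_b^2 + \mu_b}{kLd_A} \leq \frac{\mu_b(Ld_A + 1)}{kLd_A}.
\]
If $Ld_A \geq 1$, then $(Ld_A+1)/(kLd_A) \leq 2/k$, and summing over $b$ shows that at most a $2/k$ fraction of the expected $qe$ edges is lost. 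If $Ld_A < 1$, I would instead threshold at degree $1$ in the same way. This makes sense: in that regime $d(\Gamma) \leq 2d_A < 2$, so a graph of maximum degree $1$ with average degree at least $d(\Gamma)/4$ is what we need.

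To make everything hold simultaneously, I would consider the random variable
\[
Z = e(\Gamma') - \tfrac{qe}{2|B|}\,|A'|,
\]
where $\Gamma'$ is the cleaned graph. Its expectation is at least $(1 - 2/k - 1/2)\,qe > qe/4$, so there is an outcome with $Z \geq qe/4$. In that outcome $e(\Gamma') \geq qe/4$ and $|A'| \leq 2|B| \cdot e(\Gamma')/(qe)$.

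Bounding the average degree is where I expect the constant bookkeeping to be the main obstacle. The inequality $Z \geq qe/4$ lower-bounds $e(\Gamma')$ and couples $|A'|$ to it, but we also need to control the total vertex count $|V(\Gamma')| = |A'| + |B'|$, where $B' \subseteq B$ is the surviving part of $B$. I would first try to handle $|B'|$ by passing to the non-isolated vertices of $\Gamma'$. If $|B|$ dominates the vertex count, I would tune the weight on $|A'|$ in $Z$ (and add a weight on $|B'|$ if needed). The aim is to get both $e(\Gamma') \geq c\,qe$ and $|V(\Gamma')| \leq c'|B|$, so that
\[
d(\Gamma') \geq \frac{2c\,qe}{c'|B|} = \frac{2c}{c'}\, d_A \geq \frac{c}{c'}\, d(\Gamma).
\]
The target is $c/c' \geq 1/4$. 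Granted that, every vertex of $\Gamma'$ has degree at most $kLd_A \leq 2kL\,d(\Gamma') \cdot$ (constant), which gives $\Delta(\Gamma') \leq 12L\,d(\Gamma')$ once $k$ and the weights are chosen appropriately. If the constants come out slightly off, I would adjust $k$ and the coefficient in $Z$ rather than change the strategy.
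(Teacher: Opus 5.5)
Your strategy is the paper's: subsample the larger side $A$ at rate $q=|B|/|A|$, delete the overloaded vertices of $B$, and average a linear combination of $e(\Gamma')$ and the vertex count. The difference is the pruning rule.
\begin{itemize}
\item The paper deletes $b$ when $|N(b)\cap A'|>1+2q(d(b)-1)$. Conditioning on $a\in A'$ and applying Markov to the other neighbours of $b$ shows that every edge $ab$ survives with probability at least $q/2$. This avoids any variance computation and gives maximum degree at most $1+2Ld_A\le 3Ld_A$.
\item Your uniform threshold $kLd_A$ with the second-moment tail bound is also correct. It loses only a $2/k$ fraction of the expected $qe$ edges, at the price of a larger maximum degree $kLd_A$.
\end{itemize}
Your separate case $Ld_A<1$ is vacuous: if $E\neq\varnothing$ then $Ld_A\ge\max_a d(a)\ge 1$.

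However, you did not finish the argument, and the $Z$ you wrote down does not give the stated constants. With $k=6$ you only get $\mathbb{E}Z\ge(1-\tfrac13-\tfrac12)qe=qe/6$, not more than $qe/4$. Even taking $k>8$ so that $\mathbb{E}Z\ge qe/4$, the bounds $e(\Gamma')\ge qe/4$, $|A'|\le 2e(\Gamma')/d_A$ and $|B'|\le |B|$ only yield $d(\Gamma')\ge d_A/3$. That can be as small as about $d(\Gamma)/6$, and you would also have $\Delta(\Gamma')\le kLd_A$ with $k>8$, so both target constants are missed.

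Your worry about $|B'|$ is unfounded, because $|B'|\le|B|=\mathbb{E}|A'|$ holds deterministically. Just put the weight on the whole vertex set. With $k=6$ and $X=e(\Gamma')-\tfrac{d_A}{4}\bigl(|A'|+|B'|\bigr)$, using $qe=|B|d_A$, you get
\[
\mathbb{E}X\ \ge\ \tfrac23|B|d_A-\tfrac{d_A}{4}\cdot 2|B|\ =\ \tfrac16|B|d_A\ >\ 0 .
\]
So some outcome has $e(\Gamma')>0$ and $d(\Gamma')>d_A/2\ge d(\Gamma)/4$. In that outcome $\Delta(\Gamma')\le 6Ld_A<12L\,d(\Gamma')$, and this completes your proof.
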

\begin{proof}
    We can without loss of generality suppose that $|A| \geq |B|$ and that $|E| > 0$.
    Write $d = |E|/|A|\ge d(\Gamma)/2$ and set $p = |B| / |A|$. 
    Let $A' \subseteq A$ be a random subset of $A$ where each vertex is included independently with probability $p$.
    Moreover, write $B' = \{ b\in B: |N(b)\cap A'| \leq 1 + 2p(d(b) - 1)\}$.
    We take $\Gamma' = \Gamma[A', B']$ and show that it satisfies the desired properties with positive probability.

    To that end, we first note that by construction we have $\Delta(\Gamma') \leq 1 + 2L\frac{|E|}{|A|} \leq 3L\frac{|E|}{|A|} \leq 3L d(\Gamma)$.
    Indeed, for each $a \in A'$ we have $d_{\Gamma'}(a) \leq d_{\Gamma}(a) \leq L\frac{|E|}{|A|}$ and for each $b \in B'$ we have $d_{\Gamma'}(b) \leq 1+ 2pL\frac{|E|}{|B|}$ by definition of $B'$.
    What remains to show is therefore that $d(\Gamma') \geq d(\Gamma)/4$ with positive probability.

    For that, let us show that the probability an edge $e = ab \in E$ is included in $\Gamma'$ is at least $p/2$. To see this, recall that $\Pr[a\in A']=p$. Given that $a\in A'$, the vertex $b$ is included in $B'$ if and only if at most $2p(d(b)-1)$ vertices from the set $N_{\Gamma}(b) \setminus \{a\}$ are included in $A'$ (and these decisions are independent from whether $a\in A'$). If we denote by $X=\big|(N_{\Gamma}(b) \setminus \{a\})\cap A'\big|$ the random variable which counts such vertices, then $\bE[X]=p(d(b)-1)$ and by Markov's inequality we get that
    \[\Pr\big[b \in B'| a \in A'\big] = \Pr\big[X \leq 2\mathbb{E}[X]\big] \geq 1/2.\]
    Hence, $\Pr[e\in E(\Gamma')]=\Pr[a\in A']\Pr\big[b \in B'| a \in A'\big]\geq p/2$
    Consequently, $\mathbb{E}[e(\Gamma')] = \sum_{e \in E} \Pr[e\in E(\Gamma')] \geq p|E|/2$.
    Therefore
    \[
        \mathbb{E}\Big[4e(\Gamma') - \frac{|E|}{|A|} (|A'| + |B'|)\Big] \geq 4\frac{p|E|}{2} - 2\frac{|E|}{|A|}|B| = 0,
    \]
    where we used that $\mathbb{E}\big[|A'| + |B'|\big] \leq p|A| + |B| = 2|B|$.
    In particular, there must exist a choice of $A', B'$ such that for the corresponding $\Gamma'$ we get
    \[
        d(\Gamma') = \frac{2e(\Gamma')}{|A'| + |B'|}\geq \frac{|E|}{2|A|} \ge d(\Gamma)/4,
    \]
    as desired. From this we deduce that $\Delta(\Gamma')\leq 3L d(\Gamma)\leq 12Ld(\Gamma')$.
\end{proof}

With this in hand we are ready to conclude the dichotomy section with the proof of Lemma~\ref{lemma:getting_more_regular}.
\begin{proof}[Proof of Lemma~\ref{lemma:getting_more_regular}]
    We may assume that $H$ is $d$-degenerate and that $\delta(H) \geq d/2$.
    Otherwise we can pass to an induced subgraph $H'$ of $H$ which maximizes the average degree, which is $d(H')$-degenerate and has $\delta(H') \geq d(H')/2$, and show the statement for $H'$.
    
    Let $V(H) =  A \cup B$ be a partition maximizing $e(H[A, B])$ and assume that $|A| \geq |B|$.
    Let $A' =\{ a \in A : d(a) \leq 10d \}$ and note that $|A'| \geq |A|/2$ --- otherwise we would get that $d(H) \geq \frac{1}{2|A|}10d |A\setminus A'|  > 2d$, a contradiction to $H$ being $d$-degenerate.
    We also note that by our choice of the partition we have $e(H[A', B]) \geq \sum_{a\in A'}d(a)/2\ge |A'| d/4$, where we used that $d(a)\geq d/2$ for all $a\in V(H)$.

    We now want to find $B' \subseteq B$ such that all vertices in $B'$ have roughly the same degree into $A'$ and the average degree in $H[A', B']$ does not decrease by much compared to $H[A', B]$.
    To that end, write $d_B = |A'|d / (4|B|) \geq d/8$ and for each $i \in [\lceil \log L\rceil + 5]$ let $B_i = \{ b \in B: 2^{i-2}\cdot d_B \leq d(b, A') < 2^{i-1} \cdot d_B \}$.
    Moreover, let $B^{-} =\{ b\in B: d(b, A') < d_B /2 \}$ and notice that since $d(b) \leq Ld \leq 8Ld_B$ for every $b \in B$ by assumption, $B^-, B_1, \dots, B_{\lceil \log L\rceil + 5}$ forms a partition of $B$. 
    Since $e(A', B^{-}) \leq |B| \cdot d_B/2 \leq e(A', B)/2$, by Pigeonhole principle there exists an $i \in [\lceil \log L\rceil + 5]$ such that $e(A', B_i) \geq e(A', B)/\left(2(\lceil \log L\rceil +5)\right) \geq |A'|d/\left(15\log L\right)$.
    Fix this index $i$, let $B' = \{ b \in B_i : d(b, B_i) \leq 10 d\}$.
    Since $H$ is $d$-degenerate we must have $|B_i \setminus B'| \leq |B_i| / 5$ and so $e(A', B_i\backslash B')\leq 2^{i-1}d_B\cdot  |B_i\backslash B'|\leq 2^{i-2}d_B|B_i|\cdot \frac{2}{5}\leq \frac{2}{5}e(A', B_i)$. Hence,
    \[
        e(A', B') \geq e(A', B_i)-e(A', B_i\backslash B')  \geq e(A', B_i) - \frac{2}{5}e(A', B_i) \geq \frac{|A'|d}{30\log L}.
    \]

    We have thus found sets $A', B'$ such that all vertices in $A'$ have roughly the same degree and all vertices in $B'$ have roughly the same degree -- we would now like to make sure that this degree is roughly the same for both sides.
    To that end, let $\Gamma = H[A', B']$ be the induced bipartite graph between $A'$ and $B'$.
    We have $d(\Gamma) = 2e(A', B')/(|A'| + |B'|) \geq d/50\log L$,
    where we used that $|B'| \leq |B| \leq 2|A'|$.
    Moreover, note that we have $d_{\Gamma}(a) \leq 10d \leq 300\log L \cdot e(\Gamma) / |A'| $ for each $a \in A'$ and by the choice of $B' \subseteq B_i$ we must have $d_{\Gamma}(b) \leq 2e(\Gamma)/|B'|$ for each $b \in B'$.
    Therefore, we can apply Lemma~\ref{lemma:subsampling_biregular} with the parameter $300 \log L$ to find $A'' \subseteq A'$ and $B'' \subseteq B'$ such that for $\Gamma' = H[A'', B'']$ we have $d(\Gamma') \geq d(\Gamma)/4 \geq d/200\log L$ and $d_{\Gamma'} (v) \leq 3600 \log L \cdot d(\Gamma')$. 

    Finally, we want to argue that taking $H' = H[A'' \cup B'']$ satisfies the conditions of the lemma.
    Indeed, we have $d(H') \geq d(\Gamma') \geq d/200\log L$ and by the earlier choice of $A', B'$ we have $d_{H'}(v) \leq d_{\Gamma'}(v) + 10d  \leq 5600\log L \cdot  d(H')$ for each $v \in A'' \cup B''$.  
\end{proof}

\subsection{Finding induced subdivisions in highly irregular graphs}
In this section, we want to argue that a highly irregular graph $G$ --- i.e. a graph for which the first case of Lemma~\ref{lemma:dichotomy} holds --- contains induced subdivisions. 
Our general approach will be to define a suitable auxiliary graph $H$ such that finding an induced subdivision in $G$ reduces to finding a not-necessarily induced subdivision in $H$.
For the latter part, we will then invoke the classical result of Bollob\'as and Thomason, and independently of Koml\'os and Szemer\'edi.

\begin{theorem}[\cite{BollobasThomason98, KomlosSzemeredi96}]\label[theorem]{theorem:noninduced_subdivisions}
    Let $h \in \mathbb{N}$ and let $H$ be a graph with average degree at least $10^4h^2$.
    Then $H$ contains a subdivision of $K_h$.
\end{theorem}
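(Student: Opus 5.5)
The plan is the linkage route of Bollob\'as and Thomason: pass to a highly connected subgraph, fix branch vertices with private neighbours, and join those neighbours in pairs with one application of a linkage theorem. Set $k=\binom{h}{2}$. I will use two classical facts.
\begin{enumerate}
\item Mader's theorem: every graph of average degree at least $4m$ contains an $m$-connected subgraph.
\item A linkage theorem: every $10k$-connected graph is $k$-linked. This is Thomas--Wollan; Bollob\'as--Thomason's $22k$ would also fit the constant $10^4$.
\end{enumerate}
Here $k$-linked means that for any $2k$ distinct vertices $x_1,y_1,\dots,x_k,y_k$ there are vertex-disjoint paths $P_1,\dots,P_k$ with $P_\ell$ joining $x_\ell$ to $y_\ell$.

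First, since $d(H)\ge 10^4h^2$, Mader's theorem gives a subgraph $H_0\subseteq H$ that is $m$-connected with $m=2500h^2$. In particular $\delta(H_0)\ge 2500h^2$.

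Next, pick distinct branch vertices $v_1,\dots,v_h\in V(H_0)$. Then, greedily, pick for each ordered pair $i\neq j$ a vertex $x_{ij}\in N_{H_0}(v_i)$ so that all $h+h(h-1)=h^2$ chosen vertices are distinct. The greedy step never gets stuck: when choosing $x_{ij}$, fewer than $h^2$ vertices are already used, while $v_i$ has at least $2500h^2$ neighbours.

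Then let $H_1=H_0-\{v_1,\dots,v_h\}$. Removing $h$ vertices from an $m$-connected graph leaves it $(m-h)$-connected, so $H_1$ is $(2500h^2-h)$-connected. Since $2500h^2-h\ge 5h^2\ge 10k$, the linkage theorem makes $H_1$ $k$-linked. Apply it to the $2k=h(h-1)$ distinct terminals, paired as $\{x_{ij},x_{ji}\}$ for $i<j$. This gives vertex-disjoint paths $P_{ij}\subseteq H_1$ from $x_{ij}$ to $x_{ji}$.

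Finally, the path $v_iP_{ij}v_j$ (that is, $v_i x_{ij}\,P_{ij}\,x_{ji} v_j$) joins $v_i$ to $v_j$. These paths are internally disjoint, because the $P_{ij}$ are disjoint and avoid all branch vertices. So together they form a subdivision of $K_h$.

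The only substantial ingredient is the linkage theorem, which I take from the literature rather than reprove; the counting above is routine. If one wanted a self-contained proof, that step would be the main obstacle. The alternative is Koml\'os--Szemer\'edi's sublinear-expander argument, which connects pairs one by one through short paths avoiding a growing forbidden set. That is essentially the method this paper develops in Section~\ref{sec:nearly_regular} for the induced setting.
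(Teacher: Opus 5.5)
The paper does not prove this statement. It imports it as a black box from Bollob\'as--Thomason and Koml\'os--Szemer\'edi and uses it only inside Lemma~\ref{lemma:lopsided_case}, on an auxiliary graph.

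Your argument is correct, and it is the standard Bollob\'as--Thomason derivation:
\begin{enumerate}
\item Mader's theorem (average degree $4m$ forces an $m$-connected subgraph) gives $H_0$ with connectivity $2500h^2$, hence minimum degree at least $2500h^2$.
\item The greedy choice of the $h(h-1)$ private neighbours cannot get stuck, since fewer than $h^2$ vertices are ever blocked.
\item $H_0-\{v_1,\dots,v_h\}$ is $(2500h^2-h)$-connected. This exceeds both $10\binom{h}{2}=5h(h-1)$ and $22\binom{h}{2}$, so either linkage theorem applies to the distinct terminals $x_{ij}$.
\item The linked paths avoid the branch vertices and each other, so prepending $v_i$ and appending $v_j$ gives internally disjoint $v_i$--$v_j$ paths, i.e.\ a $TK_h$.
\end{enumerate}

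Compared with the other cited route (sublinear expanders, joining pairs one at a time while avoiding a growing forbidden set), your proof has three advantages. It is short. With Thomas--Wollan it has an explicit, generous constant: about $20h^2$ already suffices. And it does not depend on the Koml\'os--Szemer\'edi write-up, in which the authors' footnote flags a possible gap.

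The expander route is the one this paper's own machinery follows. Its naive form, sketched in Section~\ref{section:sublinear_expanders}, only reaches average degree $h^2\log^3 h$.

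The price of your route is that all the difficulty sits inside the linkage theorem. Linkage also gives no control over chords or over edges between different paths. That is why the paper cannot use it for the induced problem: there it builds paths greedily while forbidding the closed neighbourhood of everything already used, and appeals to this theorem only after reducing to a setting where any subdivision of the auxiliary graph lifts to an induced one.
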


To construct the graph $H$ we do as follows.
Starting from the partition $V(G) = A \cup B$ with $|A| \gg |B|$ we find independent sets $A' \subseteq A$ and $B' \subseteq B$ such that: 1) $|A'| \geq 10^4h^2|B'|$, 2) each vertex in $A'$ has exactly two neighbours in $B'$, and 3) the pairs $N(a) \cap B'$ are distinct for each $a \in A'$. 
We then define the graph $H$ to have the vertex set $B'$ and add the edge $N(a) \cap B'$ for each $a \in A'$.
It is easy to see that a subdivision of $K_h$ in this $H$ gives us an induced subdivision of $K_h$ in $G$.

We note that since we can assume that $G$ is $d$-degenerate, finding large independent sets inside $A'$ and $B'$ is not difficult --- the task is thus to make the sets have the latter two properties.
We find the sets in two different ways depending on whether for each vertex in $A$ we can find a pair in its neighbourhood in $B$ which is not shared with a lot of other vertices in $A$ or not.
In the former case, we show that subsampling both sides at probability $1/d$ gives us the desired sets.
In the latter case, we will, roughly speaking, restrict ourselves to an independent set $B' \subseteq N(a) \cap B$ inside a neighbourhood of a bad vertex $a$ and construct $A'$ greedily.

Without further ado, let us prove that we can indeed find induced subdivisions in highly irregular graphs without a $K_{s, t}$ or without a $C_{2k}$.
Since the proofs for both cases follow the same line, we combine them both into one lemma.
\begin{lemma}\label{lemma:lopsided_case}
    Let $h \in \mathbb{N}$ and let $G$ be a $d$-degenerate graph with a partition $V(G) = A \cup B$ such that $|A| \geq K|B|$ and for each $a \in A$ we have $d(a) \leq 10^3d$ and $d(a, B) \geq d/10^3$.
    Then
    \begin{enumerate}
        \item Let $t \geq s \geq 2$ and suppose that $d \geq (10s)^{6s}th^{2(s-1)}$ and $K \geq t^2s^2d^{200s}$.
        Suppose moreover that $G$ is $K_{s, t}$-free.
        Then, $G$ contains an induced proper subdivision of $K_h$.
        \item Let $k \geq 2$ and suppose that $d \geq 10^{10}k^2h$ and $K \geq d^{200}$.
        Suppose moreover that $G$ is $C_{2k}$-free.
        Then, $G$ contains an induced proper subdivision of $K_h$.
    \end{enumerate}
    \begin{proof}
        We split the proof into two cases.

        \medskip
        \noindent
        \textbf{Case 1:} For some $a \in A$, every non-edge $\{b_1, b_2\} \in \binom{N(a) \cap B}{2}$ has $|N(b_1) \cap N(b_2) \cap A| \geq \sqrt{K}$.

        We first argue that this in fact cannot happen in the second part of the lemma, if $G$ is $C_{2k}$-free.
        Indeed, suppose such an $a \in A$ exists.
        We note that $N(a) \cap B$ is $P_{2k-1}$-free and in particular there exists an independent set $I \subseteq N(a) \cap B$ of size at least $|I| \geq d/(10^3\cdot2k) \geq k$. Let $b_1, \dots, b_k\in I$ be arbitrary vertices, and note that for any $1\leq i\leq k$ the vertices $b_i, b_{i+1}$ (where the indices are considered modulo $k$) have at least $\sqrt{K}>k$ common neighbours in $A$. The goal is then to choose a common neighbour $a_i\in N(b_i)\cap N(b_{i+1})\cap A$ for each $i$, such that $a_1, \dots, a_k$ are distinct. This can be done greedily: having picked $a_1, \dots, a_i$, we have forbidden at most $i<k$ vertices from being chosen as $a_{i+1}$, and so we can continue the process. Since $b_1a_1\dots b_ka_k$ is a cycle of length $2k$, we have a contradiction.

        If $G$ is $K_{s, t}$-free, let us show how to find an induced subdivision of $K_h$ in this case.
        To that end, we write $N = N(a) \cap B$ and define the set $F\subseteq A$ of vertices we will avoid using as $F = \{ a' \in A : |N(a') \cap N| \geq s \}$.
        Note that since $G$ is $K_{s, t}$-free we must have $|F| < t\binom{|N|}{s} \leq td^{10s}$.
        In particular, by our assumption, for each non-edge $\{b_1, b_2\} \in \binom{N}{2}$ we must have $|N(b_1) \cap N(b_2) \cap (A\setminus F)| \geq \sqrt{K} - |F| \geq s^2d^{10}$.

        Let now $I \subseteq N$ be an independent set of size $10^6s^2h^2$, which exists since $G[N]$ is $K_{s-1, t}$-free and so
        \[
            \alpha(G[N]) \geq \frac{|N|}{d(G[N])+1}\geq \frac{1}{2}\Big(\frac{|N|}{t}\Big)^{1/(s-1)} \geq \frac{1}{2}\Big(\frac{d}{10^3t}\Big)^{1/(s-1)} \geq 10^6s^2h^2.
        \]
        The second inequality above follows from the K\H{o}v\'ari-S\'os-Tur\'an theorem, which states that a $K_{s-1,t}$-free graph on $|N|$ vertices has average degree at most $2t^{1/(s-1)}|N|^{1-1/(s-1)}$.
        We note that in particular, this implies that we have $s \geq 3$, as otherwise a pair of vertices in $I$ together with their $\sqrt{K}\geq t$ common neighbours in $A$ would form a $K_{2,t}$.

        For each pair $\{b_1, b_2\}$ we want to pick a  $a^{(b_1, b_2)} \in N(b_1) \cap N(b_2) \cap (A\setminus F)$ such that all $a^{(b_1, b_2)}$'s are distinct and form an independent set.
        We can do that greedily --- after having picked some of the $a^{(b_1, b_2)}$'s the number of vertices in $A$ at distance at most one to one of them is at most
        \[
            (10^6 s^2 h^2)^2 \cdot (1 + 10^3d) \leq s^2d^{10}/2 \leq |N(b_1') \cap N(b_2') \cap (A \setminus F)|/2 
        \]
        for any $\{b_1', b_2'\} \in \binom{I}{2}$.

        We have thus almost found an induced subdivision of $K_{|I|}$ in $G$ --- except that the vertices $a^{(b_1, b_2)}$ can have more than $2$ neighbours in $I$ each.
        To circumvent that, let $B' \subseteq I$ be a random subset where each vertex is included independently with probability $1/s$.
        Let moreover $A' = \{ a^{(b_1, b_2)} : \{b_1, b_2\} \in \binom{I}{2} \wedge N(a^{(b_1, b_2)})\cap B' = \{b_1, b_2\}\}$.
        Using that $a^{(b_1, b_2)} \in A \setminus F$ we get that
        \[
            \mathbb{E}[|A'|] = \sum_{\{b_1, b_2\} \in \binom{I}{2}} \frac{1}{s^2}\Big(1 - \frac{1}{s}\Big)^{|N(a^{(b_1, b_2)}) \cap I| -2} \geq \binom{|I|}{2} \cdot \frac{1}{5s^2} \geq 10^4h^2 |I|,
	        \]
            where we used that $|N(a^{(b_1,b_2)})\cap N|<s$.
	        In particular, there exists a choice of $B'$ such that for the corresponding $A'$ we have $|A'| \geq 10^4h^2 |B'|$. 

        Finally, let $H$ be a graph on the vertex set $B'$ with edges $E(H) = \{ N(a) \cap B' : a\in A' \}$.
        Then, we get that $d(H) \geq 10^4h^2$ and thus by Theorem~\ref{theorem:noninduced_subdivisions} we can find a subdivision of $K_h$ in $H$.
        It is easy to see that this translates back into an induced proper subdivision of $K_h$ in $G$.

        \medskip
        \noindent
        \textbf{Case 2:} For each $a \in A$ there is a non-edge $\{b_1^{(a)}, b_2^{(a)}\} \in \binom{N(a) \cap B}{2}$ with $|N(b_1^{(a)}) \cap N(b_2^{(a)}) \cap A| \leq \sqrt{K}$.

        In this case, we want to subsample $A$ and $B$ to get the desired sets $A'$ and $B'$.
        To that end, let first $\{a_1, \dots, a_{\ell}\} = A$ be a degeneracy ordering of $A$ such that writing $N_d(a_i) = N(a_i)\cap \{a_{i+1}, \dots, a_\ell\}$ we have $|N_d(a_i)| \leq d$ for each $i \in [\ell]$.
        Such an ordering exists since by assumption our graph $G$ is $d$-degenerate.
        Similarly, let $\{b_1, \dots, b_{\ell'}\}$ be a degeneracy ordering of $B$ such that writing $N_d(b_i) = N(b_i)\cap \{b_{i+1}, \dots,  b_{\ell'}\}$ we have $|N_d(b_i)| \leq d$ for each $i \in [\ell']$.

        We now let $p = 1/(10^3d)$ and let $B_p \subseteq B$
        be a random subset of $B$, where each vertex is included independently with probability $p$.
        Since we want to find an independent set, we let
        \[
            B' = \{b_i \in B_p: N_d(b_i) \cap B_p = \varnothing\},
        \]
        and note that indeed $B'$ is an independent set by definition.
        Similarly, let $A_p \subseteq A$ be a random subset of $A$ where each vertex is included independently with probability $p$.
        Define
        \[
            A' = \{a_i \in A_p: N_d(a_i) \cap A_p = \varnothing \text{ and } N(a_i) \cap B' = \{b_1^{(a)}, b_2^{(a)}\}\},
        \]
        and note that $A'$ is an independent set.

        We now want to argue that there exists a choice of $A', B'$ such that $|A'| \geq K|B|/10^{11}d^3$. 
        To that end, we estimate $\Pr[a\in A']$ for each $a \in A$. Observe that $a \in A'$ if $a \in A_p$, no vertex in $N_d(a)$ is included in $A_p$, the vertices $b_1^{(a)}, b_2^{(a)}$ are included in $B_p$ and no vertex in $(N(a) \cap B\backslash\{b_1^{(a)}, b_2^{(a)}\}) \cup N_d(b_1^{(a)})\cup N_d(b_2^{(a)})$ is included in $B_p$. Since $b_1^{(a)}b_2^{(a)}$ is not an edge, all of these events are independent, and thus we have 
        \[
            \Pr[a \in A'] \geq p \cdot(1-p)^{|N_d(a)|} \cdot p^2 \cdot (1-p)^{|(N(a) \cap B) \cup N_d(b_1^{(a)})\cup N_d(b_2^{(a)})|-2}\geq p^3(1-p)^{2\cdot 10^3d} \geq 1/(10^{11}d^3),
	        \]
	        where we used that $p = 1/(10^3d)$ to bound $(1-p)^{2\cdot 10^3d}\geq e^{-3}\geq 1/30$.
        In particular, 
        \[
            \mathbb{E}[|A'|] \geq |A| /(10^{11}d^3) \geq K|B|/(10^{11}d^3),
        \]
        and so there exists an outcome of $A', B'$ such that $|A'| \geq K|B|/(10^{11}d^3)$.

        Finally, let $H$ be the graph on the vertex set $B'$ with $E(H) = \{N(a) \cap B': a \in A'\}$.
        Since $b_1^{(a)}, b_2^{(a)}$ have at most $\sqrt{K}$ common neighbours in $A$, each edge of $H$ is defined by at most $\sqrt{K}$ vertices in $A'$. Hence, we have $|E(H)| \geq |A'|/\sqrt{K} \geq K|B|/(10^{11}d^3\sqrt{K})$. In particular $d(H) \geq 10^4h^2$.
        By Theorem~\ref{theorem:noninduced_subdivisions} we can therefore find a subdivision of $K_h$ in $H$.
        It is easy to see that this gives an induced proper subdivision of $K_h$ in $G$.
    \end{proof}
\end{lemma}

\section{Nearly-regular graphs}\label{sec:nearly_regular}
\subsection{Finding induced $K_{2,\log d}$-free subgraphs in $K_{s, t}$-free graphs}
In this section, we want to show that a nearly-regular $K_{s, t}$-free graph $G$ with average degree $d$ contains an induced $K_{2, \log d}$-free subgraph with average degree $\Omega(d^{1/(s-1)}/\log d)$.
In fact, we show that, after some cleaning, a random subset of vertices of $G$ has such a property.

More specifically, we set $p = \Theta(d^{-1 + 1/(s-1)} /\log d)$ and let $G_p \subseteq G$ be an induced subgraph where each vertex is included independently with probability $p$.
The random graph $G_p$ has the desired average degree with high probability, and so our main goal will be to eliminate the copies of $K_{2, \log d}$ by removing a small portion of vertices.

In other words, our goal is to guarantee that no pair $u, v\in G_p$ has codegree $d_G(u, v)\geq \log d$, and we split the pairs $u, v$ into two types, based on their codegree in the graph $G$.  
First, note that by K\H{o}v\'ari-S\'os-Tur\'an for each $u \in V(G)$ there are at most $O(d^{1-1/(s-1)})$ vertices $v$ having $d_G(u, v)\geq \Omega(d^{1-1/(s-1)})$.
In particular, the expected number of such high co-degree pairs that make it into $G_p$ is small --- and we will remove a vertex from each such pair.
Second, note that for each pair $u, v$ with co-degree $O(d^{1-1/(s-1)})$ it is very unlikely that $\log d$ of their common neighbours make it into $G_p$ --- and so again we are able to remove a vertex from each such $u, v$ pair to get rid of all $K_{2, \log d}$'s. 

Let us now make this sketch more formal.
\begin{lemma}\label{lemma:reduction_to_k2t}
    Let $K \geq 1$ and $t \geq s \geq 3$.
    Let $G$ be a $K_{s,t}$-free graph with average degree $d \geq e^{Ke^{15}} (10^4sKt)^{2s}$ and $\Delta(G) \leq Kd$.
    Then, there exists an induced $K_{2, \log d}$-free subgraph $H \subseteq G$ with average degree $d(H) \geq d^{1/(s-1)}/(40Kt \log d)$ and $\Delta(H) \leq 16Kd(H)$.
\end{lemma}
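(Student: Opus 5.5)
We follow the sketch given before the statement. Set $D = d^{1-1/(s-1)}$ and $p = 1/(4tD\log d)$, so that $pd = d^{1/(s-1)}/(4t\log d)$. Let $G_p$ be the random induced subgraph on the vertex set $V_p$, where each vertex of $G$ is kept independently with probability $p$. Call a pair $\{u,v\}$ \emph{heavy} if $d_G(u,v) \geq T := 2tD$, and \emph{light} otherwise.

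\textbf{Heavy pairs.} Fix $u$. The set $N(u)$ has size at most $Kd$. Heavy partners $v$ are vertices with at least $T$ neighbours in $N(u)$. If there were more than $M$ of them, consider the bipartite graph between $N(u)$ and these partners. Counting $(s-1)$-stars from the partner side gives
\[
M\binom{T}{s-1} \le (t-1)\binom{Kd}{s-1},
\]
since otherwise we would find $s-1$ vertices of $N(u)$ with $t$ common neighbours, which together with $u$ yields a $K_{s,t}$. Hence $M \lesssim t(Kd/T)^{s-1} = t K^{s-1} d^{s-1}/(2tD)^{s-1}$. Using $D^{s-1}=d^{s-2}$, this is at most roughly $K^{s-1} d/(2^{s-1} t^{s-2})$. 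If this is too large, I would raise $T$ by a factor $K$, or argue directly with $\binom{Kd}{s-1}/\binom{T}{s-1}$. The target is that the number of heavy pairs is $O(nd\cdot\mathrm{poly}(K)/t^{s-2})$. The expected number of heavy pairs surviving in $V_p$ is then $p^2\cdot O(n d\,\mathrm{poly}(K))$. This should be much smaller than the expected number of edges $p^2 nd/2$ only if $\mathrm{poly}(K)$ is dominated, which may fail. The fix is to instead choose $T$ so that the heavy pair count is at most $nd/100$, i.e.\ $T \approx 2(100t)^{1/(s-1)}Kd\cdot d^{-1/(s-1)}$. This costs only a factor $K t^{1/(s-1)}$ in $p$, consistent with the $40Kt$ in the statement. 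So I will take $T = C K t D$ and $p = 1/(C'KtD\log d)$.

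\textbf{Light pairs.} For a light pair, the number of common neighbours in $V_p$ is a binomial with mean at most $pT \le 1/(10\log d)$. Hence the probability that at least $\log d$ of them survive is at most
\[
\binom{T}{\log d}p^{\log d} \le (epT/\log d)^{\log d} \le d^{-3}.
\]
Only pairs at distance $2$ matter, and there are at most $n(Kd)^2$ of them. So the expected number of light pairs $u,v\in V_p$ with codegree at least $\log d$ in $G_p$ is at most $p^2 n K^2 d^2 \cdot d^{-3}$, which is negligible. Here the condition $d \ge e^{Ke^{15}}$ absorbs the powers of $K$.

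\textbf{Cleaning and degrees.} Delete one vertex from each bad pair (heavy or light) in $V_p$, and also delete every vertex whose degree in $G_p$ exceeds $2pKd$. The expected number of edges of $G_p$ is $p^2nd/2$, and $|V_p|\approx pn$. Each deleted vertex removes at most $2pKd$ edges. The number of bad pairs is $O(p^2nd/100)$, so the edges lost to bad pairs number $O(p^3nKd^2/100)$. This is too many unless we are more careful, so instead we delete both vertices of each bad pair only after first capping degrees. Note that $p^2 nd \cdot pKd/100$ compares to $p^2nd$ as $pKd$, which is large. This is the main obstacle. The remedy is to count edges incident to bad pairs in expectation directly, using independence: for a heavy pair $(u,v)$, the expected number of $G_p$-edges at $u$ given $u,v\in V_p$ is about $pd$. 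So we need the heavy pair count to be at most $nd/(100Kpd)$ rather than $nd/100$. This means pushing $T$ up further by a factor $(pKd)^{1/(s-1)}$, which is acceptable since $pd$ is tiny compared with $d$. Similarly, high-degree vertices are removed via Chernoff, since $pd\ge \log^2 d$.

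After cleaning, we obtain $H$ with $e(H) \ge p^2nd/4$ and $|H|\le 2pn$. This gives $d(H) \ge pd/4 \ge d^{1/(s-1)}/(40Kt\log d)$ and $\Delta(H) \le 2pKd \le 16K d(H)$. A standard expectation argument, taking a linear combination of the quantities, shows that such an outcome exists.
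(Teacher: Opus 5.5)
There is a genuine gap in the heavy-pair step, which is the heart of this lemma. Your star count is oriented the wrong way. If $s-1$ vertices of $N(u)$ have $t$ common neighbours among the heavy partners of $u$, adding $u$ only yields a $K_{s-1,t+1}$, because $u$ is adjacent to the $s-1$ vertices, not to the partners. So $K_{s,t}$-freeness does not give $M\binom{T}{s-1}\le (t-1)\binom{Kd}{s-1}$.

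Even taken at face value, that bound allows up to order $K^{s-1}d/t^{s-2}$ heavy partners per vertex at $T\approx tD$, comparable to the degree. That is what pushes you into the later fixes, and those fixes are inconsistent. Your light-pair estimate needs $pT\lesssim 1/\log d$. Raising $T$ by a factor $(pKd)^{1/(s-1)}\approx d^{1/(s-1)^2}$ therefore forces $p$, and hence $d(H)\approx pd$, down by the same polynomial factor. The remark that $pd$ is tiny compared with $d$ does not help here. Your final inequality $pd/4\ge d^{1/(s-1)}/(40Kt\log d)$ requires $p\ge 1/(10KtD\log d)$, which is incompatible with the enlarged $T$.

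The missing idea is to run the K\H{o}v\'ari--S\'os--Tur\'an count with the $(s-1)$-sets on the partner side. Fix $v$, let $T=5Ktd^{1-1/(s-1)}$, and suppose $U$ is a set of $T/2$ vertices with $d_G(u,v)\ge T$ for all $u\in U$. In $G[U,N(v)\setminus U]$ every $u\in U$ has degree at least $T/2$. Moreover, no $s-1$ vertices of $U$ have $t$ common neighbours in $N(v)$, since adding $v$ (adjacent to all of $N(v)$) would create a $K_{s,t}$. Counting stars centred in $N(v)\setminus U$ and using convexity gives
\[
(t-1)\binom{|U|}{s-1}\ \ge\ |N(v)\setminus U|\binom{T^2/(4|N(v)\setminus U|)}{s-1},
\]
which fails for this $T$ because $|N(v)\setminus U|\le Kd$.

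So every vertex has fewer than $T/2$ heavy partners. With $p=1/(T\log d)$, the expected number of heavy pairs inside $V_p$ is at most $nTp^2=np/\log d$. After removing the vertices of $G_p$-degree at least $2Kpd$, deleting one vertex per surviving heavy pair costs at most about $2Kpd\cdot np/\log d\ll p^2nd$ edges, because $\log d\ge Ke^{15}$. This is exactly your target of at most $nd/(100Kpd)$ heavy pairs, reached without touching $p$. With it, the rest of your outline matches the paper's proof: light pairs via $\binom{T}{\log d}p^{\log d}$, Chernoff for high degrees, and Markov to choose a good outcome.
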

\begin{proof}
    Let $n = |V(G)|$ and set $p = 1/(5Ktd^{1-1/(s-1)}\log d)$.
    We will take $V_p \subseteq V(G)$ to be a random subset where each vertex is included independently with probability $p$ --- and write $G_p = G[V_p]$.
    
    We want to obtain the graph $H$ by deleting some vertices from $G_p$.
    To that end, let $T = 5Ktd^{1-1/(s-1)}$ and define 
    \[
        A_{h} = \{ \{u, v \} \in \binom{V_p}{2}: d_G(u,v) \geq T\}
    \]
    and
    \[
        A_{l} = \{ \{u,v\} \in \binom{V_p}{2} : d_G(u,v) < T \text{ and } d_{G_p}(u, v) \geq \log d\}.
    \]
    We note that by deleting one vertex from each pair in $A_h \cup A_l$ from $G_p$ we obtain a $K_{2, \log d}$-free graph.
    Finally, let also 
    \[
        B = \{ v \in V_p: d_{G_p}(v) \geq 2Kpd\}.
    \]
    By deleting $B$ from $G_p$ we will guarantee that $\Delta(H) \leq 16Kd(H)$.
    
    We will let $H$ to be the graph obtained from $G_p$ by deleting one vertex from each pair in $A_h \cup A_l$ and the vertices in $B$ --- we want to show that with positive probability it satisfies the conditions of the lemma.
    To achieve that, we will define four good events under whose intersection this holds and show that each of these events is satisfied with probability at least $4/5$ --- so that their intersection holds with positive probability. We require that: 1) $pn/2 \leq |V_p| \leq 2pn$ and $e(G_p) \geq p^2dn/4$, 2) $|B| \leq n/(Kd^{10})$, 3) $|A_l| \leq n/d^8$, and 4) $|A_h| \leq np/(100K)$.

    Since $|V_p|\sim {\rm Bin}(n, p)$, from the Chernoff bound (see e.g. Corollary 2/3 in \cite{JLR}) we have that $\Pr\big[\big||V_p|-np\big|>a np\big]\leq 2\exp(-a^2 np/3)$ for all $a\in (0, 3/2)$. Applying this with $a=1/2$, we get that $\Pr\big[\big||V_p|-pn\big|>pn/2\big]\leq 2\exp(-pn/12)<1/10$, where the last inequality follows since $pn\geq pd\geq \Omega(d^{1/(s-1)}/\log d)$.

    On the other hand, we have $\bE[e(G_p)]=p^2e(G)\geq p^2dn/2$ and ${\rm Var}[e(G_p)]\leq \sum_{e_1, e_2}(\bE[\mathbf{1}_{e_1\in G_p}\mathbf{1}_{e_2\in G_p}]-p^4)$. The only terms with a nonzero contribution are those where $e_1, e_2$ intersect, and each such term contributes a factor of $p^3-p^4\leq p^3$. Since there are at most $e(G)\cdot 2\Delta(G) \leq Kd^2n$ such pairs, we get ${\rm Var}[e(G_p)] \leq Kp^3d^2n$. Thus, 
    \[    \frac{{\rm Var}[e(G_p)]}{\bE[e(G_p)]^2} \leq \frac{Kp^3d^2n}{p^4d^2n^2/4} = \frac{4K}{pn} \leq 1/10,\]
    where the last inequality follows as before from $pn\geq \Omega(d^{1/(s-1)}/\log d)$. By Chebyshev's inequality, we get that $\Pr[e(G_p) < p^2dn/4] \leq 1/10$ and so by the union bound with the previous event we get that with probability at least $4/5$ we have both $pn/2 \leq |V_p| \leq 2pn$ and $e(G_p) \geq p^2dn/4$.
    
    To show that with probability at least $4/5$ we have $|B| \leq n/(Kd^{10})$, we use Chernoff's inequality which states that $\Pr[{\rm Bin}(m, p)\geq mp+a]\leq \exp(-a^2/2(mp+a/3))$. So, for each $v\in V(G)$ we have
    \[
        \Pr[v \in B] = \Pr\big[|N_{G}(v)\cap V_p| \geq p|N_G(v)|+Kpd\big] \leq \exp\Big({-\frac{(Kpd)^2}{2(p|N_G(v)|+Kpd/3)}}\Big)\leq 1/(5Kd^{10}),
    \]
    where we used that $|N_G(v)| \leq Kd$ for each $v \in V(G)$. Since $\bE[|B|] \leq n/(5Kd^{10})$, by Markov's inequality we get that $|B| \leq n/(Kd^{10})$ with probability at least $4/5$.

    To get a bound on $|A_l|$ we note that for each pair $u,v \in V(G)$ with $1 \leq d_G(u, v) \leq T$ the probability that $d_{G_p}(u, v) \geq \log d$ is at most
    \[
        \binom{T}{\log d} p^{\log d} \leq \left(\frac{eTp}{\log d}\right)^{\log d} \leq \left( \frac{e}{\log^2d} \right)^{\log d} \leq 1/(10K^2d^{10}).
    \]
    In particular, since for each $v \in V(G)$ there are at most $2(\Delta(G))^2 \leq 2K^2d^2$ vertices $u \in V(G)$ with $d(u, v) \geq 1$, we have $\mathbb{E}[|A_{l}|] \leq n / 5d^{8}$ and so by Markov's inequality we get that $|A_l| \leq n/d^8$ with probability at least $4/5$.

    Finally, to get a bound on $|A_h|$ we first fix a vertex $v \in V(G)$ and want to argue that there are less than $T/2$ vertices $u \in V(G) \setminus\{v\}$ with $d_G(u, v) \geq T$.
    Suppose for contradiction that this is not the case and let $U \subseteq V(G) \setminus \{v\}$ be a set of $T/2$ vertices such that $d_G(u,v) \geq T$ for each $u \in U$ and consider the induced bipartite graph $G_v = G[U, N(v) \setminus U]$.
    In $G_v$, there is no copy of $K_{s-1, t}$ with the $s-1$ vertices in $U$ --- as together with $v$ this would give us a $K_{s, t}$ in $G$.
    Therefore, using that $d_{G_v}(u) \geq T/2$ for each $u \in U$ we get
    \[  
        (t-1)\binom{|U|}{s-1} \geq \sum_{w \in N(v) \setminus U} \binom{d_{G_v}(w)}{s-1} \geq |N(v) \setminus U| \cdot \binom{T^2/(4|N(v) \setminus U| )}{s-1}.
    \]
    On the other hand, since $|N(v)| \leq Kd$, we get that
    \[
        |N(v) \setminus U| \cdot \binom{T^2/(4|N(v) \setminus U| )}{s-1} \geq Kd\binom{25Kt^2d^{1-\frac{2}{s-1}}/4}{s-1} > (t-1)\left( \frac{e T/2}{s-1} \right)^{s-1}\geq (t-1)\binom{|U|}{s-1},
    \]
    a contradiction.
    
    In particular, we get that $\mathbb{E}[|A_h|] \leq nTp^2 \leq np/\log d$.
    Therefore, by Markov's inequality, with probability at least $4/5$ we get $|A_h| \leq np/(100K)$.

    We conclude that there must exist a choice of $G_p$ for which the four above events hold.
    For such a $G_p$, we let $H$ be a graph obtained from $G_p$ by deleting a vertex from each pair in $A_l \cup A_h$ and all vertices in $B$.
    Then, by construction $H$ is $K_{2, \log d}$-free and $\Delta(H) \leq 2Kpd$.
    Moreover,
    \[
        e(H) \geq p^2dn/4 - |B|Kd - (|A_h| + |A_l|)2Kpd \geq p^2dn/8.
    \]
    and therefore, using $|V(H)| \leq 2pn$ we get that $d(H) \geq pd/8$.
    In particular, $H$ satisfies the conditions of the lemma.
\end{proof}

\subsection{Sublinear expanders}\label{section:sublinear_expanders}
In this section, we give the definition of a sublinear expander.
Sublinear expanders were first defined by Koml\'os and Szemer\'edi, who used them to show Theorem~\ref{theorem:noninduced_subdivisions}, i.e. that every graph with average degree $\Omega(h^2)$ contains a subdivision of $K_h$.
A graph is an expander if the size of the neighbourhood $N(U)$ of every set $U \subseteq V(G)$ is at least $f(|U|) \cdot |U|$ for some function $f$, and the term sublinear refers to the fact that we will take $f$ to be (slowly) decreasing in $|U|$.

\begin{definition}[Sublinear expander]
    Let $r > 0$. We say that a graph is an $r$-expander if for every $U \subseteq V(G)$ with $r/2 \leq |U| \leq n/2$ we have
    \[
        |N(U)| \geq \frac{|U|}{10 \log^2 (15|U|/r)}.
    \]
\end{definition}

A very useful feature of sublinear expanders is that every graph $G$ contains a sublinear expander $H$ with roughly the same average degree as $G$ as a subgraph.

\begin{theorem}[\cite{KomlosSzemeredi96}]\label{theorem:pass_to_expander}
    Let $r > 0$ and let $G$ be a graph.
    Then, there is an induced subgraph $H \subseteq G$ which is an $r$-expander such that $d(H) \geq d (G)/2$ and $\delta(H) \geq d(H)/2$.
\end{theorem}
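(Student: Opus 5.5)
We follow the standard Komlós--Szemerédi approach: take a minimal counterexample to expansion, and show that deleting a non-expanding set (or what remains after iterating) keeps the average degree high. First we pass to a subgraph $G_0 \subseteq G$ maximizing a weighted density. Let $\varepsilon(x) = 1/(10\log^2(15x/r))$ for $x \ge r/2$. Among all induced subgraphs $G' \subseteq G$, we choose $H$ to maximize the potential
\[
  \Phi(G') = \frac{e(G')}{|G'|} \quad\text{subject to } d(G') \ge d(G)/2 ,
\]
or, more robustly, we take $H$ minimal (with respect to induced subgraphs) among those with $d(H) \ge d(G)\,\prod(1+\ldots)$. Concretely, the plan is to define the ``adjusted density'' $\rho(G') = d(G')\prod_{j}(1 - \varepsilon_j)$ along a deletion process, and pick a minimal induced subgraph $H$ with $e(H) \ge \frac{d(G)}{2}\,|H|\,(1+\eta(|H|))$ for a suitably chosen slowly varying correction $\eta$. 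The minimum-degree condition $\delta(H) \ge d(H)/2$ then comes for free: deleting a vertex of degree $< d(H)/2$ would preserve the defining inequality and contradict minimality.

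The key step is the expansion. Suppose some $U$ with $r/2 \le |U| \le |H|/2$ has $|N(U)| < \varepsilon(|U|)|U|$. We split $V(H) = U \cup N(U) \cup W$. Every edge of $H$ lies inside $U \cup N(U)$ or inside $N(U) \cup W$. One of $H[U\cup N(U)]$ or $H[W\cup N(U)]$ therefore has edge-to-vertex ratio at least roughly $e(H)/|H|$ times $(1 - O(\varepsilon(|U|)))$, since the overlap $N(U)$ is only an $\varepsilon$-fraction of $U$. Both pieces are proper induced subgraphs: $W \cup N(U)$ misses $U$, and $U \cup N(U)$ misses at least $|H|/2 - \varepsilon|U| > 0$ vertices. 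The correction function $\eta$ must be chosen so that this loss of a factor $1 - O(\varepsilon(x))$ when passing to a set of size about $x$ (or $|H| - x$) is absorbed. This is exactly where the condition $\sum_k \varepsilon(2^k r) < \infty$ enters, since it gives $\sum 1/(k^2)$ convergence, and it ensures the total multiplicative loss is at most a factor $2$. That loss keeps $d(H) \ge d(G)/2$.

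The main obstacle is engineering $\eta$ so that the inequality is genuinely preserved in both branches, because the sizes of the two pieces differ: a small piece of size about $|U|$ and a large piece of size $|H| - |U|$. I would follow Komlós--Szemerédi and define $\eta(x) = \sum_{k:\,2^k r \ge x}\varepsilon(2^{k}r)$ (suitably normalized, and with a cap for $x < r/2$ where no expansion is required). We then verify separately:
\begin{enumerate}
\item the small piece $U \cup N(U)$, which has size at most $(1+\varepsilon)|U|$, gains a term of order $\varepsilon(|U|)$ in $\eta$;
\item the large piece loses at most $\varepsilon(|U|)|U|/|H|$ in density, while $\eta$ does not decrease.
\end{enumerate}
Averaging the edge counts shows that at least one of the two inequalities must hold, contradicting minimality. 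Hence $H$ is an $r$-expander with $d(H) \ge d(G)/2$ and $\delta(H) \ge d(H)/2$.
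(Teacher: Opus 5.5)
\textbf{There is a genuine gap.} The paper quotes this result from Koml\'os--Szemer\'edi without proof. Your overall route is the standard one: take an induced subgraph that is extremal for a size-corrected density, split $H$ along a non-expanding set $U$ into $X=U\cup N(U)$ and $Y=V(H)\setminus U$ (so $e(H)\le e(X)+e(Y)$ and $|X|+|Y|=|H|+|N(U)|$), and use summability of $\varepsilon(2^kr)$ to bound the total loss. But the selection rule you commit to has the correction in the wrong direction.

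Your $\eta$ is non-increasing in the number of vertices, and you take $H$ minimal with $e(H)\ge \frac{d(G)}{2}|H|(1+\eta(|H|))$. So smaller graphs face a \emph{stricter} threshold. This breaks the argument in three places.
\begin{enumerate}
\item $G$ itself fails the condition, since $\eta(|G|)>0$, so the family you minimise over may be empty.
\item The averaging step can never give a contradiction. If both pieces fail, you only learn $|H|(1+\eta(|H|))<|X|(1+\eta(|X|))+|Y|(1+\eta(|Y|))$. The right-hand side is at least $(|H|+|N(U)|)(1+\eta(|H|))$ anyway, so this is consistent. The ``gain in $\eta$'' you count in item 1 actually makes the small piece harder to keep.
\item The vertex-deletion argument for $\delta(H)\ge d(H)/2$ needs the threshold to be non-decreasing in $|H|$. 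It fails exactly where your $\eta$ jumps.
\end{enumerate}
A consistent formulation is to take $H$ maximising $\Lambda(H)=\frac{e(H)}{|H|}(1+\eta(|H|))$ over induced subgraphs, with $\eta$ non-increasing and $\eta\le 1$. Then $d(H)\ge d(G)/(1+\eta_{\max})\ge d(G)/2$, and deleting a vertex of degree below $d(H)/2$ strictly increases $\Lambda$. A non-expanding $U$ contradicts maximality as soon as $\frac{|X|}{1+\eta(|X|)}+\frac{|Y|}{1+\eta(|Y|)}<\frac{|H|}{1+\eta(|H|)}$.

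\textbf{The dyadic step function also fails.} Even in that corrected formulation, $\eta(x)=\sum_{k:2^kr\ge x}\varepsilon(2^kr)$ does not give the gain you claim. Since $|H|$ can be as small as about $2|U|$, both windows $[|X|,|H|)$ and $[|H|-|U|,|H|)$ can miss every point $2^kr$. For example, write $u=|U|$ and $\varepsilon=\varepsilon(u)$, and take $2^kr=(1+\varepsilon/2)u$, $|N(U)|\approx 0.9\varepsilon u$ and $|H|=2^{k+1}r$. Then neither piece gains anything, and the required inequality reduces to $|N(U)|<0$.

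You need a continuously varying correction, for instance $\eta(x)=C\int_x^\infty \varepsilon(y)\,\frac{dy}{y}$ for $x\ge r/2$. Substituting $z=\log(15y/r)$ gives $\eta_{\max}\le C/(10\log 7.5)$. Using $|H|\ge 2u$ and $|X|<(1+\varepsilon(u))u$, one gets $\eta(|X|)-\eta(|H|)\ge C\,\varepsilon(2u)\log\frac{2}{1+\varepsilon(u)}\ge \varepsilon(u)(1+\eta_{\max})$ for, say, $C=4$. Combined with the bound $\frac{|Y|}{1+\eta(|Y|)}\le \frac{|H|-u}{1+\eta(|H|)}$, this gives the contradiction.

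Finally, your item 2 is not correct as stated. $Y$ on its own can lose almost all of its density; only the sum $e(X)+e(Y)$ is controlled.
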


Let us mention a property of sublinear expanders which makes them very useful for embedding problems: if $G$ is a sublinear expander on $n$ vertices and $U_1, U_2, F \subseteq V(G)$ are such that $|U_1|, |U_2| \gg |F|$ then there exists a path from $U_1$ to $U_2$ of length $O(\log^3 n)$ avoiding the set $F$. This allows one to build the respective structure step-by-step, while avoiding the previously used vertices at every step. In fact, we will use a slightly stronger property --- to explain why, let us give a sketch of how one could prove that every sublinear expander with average degree $\Omega(h^2 \log^3h)$ contains a subdivision of $K_h$.

We start by picking arbitrary vertices $v_1, \dots, v_h \in V(G)$.
To find a subdivision of $K_h$ in $G$ we want to find internally vertex-disjoint paths $P_{ij}$ of length $O(\log^3n)$ from $v_i$ to $v_j$ for each pair $v_i, v_j$.
We will do that greedily --- at any step we will let $F$ be all vertices of the already found paths and set $U_1 = N(v_i)$ and $U_2 = N(v_j)$ for some not-yet connected pair $v_i, v_j$.
Note that if $n \leq poly(h)$, then $\log n \leq O(\log h)$ and since $|F| \leq h^2 \log^3h \ll |U_1|, |U_2|$, we will be able to find the next path $P_{ij}$ using sublinear expansion.
If however $n \gg poly(h)$, then the set $F$ might be too large for us to do so --- and in fact we might then use up the whole neighbourhood of some of the $v_i$'s before we get the chance to connect them to all other vertices.

A standard way of dealing with this issue is to present a separate argument for this very sparse case. But in this paper we intend to instead take a unified approach. In order to avoid exhausting the neighbourhoods of $v_1, \dots, v_h$, we construct paths which do not use many vertices close to $v_1, \dots, v_h$. To make this precise, we introduce the following definition.

\begin{definition}[A drifting-away path]
    Let $G$ be a graph and $U \subseteq V(G)$.
    We say that a path $P = v_1\dots v_k$ in $G$ is \textit{drifting away} from $U$ if for all $\ell \in [k]$ we have
    \[
        {\rm dist}(v_{\ell}, U) \geq \Big\lfloor \frac{\ell^{1/3}}{20 \log \Delta(G)} \Big\rfloor.
    \]
\end{definition}

Note that if a path $P=v_1 \dots v_k$ is drifting away from $U$, then for any $t$, only the vertices $v_\ell$ with $\ell^{1/3}/(20\log \Delta(G))\leq t+1$ can be contained in $B^{(t)}(U)$. Hence, $|B^{(t)}(U)\cap V(P)|\leq (20\log \Delta(G))^3 (t+1)^3$.

\noindent We will connect the branch vertices with paths that are drifting away from $v_1, \dots, v_h$.
This will make sure that, even though the paths themselves might be long, from the perspective of small sets around the branch vertices they will behave as if they had length $O(\log^3\Delta(G)) = O(\log^3h)$.

The following lemma serves as our analogue to the statement from before that we can find short paths avoiding a small set $F$ between any two sets $U_1, U_2$.
Here, we require the path to be drifting-away and also allow $F$ to be large if we get a bound on the number of vertices in $F$ that are not far from $U_1 \cup U_2$.
\begin{lemma}\label{lemma:drifting_away_paths}
    Let $r > 0$ and let $G$ be an $r$-expander.
    Let $U, U_1, U_2 \subseteq V(G)$ be sets such that $|U_1|, |U_2| \geq r$ and $|U| \leq r/10^3$.
    Let moreover $F \subseteq V(G)$ be a set of forbidden vertices such that $|B^{(\ell)}(U_1 \cup U_2) \cap F| \leq r(\ell+1)^3/10^{15}$ for each $\ell \geq 0$.
    Then, there exist paths $P_1, P_2$ in $G$ which are disjoint from $F$ and drifting-away from $U$ such that $P_1P_2^{-1}$ is a path from $U_1$ to $U_2$ (here, $P_1P_2^{-1}$ denotes the concatenation of the path $P_1$ with the reverse of $P_2$).
\end{lemma}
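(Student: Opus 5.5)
The plan is to grow two balls, one from $U_1$ and one from $U_2$, inside $G - F$, and to make every shortest path from the root set to a vertex of the ball drifting away from $U$. Once the two balls are large enough, expansion makes them meet.

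\textbf{Step 1: the growing sets.} Set $X_0 = U_1 \setminus (F \cup B^{(0)}(U))$. Note that $|U_1 \cap F| \le r/10^{15}$ by the hypothesis with $\ell = 0$, and $|U| \le r/10^3$, so $|X_0| \ge r/2$. Inductively, let
\[
X_{i+1} = X_i \cup \bigl( N(X_i) \setminus (F \cup B^{(\rho(i+1))}(U)) \bigr),
\]
where $\rho(j) = \lfloor (j+1)^{1/3}/(20\log\Delta(G)) \rfloor$ is the required distance from $U$ at step $j$. A vertex first reaching $X_j$ comes from a vertex of $X_{j-1}$. Tracing back gives a path $v_1 \dots v_m$ with $m \le j+1$ whose $\ell$-th vertex was added at step $\ell - 1$, so it lies at distance at least $\rho(\ell - 1) \ge \lfloor \ell^{1/3}/(20\log\Delta)\rfloor$ from $U$. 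Since $\rho$ is nondecreasing, every vertex added by step $\ell-1$ satisfies this bound, and the traced path is drifting away from $U$ and avoids $F$. Build $Y_i$ from $U_2$ in the same way.

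\textbf{Step 2: growth.} I will show that $|X_i|$ grows like $\exp(\Omega(i^{1/3}))$ until it exceeds $n/2$. By $r$-expansion,
\[
|N(X_i)| \ge |X_i| / \bigl(10\log^2(15|X_i|/r)\bigr).
\]
From this we lose two things.
\begin{itemize}
\item The forbidden vertices. Every vertex of $X_i$ is within distance $i$ of $U_1$, so $N(X_i)\cap F \subseteq B^{(i+1)}(U_1\cup U_2)\cap F$, which has size at most $r(i+2)^3/10^{15}$.
\item The ball $B^{(\rho)}(U)$. Its size is at most $|U|\Delta^{\rho} \le (r/10^3)\exp\bigl((i+2)^{1/3}/20\bigr)$, using $\rho \log \Delta \le (i+2)^{1/3}/20$. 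This is where the $\log\Delta$ in the definition of drifting-away is used.
\end{itemize}
If $|X_i| \ge r\exp(c\, i^{1/3})$ with a constant $c$ somewhat larger than $1/20$, then the expansion gain is about $|X_i|/(10c^2 i^{2/3})$. This dominates both losses and also sustains the growth rate, since $\exp(c(i+1)^{1/3}) / \exp(c\, i^{1/3}) \approx 1 + c\, i^{-2/3}/3$. The gain itself is superpolynomial in $i$, so it dominates the polynomial $F$-loss. For small $i$, the large constant $10^{15}$ handles the start. I expect this bookkeeping to be the main obstacle. The constants must be chosen so that the growth exponent beats the $1/20$ exponent of the ball around $U$ while remaining compatible with the expansion rate $\log^{-2}$. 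If that balance fails, I will fall back on the cruder statement that $|X_i|$ at least doubles over every window of about $\log^2$ steps. Because the loss from $U$ grows by only a factor $e^{1/20}$ per window, this still dominates.

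\textbf{Step 3: meeting and assembly.} The process continues until $|X_i| > n/2$ and $|Y_i| > n/2$. The expansion definition only applies to sets of size at most $n/2$, so stop at the first time either exceeds $n/2$ and keep growing the other. Then the two sets intersect, or, allowing for the forbidden vertices, they are joined by an edge through a common vertex. Take $x \in X_i \cap Y_{i'}$. Let $P_1$ be the traced path from $U_1$ to $x$ and $P_2$ the traced path from $U_2$ to $x$, each drifting away from $U$ and disjoint from $F$. Choosing $x$ to be the first vertex of $P_1$ that lies on $P_2$, and truncating $P_2$ at that point (a prefix of a drifting-away path is still drifting away), makes $P_1 P_2^{-1}$ a path from $U_1$ to $U_2$.
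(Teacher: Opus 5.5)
Your proposal is correct and follows the paper's proof. You grow sets from $U_1$ and $U_2$ that avoid $F$ and the slowly expanding ball $B^{(\lfloor i^{1/3}/20\log\Delta(G)\rfloor)}(U)$, use sublinear expansion to get growth of order $e^{\Omega(i^{1/3})}$ that beats both the $e^{i^{1/3}/20}$ size of that ball and the polynomial loss to $F$, and join the traced paths at a common vertex; your index shift in $\rho$ and truncation at the first common vertex are harmless variants of the paper's choices (the paper uses minimality of $i+i'$). The bookkeeping you left open is done in the paper by proving $|A_i|\ge\min\{n/2+1,\ re^{i^{1/3}/3}/30\}$ (the prefactor $1/30$ handles the start, since $|A_0|$ may only be about $r/2$), bounding each step's net gain below by $|A_{i-1}|/(13\log^2(15|A_{i-1}|/r))$, and using monotonicity of $|A_j|$ to get $15|A_i|/r\ge\prod_{j<i}\bigl(1+1/(13\log^2(15|A_i|/r))\bigr)\ge e^{i/(26\log^2(15|A_i|/r))}$, which avoids needing any upper control on $|A_i|$.
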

\begin{proof}
    We write $d = d(G)$, $n = |V(G)|$.
    We note that for each $\ell \geq 0$ we have
    \[
        |B^{(\ell)}(U)| \leq |U| \cdot (\Delta(G))^{\ell} \leq r(\Delta(G))^{\ell}/10^3.
    \]

    Let now $A_0 = U_1 \setminus F$ and for each $i \geq 1$ we define
    \[
        A_i = A_{i-1} \cup \left( N(A_{i-1}) \setminus (F \cup B^{(\lfloor i^{1/3}/20\log \Delta(G)\rfloor)}(U)) \right).
    \]
    Note that by definition, for each $i \geq 0$ all vertices in $A_i$ are reachable from $U_1$ by a path that is drifting away from $U$ and disjoint from $F$.
    To show that the paths $P_1$ and $P_2$ exist, we will argue that $|A_i| > n/2$ for some $i$.
    By defining $A_i'$ similarly for $U_2$, we will get that $A_i \cap A_i' \neq \varnothing$ for some $i$ --- which immediately implies the existence of suitable $P_1$ and $P_2$.

    We will argue by induction that for all $i \geq 0$ we have
    \[
        |A_i| \geq \min \left\{ n/2 + 1, re^{i^{1/3}/3}/30 \right\}.
    \]
    To that end, we first notice that $|A_0| = |U_1 \setminus F| \geq r - r/10^{15} \geq r/2$, where we used our assumption on $F$.
    Now, assume that for some $i \geq 1$ we have shown the bound for $|A_{i-1}|$.
    Notice that the sequence $|A_i|$ is non-decreasing --- therefore if $|A_{i-1}| > n/2$ then the inductive step follows immediately.
    If $|A_{i-1}| \leq n/2$, then by the $r$-expansion of $G$ we get
    \begin{align*}
        |N(A_{i-1}) \setminus (F \cup B^{(\lfloor i^{1/3}/20\log \Delta(G) \rfloor)}(U))| &\geq \frac{|A_{i-1}|}{10\log^2 (15|A_{i-1}| / r)} - |F \cap B^{(i)}(U_1)| - |B^{(\lfloor i^{1/3} / 20 \log \Delta(G) \rfloor)} (U)|\\
        &\geq \frac{|A_{i-1}|}{10\log^2 (15|A_{i-1}| / r)} - 2r(i+1)^3/10^{15} - \frac{r (\Delta(G))^{i^{1/3}/20\log\Delta(G)}}{10^3}\\
        &\geq \frac{|A_{i-1}|}{13\log^2 (15|A_{i-1}| / r)},
    \end{align*}
    where we used that $A_i \subseteq B^{(i)}(U_1)$, our assumption on $F$ and that $|A_{i-1}| \geq re^{(i-1)^{1/3}/3}/30$.
    All in all, using that $|A_i|$ is non-decreasing with $i$, we get that
    \[
        15|A_i| / r \geq \prod_{j=0}^{i-1} \left(1 + \frac{1}{13\log^2(15|A_j|/r)} \right) \geq \prod_{j=0}^{i-1} \left(1 + \frac{1}{13\log^2(15|A_i|/r)} \right) \geq e^{i/(26\log^2 (15|A_i|/r))}.
    \]
    Taking logarithms gives $\log^3 (15|A_i|/r)\geq i/26$, i.e. $\log (15|A_i|/r)\geq i^{1/3}/3$, and so $|A_i| \geq re^{i^{1/3}/3}/30$.

    To conclude, we can define the sets $A_i'$ similarly for $U_2$.
    By the same argument, there must exist indices $i, i'$ minimizing $i + i'$ such that $A_i \cap A_{i'}' \neq \varnothing$.
    Now take an $x \in A_i \cap A_{i'}'$.
    By definition, there exist paths $P_1$ from $U_1$ to $x$ and $P_2$ from $U_2$ to $x$, which are both drifting-away from $U$.
    By the minimality of $i+ i'$, they don't intersect, so $P_1P_2^{-1}$ is indeed a path. 
    In particular, $P_1$ and $P_2$ satisfy the conditions of the lemma.
\end{proof}

\subsection{Finding an induced collection of disjoint paths}

As sketched above, to find an induced subdivision of $K_h$ in a $K_{2,t}$-free (or $C_{2k}$-free) expander, we would like to first fix the branch vertices $v_1, \dots, v_h$ and then iteratively connect each pair of them using Lemma~\ref{lemma:drifting_away_paths}. In this section, we give a lemma which can be used to connect a collection of $h$ sets, which one can think of as the neighbourhoods of $v_1, \dots, v_h$ using paths in a way that forms a subdivision. We first state the lemma, and then indicate how we will apply it.

\begin{lemma}\label{lemma:collection_of_paths}
    Let $h, r \geq 1$, $a \geq 16h$ and let $G$ be an $r$-expander for $r \geq 10^{23}h^2\Delta(G)\log^3\Delta(G)$.
    Let moreover $U \subseteq V(G)$ and  $S_1, \dots, S_{2h} \subseteq U$ be disjoint sets such that for each $i \in [2h]$
    \begin{enumerate}
        \item $|U| \leq r/(10^{16})$ and $|S_i| \geq a$, 
        \item any subset $S \subseteq S_i$ with $|S| \geq |S_i| /2$ has $|N(S)| \geq r$, \emph{and}
        \item for each $v \in \left(V(G) \setminus U\right) \cup S_1 \cup \dots \cup S_{2h}$ we have $|N(v) \cap (S_1\cup \dots \cup S_{2h})| \leq ha/(10^{8}h^2\log^3\Delta(G))$.
    \end{enumerate}
    Then, there exists a subset $I \subseteq [2h]$ of size $h$ and a collection of disjoint paths $\{P_{i, j} : \{i, j\} \in \binom{I}{2}\}$ such that $P_{i, j}$ is a path from $S_i$ to $S_j$, $|P_{i, j} \cap U| =2$ and the only edges spanned by $\bigcup_{i, j} V(P_{i, j})$ are $\bigcup_{i, j} E(P_{i, j})$.
\end{lemma}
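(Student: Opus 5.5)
We build the paths greedily, one pair at a time, and use Lemma~\ref{lemma:drifting_away_paths} for each connection. All paths are taken drifting away from $U$. Each path meets $U$ only at its two endpoints: the first vertex lies in some $S_i$, and the first step leaves $U$. We keep a forbidden set $F$ consisting of the closed neighbourhoods of all interior vertices of the paths built so far, together with the closed neighbourhoods of their endpoints outside the endpoint itself. We also maintain, for each $i$, a set $S_i^{\mathrm{av}}\subseteq S_i$ of still available start vertices. A vertex $x\in S_i$ is available if it is not yet an endpoint, lies outside $F$, and has no neighbour among previously used vertices.

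The role of $I$ is to absorb bad indices. We process pairs in some order, and whenever some $S_i$ has lost more than half of its vertices (i.e.\ $|S_i^{\mathrm{av}}|<|S_i|/2$), we discard the index $i$. We must show that at most $h$ indices are ever discarded, so that some $I$ of size $h$ survives with all its pairs connected.

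\textbf{Connecting one pair.} Suppose $i,j$ are both alive. Apply Lemma~\ref{lemma:drifting_away_paths} with $U_1=N(S_i^{\mathrm{av}})\setminus U$ and $U_2=N(S_j^{\mathrm{av}})\setminus U$ (in practice we first take a path starting in $N(S_i^{\mathrm{av}})$), with the set $U$ as the drifting-away centre, and with $F$ together with $B^{(1)}(U)$ forbidden near the start. Condition~2 gives $|N(S)|\ge r$, so after subtracting the small sets $U$ and $F\cap B^{(1)}(U)$ we still have $|U_1|,|U_2|\ge r/2$. We would phrase the lemma with $r/2$, or apply it to slightly adjusted sets.

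The key check is the hypothesis $|B^{(\ell)}(U_1\cup U_2)\cap F|\le r(\ell+1)^3/10^{15}$. Since $U_1\cup U_2\subseteq B^{(1)}(U)$, it suffices to bound $|B^{(\ell+1)}(U)\cap F|$. Each previous path is drifting away from $U$, so it has at most $(20\log\Delta)^3(\ell+3)^3$ vertices in $B^{(\ell+2)}(U)$. Only those vertices contribute neighbourhoods to $B^{(\ell+1)}(U)$. With at most $h^2$ paths, we get
\[
|B^{(\ell+1)}(U)\cap F|\le h^2\,\Delta\,(20\log\Delta)^3(\ell+3)^3,
\]
which is at most $r(\ell+1)^3/10^{15}$ by the assumption $r\ge 10^{23}h^2\Delta\log^3\Delta$.

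We then prepend $x\in S_i^{\mathrm{av}}$ and append $y\in S_j^{\mathrm{av}}$ to the resulting path. The path avoids $F\supseteq B^{(1)}(\text{old interiors})$ and is disjoint from $U$ except at $x$ and $y$. We shortcut it to an induced path; since the previous paths' closed neighbourhoods are forbidden, there are no edges to earlier paths. The remaining possible edges between the new path and $S$-vertices or endpoints are exactly what the availability definition excludes.

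\textbf{The main obstacle: bounding the damage to each $S_i$.} A new path kills a vertex of $S_i$ only in three ways: the vertex becomes an endpoint, it is adjacent to a vertex of the path, or it is adjacent to a newly forbidden vertex. By condition~3, each path vertex outside $U$ or in $\bigcup S$ has at most $ha/(10^8h^2\log^3\Delta)$ neighbours in $\bigcup S_k$. However, a path vertex may be far from $U$ yet still adjacent to $S$. Because the path is drifting away, only the first $O(\log^3\Delta)$ vertices lie in $B^{(2)}(U)$, and only vertices in $B^{(1)}(U)\supseteq N(S)$ can hurt. So each path destroys at most
\[
O(\log^3\Delta)\cdot \frac{ha}{10^8h^2\log^3\Delta}\le \frac{a}{10^6h}
\]
vertices of $\bigcup S_k$. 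The same count handles second-neighbour effects if $F$ is restricted to neighbourhoods, with one more application of condition~3 through vertices outside $U$.

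Over at most $\binom{2h}{2}$ paths, the total loss is at most $4h^2\cdot a/(10^6h)\le a h/10^5$, plus the endpoints. Each alive index uses at most $2h$ endpoints, and $2h\le a/8$ since $a\ge 16h$. The total loss is therefore much less than $h\cdot a/2$, which is what it would take for $h$ sets (each of size at least $a$) to each lose half their vertices. Hence fewer than $h$ indices are discarded, and the surviving $h$ indices form $I$. Getting this accounting right, especially how the drifting-away property bounds the number of path vertices adjacent to $S$, is where I expect the real care is needed.
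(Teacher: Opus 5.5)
Your proposal is correct and follows the paper's proof essentially step for step. You greedily connect pairs via Lemma~\ref{lemma:drifting_away_paths}, drifting away from $U$ with the closed neighbourhood of all used vertices (plus $U$) forbidden, discard any $S_i$ that has lost half its vertices, and bound the total loss in $\bigcup_k S_k$ by the at most $4h^2\le ha/4$ endpoints plus condition~3 applied to the $O(h^2\log^3\Delta(G))$ path vertices lying in $B^{(1)}(U)$. A few slips are harmless but worth fixing: forbid $F\cup U$ rather than $B^{(1)}(U)$, which contains your start sets; drop the ``second-neighbour'' killing, which is unnecessary and which condition~3 would not control anyway; and note that the per-path loss is about $10^{-3}a/h$ rather than $a/(10^6h)$, which still suffices.
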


In the proof of the lemma, we find the required paths one by one, at each step making sure to avoid all neighbours of previously used vertices -- which will guarantee that we indeed find an induced collection of paths. To make sure that we have enough room to avoid the forbidden vertices, we require that the sets $S_i$ are relatively large (condition 1), sparse (condition 3) and that they robustly have many neighbours (condition 2).

During the process of finding the paths, it might happen that we end up using a large portion of some $S_i$ before we get the chance to find all the paths originating from it. Therefore, we might not be able to connect up all pairs of the $S_i$'s. To circumvent that, we will instead start with $2h$ instead of $h$ candidates and argue that we can overuse at most $h$ sets $S_i$.

Let us now say a couple of words about how we apply this lemma, in the case of $K_{2, t}$-free graphs. The main idea is to choose the sets $S_i$ as subsets of the neighbourhoods of the branch vertices $v_1, \dots, v_{2h}$, and set $U=\bigcup_{i=1}^{2h}N(v_i)$. Note that $K_{2, t}$-freeness ensures that $|N(v)\cap S_i|\leq t-1$ for any $v\neq v_i$, ensuring the third condition, while the first two condition will be ensured almost by definition.

In the case of $C_{2k}$-free expanders, we will take the sets $S_i$ to be the subsets of the second neighbourhoods of $v_i$, and so we still need to make sure that we can route the endpoints of the paths through the first neighbourhood to the respective $v_i$. This requires special care, and will be done in the next subsection, alongside some additional cleaning steps. 

\begin{proof}[Proof of Lemma~\ref{lemma:collection_of_paths}.]
    We let $G$ together with sets $S_1, \dots, S_{2h} \subseteq U \subseteq V(G)$ satisfy the conditions of the lemma and write $d = d(G)$.
    We want to find the paths connecting $S_1, \dots, S_{2h}$ one by one.
    During the process, we discard some of the sets if they lose too many vertices.
    We need to argue that we can find a suitable path in each step and that we discard at most $h$ of the sets, so that at least $h$ sets remain at the end of our process.

    More formally, for each pair $\{i, j\} \in \binom{[2h]}{2}$ we want to find an induced path $P_{i,j}$ such that
    \begin{enumerate}
        \item $P_{i, j}$ is a path whose only vertices in $U$ are its endpoints $x \in S_i$ and $x' \in S_j$ and which satisfies $|B^{(\ell)}(U)\cap V(P_{i, j})|\leq 2(20\log \Delta(G))^3 (\ell+1)^3+2$ for all $\ell\geq 0$, \emph{and}
        \item for $\{i, j\} \neq \{i', j'\}$, there are no edges crossing between $P_{i', j'}$ and $P_{i, j}$.
    \end{enumerate}
    We note that these conditions guarantee that the collection of paths we find is an induced collection of disjoint paths in $G$.

    We attempt to find the paths one by one --- in each step we pick a not-yet connected pair $\{i, j\}$ and find the path $P_{i, j}$.
    We let $\mathcal{P}_q$ denote the set of $q$ paths found up until step $q$, let $V_q = \bigcup_{P \in \mathcal{P}_q} P$ and $F_q$ be the set of neighbours of some $v\in V_q$ together with $V_q$ itself. When finding the next path, we make sure that it is disjoint from $F_q$, which makes sure that the second condition is satisfied.

    We say that a set $S_i$ becomes overused at step $q$ if $|S_i \cap F_q| \geq |S_i|/2$. If a set $S_i$ becomes overused, we immediately give up on it --- we will no longer find paths connecting it to the other vertices.

    Before arguing that we can indeed find a suitable path in each step, we first show that at any point the number of overused sets is at most $h$ --- and hence at the end of the process there will be at least $h$ of them left for us to define the set $I$.

    By the first property of the paths $P_{i, j}$, for each $\ell \geq 0$ the number of vertices in $V_q$ at distance at most $\ell$ from $U$ is at most
    \[
        2q(\ell+1)^3  \cdot 8000\log^3\Delta(G)  + 4q \leq 10^5(\ell+1)^3h^2\log^3\Delta(G).
    \]
    
    Note that by the third condition in the lemma and our bound on the number of vertices in $V_q$ at distance at most $1$ to $U$, we have
    \[
        |F_q \cap \left(S_1 \cup \dots \cup S_{2h} \right)| \leq 2q + \left(2q + 8\cdot 10^{5}h^2\log^3\Delta(G)\right) \cdot \frac{ha}{10^{8}h^2\log^3\Delta(G)} \leq ha/2.
    \]
    Since $S_i \subseteq U$, $|S_i| \geq a$ for all $i \in [2h]$ and the sets $S_i$ are pairwise disjoint, we can bound the number of overused sets at any step by $|F_q \cap \left(S_1 \cup \dots \cup S_{2h} \right)|/(a/2) \leq h$.

    Suppose that for some $0\leq q < \binom{2h}{2}$ we have found a collection $\mathcal{P}_{q}$ of paths satisfying the above conditions and let $\{i, j\} \in \binom{[2h]}{2}$ be a pair such that both $S_i$ and $S_j$ are not overused at step $q$ and that we have not found the path $P_{i, j}$ yet.
    Since $S_i$ is not overused, we have $|S_i \setminus F_{q}|\geq |S_i|/2$, and so taking $U_i = N(S_i\backslash F_q)$ we get that $|U_i| \geq r$ by our second assumption in the lemma.
    We define $U_j$ similarly for $S_j$ and again get that $|U_j| \geq r$.

    We now apply Lemma~\ref{lemma:drifting_away_paths} with the sets $U_i$ and $U_j$ and $F = F_{q} \cup U$.
    We note that, using the above observations, for each $\ell \geq 0$ the number of vertices in $F$ at distance at most $\ell$ from $U_i \cup U_j$ is at most
    \[
        |U| + 10^5(\ell+2)^3h^2\Delta(G)\log^3\Delta(G) \leq \frac{r}{10^{16}} + \frac{r + r\ell^3}{10^{16}} \leq \frac{r + r\ell^3}{10^{15}},
    \]
    where we used our assumption on $r$ and the size of $U$.
    In particular, the premise of Lemma~\ref{lemma:drifting_away_paths} is satisfied.
    We thus get paths $P_1, P_2$ that are disjoint from $F$ and drifting-away from $U$ such that $P_1P_2^{-1}$ is a path from $U_1$ to $U_2$.
    We pick $x_1 \in N(P_1) \cap S_i\backslash F_q$ and $x_2 \in N(P_2) \cap S_j\backslash F_q$.
    Since the induced subgraph on $V(P_1)\cup V(P_2)\cup \{x_1, x_2\}$ is connected, we may take $P_{i, j}$ to be the shortest path between $x_1$ and $x_2$ in this subgraph, and note it must be induced due to minimality. Finally, this path satisfies the first property since $P_1$ and $P_2$ are drifting away from $U$ and so \[|B^{(\ell)}(U)\cap V(P_{i, j})|\leq |B^{(\ell)}(U)\cap V(P_1)|+|B^{(\ell)}(U)\cap V(P_2)|+2\leq 2(20\log \Delta(G))^3 (\ell+1)^3+2.\]
    
    At the end of the process, we have found a suitable path between all pairs of non-overused sets.
    Since there are at least $h$ non-overused sets, we can pick $I \subseteq [2h]$ of size $h$ such that the corresponding subset of the paths satisfies the conditions of the lemma.
\end{proof}

\subsection{Finding induced subdivisions}
In this section, we use Lemma~\ref{lemma:collection_of_paths} to prove the following two lemmas, which say that we can find an induced subdivision of $K_h$ in nearly-regular expanders.
\begin{lemma}\label{lemma:k2t_induced_subdivisions}
    For each $K$ there exists a constant $C_{\ref{lemma:k2t_induced_subdivisions}}$ such that the following holds for all $h, t \in \mathbb{N}$.
    Let $G$ be a $K_{2, t}$-free graph with $d(G) \geq C_{\ref{lemma:k2t_induced_subdivisions}}th^2\log^3(ht)$ and $\Delta(G) \leq Kd(G)$.
    Then, $G$ contains an induced proper subdivision of $K_h$.
\end{lemma}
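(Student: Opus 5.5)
We first pass to an $r$-expander via Theorem~\ref{theorem:pass_to_expander}, obtaining an induced subgraph $G'\subseteq G$ with $d' = d(G')\ge d(G)/2$, $\delta(G')\ge d'/2$ and $\Delta(G')\le \Delta(G)\le Kd(G)\le 2Kd'$. Since $G'$ is induced, it is still $K_{2,t}$-free, and an induced subdivision in $G'$ is induced in $G$. We will choose $r$ of order $d'^2/(10^{18}t)$ and check that $r\ge 10^{23}h^2\Delta\log^3\Delta$. Since $\Delta=O_K(d')$, this needs $d'\gtrsim_K t h^2\log^3 d'$. The hypothesis gives $d'\gtrsim C t h^2\log^3(ht)$, but we only have $\log^3 d'$ in place of $\log^3(ht)$. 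To handle this, we first reduce $d$: taking a random induced subgraph at rate $p$ with $pd\approx C th^2\log^3(ht)$ and deleting high-degree vertices (as in the proof of Lemma~\ref{lemma:reduction_to_k2t}) keeps the graph $K_{2,t}$-free and nearly regular with $d=\Theta(Cth^2\log^3(ht))$. Then $\log d = O(\log(ht)+\log C)$, so the required inequality holds once $C$ is large in terms of $K$.

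Next we choose the branch vertices $v_1,\dots,v_{2h}$ greedily so that their closed neighbourhoods are pairwise far apart, say with disjoint balls of radius $3$. Such a ball has size $O(\Delta^3)$, which is tiny compared with $n\ge r$. We then set $U=\bigcup_i N(v_i)\cup\{v_i\}$, which has size $O(h\Delta)\le r/10^{16}$. We take $S_i\subseteq N(v_i)$ to be a large independent subset, of size $a\ge d'/(4t)$ say. This is available because $G'[N(v)]$ has maximum degree below $t$ by $K_{2,t}$-freeness, and $a\ge 16h$. The subsets $S_i\subseteq U$ are then disjoint.

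We now verify the three conditions of Lemma~\ref{lemma:collection_of_paths}.
\begin{enumerate}
\item The bounds $|U|\le r/10^{16}$ and $|S_i|\ge a$ hold by construction.
\item Let $S\subseteq S_i$ with $|S|\ge a/2$. Each vertex of $S$ has at least $d'/2$ neighbours, and any two vertices of $S$ share at most $t-1$ neighbours. Inclusion--exclusion over a subset of $S$ of size about $d'/t$ then gives $|N(S)|\gtrsim (d'/t)\cdot d'/4$. This is at least $r$ for our choice of $r$; the choice of $r$ is made precisely to fit this bound.
\item Any $v\ne v_i$ has at most $t-1$ neighbours in $N(v_i)$. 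Since the balls around the $v_j$ are far apart, $v$ meets at most one $N(v_j)$. Hence $|N(v)\cap \bigcup S_j|\le t$, and we need $t\le ha/(10^8h^2\log^3\Delta)$, i.e.\ $d'\gtrsim t^2 h\log^3\Delta$. This is the delicate point: it is not implied by $d'\gtrsim th^2\log^3$ when $t\gg h$.
\end{enumerate}

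To fix the third condition, we thin $S_i$ further by a random subset of rate $\approx 1/t$. This lowers the typical neighbourhood count to $O(\log)$ via Chernoff and a union bound over the polynomially many relevant $v$ in $B^{(1)}(U)$. Alternatively, we take $S_i=N(v_i)$ itself without independence and accept $a\approx d'/2$. In that case condition~3 reads $t\le d'/(2\cdot 10^8h\log^3\Delta)$, which holds since $d'\gtrsim th^2\log^3$ with $h\ge1$. Condition~2 is unaffected, since we use $\delta$ together with codegree at most $t-1$. We plan to take this simpler route, with $S_i=N(v_i)$ and $U=\bigcup_i N(v_i)$.

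Lemma~\ref{lemma:collection_of_paths} then yields $I$ of size $h$ and an induced collection of paths $P_{i,j}$ from $S_i$ to $S_j$ meeting $U$ only at their endpoints. We attach $v_i$ to each path endpoint lying in $N(v_i)$. The path interiors avoid $U$, so no extra edges arise to the $v_i$ beyond the endpoints, and the $v_i$ are pairwise nonadjacent because they are far apart. Each edge of $K_h$ is subdivided at least once, since the endpoints lie in $N(v_i)$ and $N(v_j)$. This gives an induced proper subdivision of $K_h$.

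The main obstacle is making the parameters consistent: the degree reduction controlling $\log\Delta$ and the choice of $r$ versus the expansion bound in condition~2. One further point requires care. Endpoints of different paths may both lie in the same $N(v_i)$, so we must check that the induced-ness in Lemma~\ref{lemma:collection_of_paths} already excludes edges among these endpoints; it does, since $U$-vertices of the paths are included in the induced condition.
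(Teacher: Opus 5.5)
Your overall architecture matches the paper's: pass to an $r$-expander with $r\asymp d'^2/t$, take $S_i\subseteq N(v_i)$ and $U=\bigcup_i B^{(1)}(v_i)$, apply Lemma~\ref{lemma:collection_of_paths}, and attach the $v_i$. Your inclusion--exclusion check of condition~2 is also fine. There is, however, a genuine gap in the choice of branch vertices.

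You want branch vertices with pairwise disjoint balls of radius $3$, justified by comparing the ball size $O(\Delta^3)$ with $n\ge r$. But $r\asymp d'^2/t$ while $\Delta^3\asymp K^3d'^3$, so the comparison goes the wrong way; a greedy choice would even have to avoid balls of radius $6$. Nothing forces $n$ to exceed $\Theta(d'^2/t)$. Take $t=2$ and $G$ the Erd\H{o}s--R\'enyi polarity graph, which is $C_4$-free, nearly regular, and has $n\approx d^2$. In any induced subgraph of it with minimum degree $\Omega(d)$, every radius-$2$ ball has $\Omega(d^2)$ vertices, so only $O(1)$ vertices can have pairwise disjoint radius-$2$ balls. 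Your preliminary degree reduction does not help: when $d$ is already near the threshold, there is nothing to reduce.

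Your final route needs only pairwise distance at least $3$. That gives disjoint sets $S_i=N(v_i)$, and ensures no endpoint of $P_{i,j}$ in $S_i$ is adjacent to another branch vertex $v_{i'}$. Even this is unjustified: a greedy choice must avoid radius-$2$ balls, which can have size comparable to $n$. It is simply false for diameter-$2$ graphs such as the polarity graph itself. Without it, both the disjointness hypothesis of Lemma~\ref{lemma:collection_of_paths} and the inducedness of the final subdivision fail.

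The repair, which is what the paper does, is to require only that the $v_i$ be pairwise non-adjacent. This is possible since $2h(\Delta+1)<n$. Then set $S_i=N(v_i)\setminus\bigcup_{j\neq i}N(v_j)$.
\begin{itemize}
\item By $K_{2,t}$-freeness this discards fewer than $2ht$ vertices, so $|S_i|\ge\delta(G')-2ht\ge d'/4$.
\item The $S_i$ are disjoint, and no vertex of $S_i$ is adjacent to $v_j$ for $j\neq i$.
\item Condition~3 now only has the crude bound $|N(v)\cap\bigcup_j S_j|\le 2ht$ instead of $t$. With $a=d'/4$ it still only requires $d'\gtrsim th^2\log^3\Delta$, which is the hypothesis.
\end{itemize}

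Separately, your random degree reduction is unnecessary. Since $x\mapsto x/\log^3 x$ is increasing, $d\ge Cth^2\log^3(ht)$ already gives $d\gtrsim_K th^2\log^3(Kd)$ once $C$ is large in terms of $K$. This is exactly how the paper fixes its constant.
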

\begin{lemma}\label{lemma:c2k_induced_subdivisions}
    For each $K, k \geq 3$ there exists a constant $C_{\ref{lemma:c2k_induced_subdivisions}}$ such that the following holds for all $h \in \mathbb{N}$.
    Let $G$ be a $C_{2k}$-free graph with $d(G) \geq C_{\ref{lemma:c2k_induced_subdivisions}}h\log^3h$ and $\Delta(G) \leq Kd(G)$.
    Then, $G$ contains an induced proper subdivision of $K_h$.
\end{lemma}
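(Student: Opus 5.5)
We prove Lemma~\ref{lemma:c2k_induced_subdivisions} by reducing it to Lemma~\ref{lemma:collection_of_paths}. The sets $S_i$ will be large subsets of second neighbourhoods of $2h$ branch vertices $v_1,\dots,v_{2h}$. We then route the two endpoints of each path through the first neighbourhoods back to the $v_i$. (Lemma~\ref{lemma:k2t_induced_subdivisions} is handled the same way with first neighbourhoods, and there $K_{2,t}$-freeness gives condition 3 directly.)

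\textbf{Step 1: pass to an expander.} Apply Theorem~\ref{theorem:pass_to_expander} to $G$ with
\[
r=10^{23}h^2\Delta\log^3\Delta, \qquad \Delta:=\Delta(G)\le Kd(G).
\]
This gives an induced $r$-expander $G'$ with $d(G')\ge d(G)/2$, $\delta(G')\ge d(G')/2$ and $\Delta(G')\le 2Kd(G')$. Write $d=d(G')$. With $d\ge C h\log^3 h$ we get $r=\Theta_K(h^2 d\log^3 d)$, which is $O(d^3/C)$. So for $C$ large, $r$ is much smaller than $d^3$ and much larger than $h d^2$ (roughly $h\,d^2$ times $\log$'s versus $C d^2$).

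\textbf{Step 2: choose the branch vertices and the sets $S_i$.} Greedily pick $v_1,\dots,v_{2h}$ pairwise at distance at least $7$; this is possible because each choice excludes at most $\Delta^6$ vertices, and $|V(G')|\ge r$ is large. In each $N(v_i)$ take an independent set $W_i$ of size at least $d/(8k)$. This exists because $G'[N(v_i)]$ is $P_{2k-1}$-free, so by Erd\H{o}s--Gallai it has average degree $O(k)$. Let $S_i$ consist of second neighbours $x$ of $v_i$ that are not adjacent to $v_i$ and have exactly one neighbour $w(x)\in W_i$. Then $U=\bigcup_i (N(v_i)\cup S_i)$ has size $O(hd^2)\le r/10^{16}$, and we take $a=\min_i|S_i|$.
\begin{itemize}
\item \emph{$a$ is large.} The non-private part of $N(W_i)$ comes from vertices with two neighbours in $W_i$. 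Using $C_{2k}$-freeness (a Bondy--Simonovits type count on the bipartite graph $W_i$ versus $N(W_i)$), I would aim to show that these, together with $N(v_i)$, cover only $o(d^2)$ vertices. This gives $a=\Omega_k(d^2)$.
\item \emph{Condition 3 is free.} Every vertex has at most $\Delta\le 2Kd$ neighbours, while $a/(10^8h\log^3\Delta)=\Omega(d^2/(h\log^3 d))\ge 2Kd$ once $C$ is large.
\end{itemize}

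\textbf{Step 3: routing the endpoints.} Lemma~\ref{lemma:collection_of_paths} returns paths $P_{i,j}$ from $x\in S_i$ to $x'\in S_j$ whose only vertices in $U$ are their endpoints, with no edges between different paths. We extend each path by $x\,w(x)\,v_i$ and $x'\,w(x')\,v_j$. To keep the result induced:
\begin{itemize}
\item Each $w\in W_i$ should serve at most one path. So we put into the forbidden set all of $S_i\cap N(w)$ once $w$ has been used, and the $w$'s adjacent to used vertices.
\item The vertices $w$ must have no neighbours on other paths. Since each path meets $B^{(1)}(U)$ in $O(\log^3\Delta)$ vertices, these vertices and their neighbours are only $O(h^2 d\log^3 d)$ in number, and we forbid them too.
\end{itemize}
All of this is absorbed into the overuse bookkeeping, because each step forbids $O(d\log^3 d)$ vertices of $S_1\cup\dots\cup S_{2h}$ and this is $\ll a$. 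Concretely, I would run the greedy process of Lemma~\ref{lemma:collection_of_paths} with this enlarged forbidden set rather than invoking the lemma as a black box.

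\textbf{Main obstacle: condition 2.} The hard part is showing that every $S\subseteq S_i$ with $|S|\ge |S_i|/2$ satisfies $|N(S)|\ge r\approx h^2d\log^3 d$. Plain expansion only gives about $d^2/\log^2 d$, which is far too small. Since $C_4$ is not forbidden, third neighbourhoods need not look like a tree, so we need real growth of order $d^3$ there, or at least $h^2d\log^3 d$ up to $\log$'s. My first attempt would be to refine $S_i$ further:
\begin{itemize}
\item keep only those $x$ for which at least $d/4$ neighbours $y$ lie outside $B^{(2)}(v_i)$;
\item for each $w$, keep only the children $x$ of $w$ whose outward neighbourhoods overlap little with those of the other children.
\end{itemize}
Then any $y$ adjacent to children of two different $w,w'\in W_i$ closes a $6$-cycle $y\,x\,w\,v_i\,w'\,x'\,y$. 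For general $k$, a longer cycle can be closed by adding a detour through further private neighbours. This should bound the number of $w$'s adjacent to each $y$ by $O(k)$. The remaining difficulty is multiplicity within a single $w$. There I would use $C_{2k}$-freeness of the bipartite graph between $N(w)\cap S_i$ and its outside neighbourhood to show that a half of $S_i$ still sees $\Omega_k(d^3/\mathrm{polylog})$ vertices. If that fails, I would fall back to shrinking $S_i$ so that the children of each $w$ have nearly disjoint neighbourhoods, at the cost of a smaller $a$, and recheck condition 3.
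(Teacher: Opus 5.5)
\textbf{There is a genuine gap, and it sits where you expected: condition~2 of Lemma~\ref{lemma:collection_of_paths} is never established.} Your sketch does not bound $|N(S)|$. Closing the cycle $y\,x\,w\,v_i\,w'\,x'\,y$ is a $6$-cycle, so it is forbidden only when $k=3$, and the multiplicity issue inside one $w$ is left open.

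The missing idea is Lemma~\ref{lemma:no_long_path}. In a $C_{2k}$-free graph, $G[N^{(2)}(v),N^{(3)}(v)]$ contains no $P_{4k-6}$. Likewise $G[N(v)]$, $G[N(v),N^{(2)}(v)]$ and $G[N^{(2)}(v)]$ contain no long paths. So by Erd\H{o}s--Gallai every subgraph of these has only $O(k)$ times as many edges as vertices. The argument then runs as follows:
\begin{itemize}
\item Delete from $N^{(2)}(v_i)$ the vertices with more than $O_{K,k}(1)$ neighbours in $N(v_i)\cup N^{(2)}(v_i)$. The same path-freeness shows there are few of them.
\item Every remaining $x$ then sends at least $d/2-O_{K,k}(1)$ edges into $N^{(3)}(v_i)$.
\item A global edge count gives $|N(S)\cap N^{(3)}(v_i)|=\Omega(|S|d/k)=\Omega(d^3/k^2)$ for every $S\subseteq S_i$ with $|S|\ge |S_i|/2$. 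No per-vertex multiplicity control is needed.
\end{itemize}

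Your parameter choice also hides how much condition~2 must deliver. With $r=10^{23}h^2\Delta\log^3\Delta$ you get $r/(hd^2)=O_K(h\log^3 d/d)$, which is tiny once $C$ is large. So $r$ is much \emph{smaller} than $hd^2$, not larger. Condition~1, $|U|\le r/10^{16}$, therefore fails, since $U\supseteq S_1\cup\dots\cup S_{2h}$ has size $\Omega_k(hd^2)$. Condition~1 forces $r\ge 10^{16}|U|\ge 2\cdot 10^{16}ha$. Hence condition~2 must make half of each $S_i$ expand by a factor $\Omega(h)$. Sublinear expansion cannot give this; the $d^3$-growth above can, and the paper accordingly takes $r=\Theta(d^3/k^2)$.

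\textbf{Three further steps fail as written.}

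\emph{Distance-$7$ branch vertices.} Picking $v_1,\dots,v_{2h}$ pairwise at distance $\ge 7$ is impossible in general. Take $k=3$ and let $G$ be the incidence graph of a generalized quadrangle: it is $C_6$-free and $(q+1)$-regular with $n\approx 2q^3$. Every radius-$3$ ball in $G'$ then has size $\Omega(q^3)$, so only $O(1)$ vertices can be pairwise at distance $\ge 7$. The paper instead keeps $v_i$ outside $B^{(2)}(v_j)\cup D_j$, where $D_j$ is the set of vertices with $\ge 4k$ neighbours in $N^{(2)}(v_j)$. Both sets have size $O(d^2)$, again by Lemma~\ref{lemma:no_long_path}. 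It then prunes the remaining small overlaps.

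\emph{Private neighbours in $W_i$.} Your claim that most second neighbours have a private neighbour in an arbitrary independent $W_i$ is unproven and can fail. For $k\ge3$, two neighbours of $v_i$ may share $\Theta(d)$ common neighbours, because $K_{2,t}$ has only $4$-cycles. If $W_i$ consists of such pairs, the private part can be empty.

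\emph{Routing through $W_i$.} Your $U$ omits the rest of $N^{(2)}(v_i)$. So interior path vertices can be adjacent to many routers $w$, and forbidding those can exhaust $W_i$.

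The paper avoids all three problems in two moves. First, it takes $U=\bigcup_i B^{(2)}(v_i)$, so interior path vertices never touch $N(v_i)$. Second, it adds conflict edges inside each $S_i$: $x,x'\in S_i$ are joined whenever some $y,y'\in I_i$ with $y=y'$ or $yy'\in E$ satisfy $yx,y'x'\in E$. Properties~1 and~2 of Claim~\ref{claim:c2k_branch_vertices} keep the maximum degree of this auxiliary graph at $O_{K,k}(d)$. Lemma~\ref{lemma:collection_of_paths} is then applied to it as a black box. Each resulting path can be joined to $v_i$ through any neighbour in $I_i$ without creating chords.
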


The proof of the $K_{2,t}$-free case is rather straightforward --- we pick the branch vertices $v_1, \dots, v_{2h}$ while making sure that, after some small cleaning, their first neighbourhoods $S_1, \dots, S_{2h}$ are disjoint.
By $K_{2,t}$-freeness, the degrees into each $S_i$ are at most $t-1$.
Hence, we will get the collection of paths from Lemma~\ref{lemma:collection_of_paths}, which will then directly yield an induced subdivision of $K_h$.

\begin{proof}[Proof of Lemma~\ref{lemma:k2t_induced_subdivisions}]
    Fix $K \geq 1$.
    We let $C_{\ref{lemma:k2t_induced_subdivisions}} \geq 10^{18}K$ be large enough compared to $K$ such that for all $h, t \in \mathbb{N}$ and all $d_0 \geq C_{\ref{lemma:k2t_induced_subdivisions}} th^2\log^3ht$ we have
    \[
        \frac{d_0^2}{128t} \geq 10^{23}h^2(Kd_0)\log^3(Kd_0).
    \]
    In the following, we fix $h, t \in \mathbb{N}$ and let $G$ be a graph satisfying the conditions of the lemma.

    First, we let $r = (d(G)/2)^2/(32t) \geq 10^{23}h^2\Delta(G)\log^3\Delta(G)$.
    By Theorem~\ref{theorem:pass_to_expander} there exists an induced $G' \subseteq G$ which is an $r$-expander such that $d(G') \geq d(G)/2 \geq (C_{\ref{lemma:k2t_induced_subdivisions}}/2)th^2\log^3(ht)$ and $\delta(G') \geq d(G')/2$.
    We write $n = |V(G')|$, $d = d(G')$ and note that $n \geq d^2/(4t)$ --- otherwise $G'$ would contain a $K_{2, t}$ by the K\H{o}vari-S\'os-Tur\'an theorem. Let us also note that $\Delta(G')\le 2Kd$. 

    We begin by picking $v_1, \dots, v_{2h} \in V(G')$ arbitrarily at pairwise distance at least 2, which will serve as our branch vertices (this is possible, since $2h(\Delta(G')+1) \leq 5Khd < d^2/(4t)$).
    Note moreover that for every $i$ and every $v \in V(G')\backslash \{v_i\}$ we have $|N_{G'}(v) \cap N_{G'}(v_i)| <t$ by $K_{2, t}$-freeness.
    Therefore, we can pick subsets $S_i \subseteq N_{G'}(v_i)\backslash \bigcup_{j\neq i}N_{G'}(v_j)$ for all $i \in [2h]$ such that $|S_i| \geq \delta(G') - 2ht \geq d/4 \geq 16h$ and $S_1, \dots, S_{2h}$ are pairwise disjoint.
    These will be the starting sets for the induced paths connecting the branch vertices.

    Now, let $U = \bigcup_{i \in [2h]}B^{(1)}_{G'}(v_i)$ and note that we have $|U| \leq 2h(1 +\Delta(G')) \leq r/(10^{16})$.
    Moreover, for each $i \in [2h]$ and set $S \subseteq S_i$ with $|S| \geq |S_i|/2 \geq d/8$ we have
    \[
        |N_{G'}(S) \setminus S_i| \geq |S|\cdot (\delta(G') - t)/t \geq d^2/(32t) \geq r,
    \]
    where we used that for all $v \in V(G')\backslash \{v_i\} $ we have $|N_{G'}(v) \cap S_i|\le |N_{G'}(v)\cap N_{G'}(v_i)| <t<\delta(G')/2$.
    Since additionally $|N_{G'}(w) \cap (S_1 \cup \dots \cup S_{2h})| \leq 2h \cdot t \leq h(d/4)/(10^8h^2\log^3\Delta(G))$ for all $w \neq v_1, \dots, v_{2h}$, by Lemma~\ref{lemma:collection_of_paths} there exists an $I \subseteq [2h]$ of size $h$ and an induced collection $\{P_{i, j} : \{i, j\} \in \binom{I}{2}\}$ of disjoint induced paths in $G'$ such that $P_{i, j}$ is a path from $S_i$ to $S_j$ such that $|P_{i, j} \cap U| = 2$.
    
    Since all the neighbours of $v_i$ are contained in $U$, and only vertices from $U$ used on the paths $P_{i, j}$ are their endpoints, this collection together with the vertices $\{v_i : i \in I\}$ induces a proper subdivision of $K_h$ in $G$ (as $G'$ is an induced subgraph of $G$). 
\end{proof}

In the $C_{2k}$-free case, we would like to take the second neighbourhoods as the sets $S_1, \dots, S_{2h}$ for Lemma~\ref{lemma:collection_of_paths}.
However, we need to be careful that we can later extend the paths given by the lemma to the respective $v_i$.
To achieve that, we will first clean the neighbourhoods around the $v_i$'s to ensure that 1) there are no vertices in $S_i$ having large codegree with $v_i$ and that 2) the sets $S_i$ are disjoint.
Then, we will invoke Lemma~\ref{lemma:collection_of_paths} on an auxiliary graph in which we add an edge between $x, y \in S_i$ if connecting both $x$ and $y$ to $v_i$ might not be possible in an induced way.
Since the paths we get form an induced collection of disjoint paths, it will in particular make sure that we will get no conflicts while connecting the endpoints to the respective $v_i$.

To do the cleaning, we will need to use that small balls around each vertex are sparse, in that they do not contain a long path. This is given by the following lemma.
\begin{lemma}\label{lemma:no_long_path}
    Let $k \geq 3$, let $G$ be a $C_{2k}$-free graph and let $v \in V(G)$.
    Then, there is no $P_{2k}$ in either $G[N(v)]$ or $G[N(v),N^{(2)}(v)]$. Additionally, there is no $P_{4k-8}$ in $G[N^{(2)}(v)]$ and no $P_{4k-6}$ in $G[N^{(2)}(v), N^{(3)}(v)]$.
\end{lemma}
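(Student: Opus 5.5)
In each of the four cases, a long path inside the ball around $v$ will be closed up through $v$ (and, for the outer layers, through the first two layers) into a cycle of length exactly $2k$. The basic tool is this: if $x,y$ are vertices in the ball joined by an internally disjoint path $Q$ of length $m$ avoiding the layers we route through, and $x,y$ can be joined to $v$ by paths through inner layers of combined length $\ell$, internally disjoint from $Q$ and from each other, then $Q$ plus these routes is a cycle of length $m+\ell$. So in each case we look for a subpath of the given long path of the right length, and for disjoint "return routes" to $v$.

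\textbf{$G[N(v)]$.} Suppose $x_0x_1\dots x_{2k}$ is a path in $N(v)$. Then $v x_0 x_1\dots x_{2k-2} v$ is a cycle of length $2k$, using $2k-1$ vertices of the path. This is a contradiction. (Even $P_{2k-2}$ suffices.)

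\textbf{$G[N(v),N^{(2)}(v)]$.} A path in this bipartite graph alternates between the two layers. Take a path $P_{2k}$; among any $2k-1$ consecutive vertices, pick a subpath $x_0\dots x_{2k-2}$ whose endpoints are both in $N(v)$. It has length $2k-2$, so $v x_0\dots x_{2k-2} v$ is a $2k$-cycle. A $P_{2k}$ has $2k+1$ vertices and so contains such a subpath starting at a vertex of $N(v)$.

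\textbf{$G[N^{(2)}(v)]$.} Every vertex $y\in N^{(2)}(v)$ has a parent $p(y)\in N(v)$. Take a long path $y_0y_1\dots$ in $N^{(2)}(v)$ and look for indices $i<j$ with $j-i=2k-4$ and $p(y_i)\neq p(y_j)$. Then $v\,p(y_i)\,y_i\dots y_j\,p(y_j)\,v$ has length $2+ (2k-4)+2=2k$. The parents lie in $N(v)$, so they are off the path and distinct, and this is a genuine cycle.

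The obstacle is when $y_i$ and $y_j$ can only be assigned the same parent, i.e. each of them has a unique neighbour in $N(v)$ and it is the same vertex $w$. Then try $j-i=2k-2$ instead, giving the cycle $w\,y_i\dots y_j\,w$ of length $2k$. Both configurations must fail along a path of length $4k-8$. I expect this case analysis (choosing which pair of indices to use, possibly using distinct parents for intermediate vertices) to be the main technical step. The length $4k-8$ allows $2k-3$ disjoint windows; pigeonholing on parents, or a chain argument ``$y_0$ vs $y_{2k-4}$ vs $y_{4k-8}$'', should force a usable pair. For instance, if $p(y_0)=p(y_{2k-4})=p(y_{4k-8})=w$ is forced, then $w y_{2k-4}\dots y_{4k-8}$ combined appropriately gives a cycle of the right length; one needs to check the small cases.

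\textbf{$G[N^{(2)}(v),N^{(3)}(v)]$.} Argue analogously. Pick endpoints in $N^{(2)}(v)$ at distance $2k-4$ along the alternating path, and route back through parents in $N(v)$ to $v$. Again handle coinciding parents (or grandparents) by shifting to an endpoint pair at distance $2k-2$ closed through a common parent in $N(v)$, or at distance $2k-4+2$ through a common grandparent. The path length $4k-6$ gives enough room for this shifting.
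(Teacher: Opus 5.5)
Your arguments for $G[N(v)]$ and $G[N(v),N^{(2)}(v)]$ are correct and match the paper. The last two statements are not proved, though. You defer them to a case analysis you do not carry out, and the two cycle types you propose cannot complete it.

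Write the path as $y_0\dots y_{4k-8}$ and suppose both of your configurations fail.
\begin{itemize}
\item Failure of the first (ends at distance $2k-4$, closed through $v$) forces each $y_i$ to have a unique neighbour $p_i\in N(v)$, with $p_i=p_{i+2k-4}$.
\item Failure of the second (ends at distance $2k-2$, closed through a common parent) gives $p_i\neq p_{i+2k-2}$.
\end{itemize}
For $k=4$, the pattern $(p_0,\dots,p_8)=(a,b,c,d,a,b,c,d,a)$ with $a,b,c,d$ distinct satisfies both constraints. Yet $y_0y_1y_2\,c\,y_6y_7y_8\,a\,y_0$ is a $C_8$. So you need a third kind of cycle, one that avoids $v$ and uses two distinct parents as shortcuts across the path.

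Your concrete suggestion also fails. If $p(y_{2k-4})=p(y_{4k-8})=w$, closing $y_{2k-4}\dots y_{4k-8}$ through $w$ gives a cycle of length $2k-2$, not $2k$.

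Here is how the paper completes the argument. First it reduces both remaining statements to a path $v_1\dots v_{4k-7}$ whose odd-indexed vertices lie in $N^{(2)}(v)$ and whose even-indexed vertices avoid $N(v)$. A $P_{4k-6}$ in the bipartite graph contains such a subpath with both ends in $N^{(2)}(v)$. No grandparents are needed, since only the $N^{(2)}(v)$-vertices are ever routed back.

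Then fix $x_i\in N(v_i)\cap N(v)$ for each odd $i$ and argue in three steps.
\begin{itemize}
\item Cycles through $v$ force $x_1=x_{2k-3}=x_{4k-7}$ and $x_3=x_{2k-1}$.
\item Your second configuration forces $x_3\neq x_1$, since otherwise $v_1\dots v_{2k-1}x_1v_1$ is a $C_{2k}$.
\item Finally, $v_1v_2v_3\,x_3\,v_{2k-1}v_{2k}\dots v_{4k-7}\,x_1\,v_1$ has length $2+2+(2k-6)+2=2k$, a contradiction.
\end{itemize}
This crossing cycle is the idea missing from your proposal.
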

\begin{proof}
    We first note that if $G[N(v), N^{(2)}(v)]$ contained a $P_{2k}$ then it would also contain a $P_{2k-2}$ with both ends in $N(v)$ --- which would immediately give us a $C_{2k}$ in $G$. Similarly, there can not be any $P_{2k}$ in $G[N(v)]$ (in fact, we cannot even have a $P_{2k-2}$).

    Suppose therefore there is a path $v_1v_2\dots v_{4k-7}$ with $v_i \in N^{(2)}(v)$ for each odd $i \in [4k-7]$ and $v_i\not\in N(v)$ for even $i\in [4k-7]$, which would by implied by either a $P_{4k-8}$ in $G[N^{(2)}(v)]$ and a $P_{4k-6}$ in $G[N^{(2)}(v), N^{(3)}(v)]$.
    For each odd $i \in [4k-7]$ pick an $x_i \in N(v_i) \cap N(v)$ arbitrarily.
    Then, we must have $x_1 = x_{2k-3} = x_{4k-7}$, as otherwise we could find a $C_{2k}$ using the vertex $v$.
    Similarly, we must have $x_{3} = x_{2k-1} \neq x_1$, as otherwise $v_1v_2\dots v_{2k-1}x_1v_1$ would form a $C_{2k}$.
    But then, $v_1v_2v_3x_3v_{2k-1}v_{2k}\dots v_{4k-7}x_1v_1$ is a $C_{2k}$ in $G$ --- a contradiction.
\end{proof}

With that in hand, we are ready to find induced subdivisions in $C_{2k}$-free nearly-regular expanders.
\paragraph{\textit{Proof of Lemma~\ref{lemma:c2k_induced_subdivisions}.}}
    Fix $K, k \geq 3$.
    We will let $C_{\ref{lemma:c2k_induced_subdivisions}}$ be large enough compared to $k, K$ such that for all $h$ and all $d_0 \geq C_{\ref{lemma:c2k_induced_subdivisions}}h\log^3h$ we have
    \[  
        \frac{(d_0/2)^3}{10^4k^2} \geq \max\{ 10^{23}h^2(10^{12}K^5k^4d_0)\log^3(10^{12}K^5k^4d_0), 10^{17}h(Kd_0)^2\}.
    \]
    In the following, we fix $h \in \mathbb{N}$ and let $G$ be a graph satisfying the conditions of the lemma.

    First, we let $r = (d(G)/2)^3/(10^4k^2)$.
    By Theorem~\ref{theorem:pass_to_expander} there exists an induced $G' \subseteq G$ that is an $r$-expander such that $d(G') \geq d(G)/2$ and $\delta(G') \geq d(G')/2$.
    We restrict our attention to this $G'$ and write $n = |V(G')|$, $d = d(G')$ and note that since $\operatorname{ex}(n, C_{2k}) \leq 100kn^{1 + 1/k}$, we must have $n \geq (d/200k)^k$.

    The following claim will give us suitable candidates for the branch vertices and the subsets of their second neighbourhoods we will use to apply Lemma~\ref{lemma:collection_of_paths}.
    \begin{claim}\label{claim:c2k_branch_vertices}
        There exist $v_1, \dots, v_{2h} \in V(G')$ and subsets $I_i \subseteq N_{G'}(v_i)$ and $S_i \subseteq N_{G'}(I_i) \cap N^{(2)}_{G'}(v_i)$ such that for each pair of distinct $i, j\in [2h]$
    \begin{enumerate}
        \item for each $u \in I_i$ we have $|N_{G'}(u) \cap N_{G'}(v_i)| \leq 10^4K^2k^2$,
        \item $|S_i| \geq d^2/(400k)$ and for each $w \in S_i$ we have $|N_{G'}(w) \setminus N_{G'}^{(3)}(v_i)| \leq 10^6K^2 k^2$, \emph{and}
        \item $B^{(1)}(\{v_i\} \cup I_i) \cap (\{v_j\} \cup I_j \cup S_j) = \varnothing$.
    \end{enumerate}
    \end{claim}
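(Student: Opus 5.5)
The plan is to construct the triples $(v_i,I_i,S_i)$ greedily, one index at a time. Each step has two parts: a purely local cleaning around a single vertex, which uses only Lemma~\ref{lemma:no_long_path}, and an averaging choice of $v_i$ that keeps it away from what was built before. Throughout I use $\Delta(G')\le 2Kd$, $\delta(G')\ge d/2$, $n\ge (d/200k)^k\ge (d/200k)^3$, and that $d\ge C_{\ref{lemma:c2k_induced_subdivisions}}h\log^3 h$ with $C_{\ref{lemma:c2k_induced_subdivisions}}$ huge, so $h\ll d/(K^{O(1)}k^{O(1)})$.

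\emph{Local cleaning at a fixed vertex $v$.} By Lemma~\ref{lemma:no_long_path}, $G'[N(v)]$ is $P_{2k}$-free. By Erd\H{o}s--Gallai it has at most $k|N(v)|$ edges, so by Markov at most a $1/(10^3K^2k)$-fraction of $N(v)$ has more than $10^4K^2k^2$ neighbours inside $N(v)$. Removing these gives $I(v)$ with $|I(v)|\ge d/3$, and condition~1 holds. Each $u\in I(v)$ sends at least $d/2-1-10^4K^2k^2\ge d/3$ edges into $N^{(2)}(v)$. The bipartite graph $G'[N(v),N^{(2)}(v)]$ is $P_{2k}$-free and hence has at most $2k(|N(v)|+|T|)$ edges on any vertex set $N(v)\cup T$. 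Applying this with $T=N(I(v))\cap N^{(2)}(v)$ gives $|T|\ge d^2/(20k)$.

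Next I clean $T$. The edges of $G'[N^{(2)}(v)]$ and of $G'[N(v),N^{(2)}(v)]$ number at most $O(k)(|N(v)|+|N^{(2)}(v)|)=O(kK^2d^2)$, since $G'[N^{(2)}(v)]$ is $P_{4k-8}$-free. Every neighbour of a vertex $w\in N^{(2)}(v)$ that lies outside $N^{(3)}(v)$ is in $N(v)\cup N^{(2)}(v)$. So the number of $w\in N^{(2)}(v)$ with $|N(w)\setminus N^{(3)}(v)|>10^6K^2k^2$ is $O(d^2/(10^5k))$, which is much smaller than $|T|$. Deleting these vertices from $T$ leaves a set of size at least $d^2/(40k)$ satisfying condition~2 with room to spare.

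\emph{Disjointness via averaging.} Suppose $(v_j,I_j,S_j)$ for $j<i$ are fixed, and set $X=\bigcup_{j<i}(\{v_j\}\cup I_j\cup S_j)$, so $|X|\le 2h\cdot 5K^2d^2$. Condition~3 in both directions requires that no vertex of $\{v_i\}\cup I_i\cup S_i$ lies in $B^{(1)}(X')$ with $X'=\bigcup_{j<i}(\{v_j\}\cup I_j)$, and that no vertex of $B^{(1)}(\{v_i\}\cup I_i)$ lies in $X$. Both are implied if the new structure avoids $Y=B^{(1)}(X)$, which has size $O(hK^3d^3)$. I choose $v_i$ to minimise $|B^{(3)}(v)\cap Y|$. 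Double counting gives
\[
\sum_v |B^{(3)}(v)\cap Y|\le |Y|(2Kd)^3 ,
\]
so some $v$ with $v\notin Y$ has $|B^{(3)}(v)\cap Y|\le O(hK^6d^6)/n=O(hK^6(200k)^3d^3)$.

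Here the assumption $h\ll d$ enters, but it is not yet enough. This intersection must be small compared with $d^2/k$ after accounting for its effect on $I_i$ and $S_i$. Since only $h\ll d$ is available, I will refine the count. A single bad vertex $y\in Y\cap N(v)$ forces me to delete only $y$ from $I(v)$. A bad vertex $y$ at distance $2$ or $3$ forces deletion of at most $\Delta$ vertices of $I(v)$ or of $S$ adjacent to it. I will therefore weight the bad vertices by the level at which they sit and double count separately for each level $\ell\in\{1,2,3\}$, using
\[
\sum_v |N^{(\ell)}(v)\cap Y|\le |Y|(2Kd)^{\ell}.
\]
This gives expected loss $O(|Y|(Kd)^{\ell}\cdot (Kd)^{3-\ell}/n)$ per level.

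I then delete from $I(v_i)$ every vertex in or adjacent to $Y$. I set $S_i=T\cap N(I_i)\setminus B^{(1)}(Y)$, minus the high-degree vertices removed earlier. The total loss is $O(hK^{O(1)}k^3 d^2)\cdot(d^{-1}\text{-type savings})$, and this must stay below $|T|/2$.

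I expect this bookkeeping to be the main obstacle. The goal is to show that the losses in $I_i$ are $o(d)$ and that the losses in $S_i$ are $o(d^2/k)$. If the naive averaging falls short by a factor of $K^{O(1)}k^{O(1)}$, I will exploit $h\log^3 h\le d/C_{\ref{lemma:c2k_induced_subdivisions}}$. If even that fails, I will use the sharper lower bound on $n$ that comes from $r$-expansion of the large set $B^{(2)}(v)$.
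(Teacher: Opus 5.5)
\textbf{The disjointness step has a genuine gap, and it is not a bookkeeping issue.} Your local cleaning at a single vertex is fine; it is essentially the paper's $R_1,R_2,R_3$ cleaning. The problem is that you freeze the earlier triples and require the new structure to avoid $Y=B^{(1)}(X)$ with $X\supseteq\bigcup_{j<i}S_j$.

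Each $S_j$ has order $d^2$ vertices, so $N(S_j)$ can have order $d^3$ vertices. But $n$ is only guaranteed to be at least $(d/200k)^k$, which for $k=3$ is of order $d^3$. This is sharp: incidence graphs of generalized quadrangles are $C_6$-free with $n\approx 2d^3$, and there $N(S_j)$ is a constant fraction of a colour class. So once $h$ exceeds a constant, nothing prevents $Y$ from covering almost all of $V(G')$, and then no choice of $v_i$ leaves a large $I_i$.

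Your own estimate shows the size of the shortfall. At level one the averaging bound is $|Y|\cdot 2Kd/n$, of order $hK^4k^3d$, which already exceeds $|I(v)|\approx d/3$. The level-by-level count gives $|Y|(Kd)^{\ell}(Kd)^{3-\ell}/n=|Y|(Kd)^3/n$ at every level, which is exactly the unrefined bound. The deficit is a factor polynomial in $d$, so neither $h\log^3h\le d/C_{\ref{lemma:c2k_induced_subdivisions}}$ nor sublinear expansion (which does not force $n\gg d^3$) can repair it.

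Condition~3 never asks for $S_i\cap N(S_j)=\varnothing$, so that part of your requirement is unnecessary. But even the part that is genuinely needed once $S_j$ is frozen, namely $(\{v_i\}\cup I_i)\cap N(S_j)=\varnothing$, is unattainable in general.

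\textbf{The missing idea is to forbid only a set of size $O(hd^2)$, and to obtain $B^{(1)}(\{v_i\}\cup I_i)\cap S_j=\varnothing$ by trimming the \emph{old} sets afterwards.}

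First, let $D_j$ be the set of vertices outside $B^{(2)}(v_j)$ with at least $4k$ neighbours in $N^{(2)}(v_j)$. Since $G'[N^{(2)}(v_j),D_j]$ contains no long path (Lemma~\ref{lemma:no_long_path}), one gets $|D_j|\le|N^{(2)}(v_j)|\le 4K^2d^2$.

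Next, the paper forbids $F=\bigcup_{j<i}\big(B^{(2)}(v_j)\cup D_j\big)$, which has size $O(hK^2d^2)\le n/(8K)$. It passes to $W\subseteq V(G')\setminus F$ with $\delta(G'[W])\ge d/4$, takes an arbitrary $v_i\in W$, and performs the local cleaning inside $W$. The second neighbourhood of $v_i$ in $G'[W]$ still has at least $d^2/(80k)$ vertices.

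Avoiding $F$ gives the direction $B^{(1)}(\{v_j\}\cup I_j)\cap(\{v_i\}\cup I_i\cup S_i')=\varnothing$ for $j<i$ at no cost. Also, since no vertex of $\{v_i\}\cup I_i$ lies in $D_j$, each has fewer than $4k$ neighbours in $S_j'$.

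Finally, one sets $S_j:=S_j'\setminus\bigcup_{i\ne j}N(\{v_i\}\cup I_i)$. This loses at most $2h\cdot 4k(1+\Delta(G'))=O(hkKd)\ll d^2/k$ vertices, and conditions~1 and~2 survive because $S_j$ only shrinks.
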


    Before proving the claim, we want to conclude the proof of the lemma assuming that the claim holds. We will apply the Lemma~\ref{lemma:collection_of_paths} to the collection of sets $S_i$ with $U=\bigcup_{i=1}^{2h} B_{G'}^{(2)}(v_i)$. However, connecting the paths of this collection to the branch vertices $v_i$ requires care -- a priori, it may not be possible to connect any two vertices $v, v'\in S_i$ to $v_i$ using disjoint paths with no crossing edges.

    To get around this issue, we define an auxiliary graph $H$ by adding suitable conflict edges to $G'$, and apply Lemma~\ref{lemma:collection_of_paths} to $H$.
    We say that vertices $v, v'\in S_i$ are in conflict if there is a vertex $y\in I_i$ adjacent to both, or if there is an edge $yy'$ in $I_i$ such that $yv, y'v'\in E(G')$. More formally, the set of conflicting pairs is
    \[
        E = \!\!\bigcup_{i\in [2h]} \!\Big\{\{v, v'\} \!\in\! \binom{S_i}{2}\! : \text{there exist } y, y'\!\in I_i\text{ such that } yv, y'v'\!\in E(G') \text{ and } \big(y=y' \text{ or }yy'\in E(G')\big)\Big\}.
    \]
    Define the graph $H$ with $V(H) = V(G')$ and $E(H) = E(G') \cup E$. A vertex $v$ is in conflict with at most
    \[
        10^6K^2k^2 \cdot (1 + 10^4K^2k^2)\cdot \Delta(G') \leq 10^{11}K^4k^4\Delta(G)
    \]
    other vertices $v'$. To see this, note that given $v$, there are at most $10^6K^2k^2$ choices for $y$ (due to property 2 from Claim~\ref{claim:c2k_branch_vertices}), and at most $10^4K^2k^2+1$ choices for $y'$ (due to property 1), and at most $\Delta(G')$ choices for $v'$. So, we have that $\Delta(H) \leq \Delta(G') + 10^{11}K^4k^4 \Delta(G) \leq 10^{12}K^5k^4d$.

    Let us justify that Lemma~\ref{lemma:collection_of_paths} applies to the graph $H$ with the sets $S_1, \dots, S_{2h}$ and $U = \bigcup_{i \in [2h]} B^{(2)}_{G'}(v_i)$. Note that
    \[
        |U| \leq 2h (1 + \Delta(G') + \Delta(G')^2) \leq r/(10^{16}).
    \]
    Since the graph $G[N^{(2)}(v_i), N^{(3)}(v_i)]$ is bipartite and is $P_{4k}$-free its average degree is at most $4k$ and so $|S| \geq |S_i|/2$, we have
    \[
        |N_{G'}(S)|\geq |N_{G'}(S)\cap N^{(3)}_{G'}(v_i)| \geq |S| \cdot (\delta(G') - 10^6K^2k^2) / (4k) \geq d^3/(10^4k^2) \geq r.
    \]
    Finally, for each $v \in V(H)$ we have $|N_H(v) \cap (S_1 \cup \dots \cup S_{2h})| \leq \Delta(H) \leq (hd^2/400k)/(10^{8}h^2\log^3\Delta(H))$.
    Hence, the premise of Lemma~\ref{lemma:collection_of_paths} holds, and we can find an $I \subseteq [2h]$ of size $h$ and an induced collection of disjoint induced paths $\{P_{i, j} : \{i, j\} \in \binom{I}{2}\}$ such that $P_{i, j}$ is a path from $S_i$ to $S_j$.
    We note that since each edge in $E$ lies fully inside $U$, each $P_{i, j}$ is also an (induced) path in $G'$.

    To conclude, note that any path $P_{i, j}$ can be extended to be a path from $v_i$ to $v_j$ by adding one vertex from $I_i \cap N_{G'}(P_{i, j})$ and $I_j \cap N_{G'}(P_{i, j})$ to each of its ends. 
    By the definition of $E$, no matter how the paths $P_{i, j}$ are extended, the resulting paths together with $\{v_i : i \in I\}$ form an induced proper subdivision of $K_h$ in $G$.

    \begin{proof}[Proof of Claim~\ref{claim:c2k_branch_vertices}]
        We will first find vertices $v_1, \dots, v_{2h} \in V(G')$ and corresponding sets $I_i$ and $S_i'$, such that
        \begin{enumerate}
            \item for each $u \in I_i$ we have $|N_{G'}(u) \cap N_{G'}(v_i)| \leq 10^4K^2k^2$,
            \item $|S_i'| \geq d^2/(200k)$ and for each $w \in S_i'$ we have $|N_{G'}(w) \setminus N_{G'}^{(3)}(v_i)| \leq 10^6 K^2k^2$, \emph{and}
            \item $B^{(1)}_{G'}(\{v_i\} \cup I_i) \cap (\{v_j\} \cup I_j) = \varnothing$ and $|N_{G'}(w) \cap S_i'| \leq 4k$ for all $i\neq j$ and $w \in \{v_j\} \cup I_j$.
        \end{enumerate}
        Given this we can clean these small intersections by taking $S_i:= S_i'\setminus \bigcup_{j\neq i}N(\{v_j\}\cup I_j)$.
        Then, we get $|S_i|\ge d^2/(200k)- 2h\cdot 4k(1+\Delta(G))\ge d^2/400k$ (since $10^4 k^2Kh\le d_0/2\leq d$), completing the proof. Thus, it remains to find these initial sets $I_i$ and $S_i'$. 

        We find these sets iteratively --- fix some $i \in [2h]$ and suppose that we have found $v_j, I_j, S_j'$ for all $j < i$.
        To find $v_i, I_i, S_i'$, we define $D_j = \{x \in V(G') \setminus B_{G'}^{(2)}(v_j): |N_{G'}(x) \cap N^{(2)}_{G'}(v_j)| \geq 4k\}$ for $j < i$.
        Since $G[N^{(2)}_{G'}(v_j), D_j]$ contains no path of length $4k-1$ due to Lemma~\ref{lemma:no_long_path} we have $|D_j| \leq |N_{G'}^{(2)}(v_j)| \leq \Delta(G)^2\leq 4K^2d^2$. Indeed, otherwise we get a bipartite graph whose larger part has all the vertices having degree at least $4k$, which has to contain $P_{4k-1}$, contradiction.
        

        Let $F = \bigcup_{j < i} B_{G'}^{(2)}(v_j) \cup D_j$ and note that $|F|\le i (1+Kd +4K^2d^2+4K^2d^2)\le 2h\cdot 9K^2d^2\le n/8K$.
        If we remove $F$ from $G$, the remaining graph has average degree at least $2{(nd/2-\Delta(G)\cdot n/8K)}/{n}\geq d/2$. 
        So there exists $W \subseteq V(G')\backslash F$ such that for $G[W]$ we have $\delta(G[W]) \geq d/4$ and $\Delta(G[W]) \leq 2Kd$.

        Pick an arbitrary vertex $v_i \in W$.
        Let 
        \begin{align*}
            R_1 &= \{u \in N_{G'}(v_i): |N_{G'}(v_i) \cap N_{G'}(u)| \geq 10^4K^2k^2\},\\
            R_2 &= \{w \in N_{G'}^{(2)}(v_i): |N_{G'}(w) \cap N_{G'}(v_i)| \geq 10^4K^2k^2\},\\
            R_3 &= \{ w\in N_{G'}^{(2)}(v_i): |N_{G'}(w) \cap N_{G'}^{(2)}(v_i)| \geq 10^5K^2k^2\},
        \end{align*}
         and define $I_i= N_{G'}(v_i)\cap W\backslash R_1$ and $S_i' = (N_{G'}(I_i) \cap N_{G'}^{(2)}(v_i)\cap W) \setminus (R_2 \cup R_3)$. 
         
         Since $I_i, S_i'$ are disjoint from $F$, we have $B^{(1)}_{G'}(\{v_i\} \cup I_i) \cap (\{v_j\} \cup I_j) = \varnothing$ for all $j<i$. Moreover, we have that $N_{G'}(w)\cap S_i' = \emptyset$ for $w\in \bigcup_{j<i} \{v_j\}\cup I_j$ (as $N_{G'}(w)\subset F$). Meanwhile, for $w\in \{v_i\}\cup I_i$, $w\in W$ implies that $|N_{G'}(w)\cap S_j'|< 4k$ for all $j<i$ (as $D_j \subset F$ as well). This justifies the third condition.
         
         This definition further ensures that for $u\in I_i$ we have $|N_{G'}(u)\cap N_{G'}(v_i)|\leq 10^4K^2k^2$, justifying the first condition.
         
         Further, for $w\in S_i'$ we have $|N_{G'}(w) \setminus N_{G'}^{(3)}(v_i)|\leq |N_{G'}(w) \cap N_{G'}(v_i)| + |N_{G'}(w) \cap N_{G'}^{(2)}(v_i)| \leq 10^6 K^2k^2$. So, it only remains to show that $|S_i'|\geq d^2/(200k)$.
        
        Since $G[N_{G'}(v_i)]$ is $P_{2k}$-free, we have $e(N_{G'}(v_i)) \leq 2k|N_{G'}(v_i)|$. So, by Markov's inequality $|R_1| \leq |N_{G'}(v_i)| / (5000K^2k) \leq d / (2000Kk)$.
        
        Similarly, since $G[N_{G'}(v_i), N^{(2)}_{G'}(v_i)]$ is $P_{2k}$-free and $\delta(G') \geq d/2$, we have $|N^{(2)}_{G'}(v_i)| \geq |N_{G'}(v_i)|$ and so $e(N_{G'}(v_i), N^{(2)}_{G'}(v_i))\leq k|N_{G'}(v_i)|+ k|N^{(2)}_{G'}(v_i)|\leq 2k|N^{(2)}_{G'}(v_i)|$. Hence $|R_2| \leq |N^{(2)}_{G'}(v_i)|/(2000K^2k) \leq d^2/(500k)$. Similarly, since $G[N_{G'}^{(2)}(v_i)]$ is $P_{4k}$-free, we get $|R_3| \leq |N^{(2)}(v_i)| / (10^4K^2k) \leq d^2/(10^3k)$.
        
        Finally, by repeating the same argument for $G[W]$ and using $\delta(G[W])\geq d/4$, we get $2k|N^{(2)}_{G[W]}(v_i)|\geq e(N_{G[W]}(v_i), N^{(2)}_{G[W]}(v_i))\geq |N_{G[W]}(v_i)|\delta(G[W]) - 2e(N_{G[W]}(v_i))$. Therefore,
        \[
            |N_{G[W]}^{(2)}(v_i)| \geq (|N_{G[W]}(v_i)|\delta(G[W]) - 4k|N_{G[W]}(v_i)|)/4k \geq d^2/(80k),
        \]
        and so $|S_i'| \geq |N_{G[W]}^{(2)}(v_i)| - \Delta(G)|R_1| - |R_2| - |R_3| \geq d^2/(200k)$.\hspace{5.5cm} \qedsymbol
    \end{proof}

\vspace{-0.5cm}
\section{Proofs of two main results}\label{sec:main_proofs}
In this section, we prove Theorem~\ref{theorem:kst} and Theorem~\ref{theorem:C2k}.
With everything set up, this boils down to invoking the above lemmas in the right order and with the right parameters.
\begin{proof}[Proof of Theorem~\ref{theorem:kst}]
    Fix $s \geq 2$, and let us begin by defining the relevant parameters.
    We let $C_{\ref{lemma:dichotomy}}$ be the constant from Lemma~\ref{lemma:dichotomy} and $C_{\ref{lemma:k2t_induced_subdivisions}}$ be the constant from Lemma~\ref{lemma:k2t_induced_subdivisions} for the parameter $K_{\ref{lemma:k2t_induced_subdivisions}}:= 16C_{\ref{lemma:dichotomy}}$.
    Let $C \geq(10s)^{6s}$ be chosen such that for all $h \geq t \geq s$ and all $d \geq C t^{s-1}h^{2(s-1)}\log^{7(s-1)}d$, setting $K = 4t^2s^2d^{200s}$, we have $d \geq 2000\log^2K \left( 80C_{\ref{lemma:dichotomy}}C_{\ref{lemma:k2t_induced_subdivisions}}h^2t \log^2 d \cdot \log^3(h\log d) \right)^{s-1}$.

    In the following, we let $G$ be a $K_{s, t}$-free graph on $n$ vertices with average degree $d = d(G) \geq Ct^{s-1}h^{2(s-1)}\log^{7(s-1)}d$.
    We note that to show that $G$ contains an induced subdivision of any graph $H$ on $h$ vertices, it is sufficient to find an induced proper subdivision of $K_h$ in $G$.
    We also note that we can assume that $G$ is $d$-degenerate and that $\delta(G) \geq d/2$ --- otherwise we can pass to an induced subgraph of $G$ with larger average degree and show the statement there.

    We first apply Lemma~\ref{lemma:dichotomy} with the parameter $K = 4t^2s^2d^{200s}$.
    We get that either we can partition the vertex set of $G$ into $V(G) = A \cup B$ such that $|A| \geq K|B|$ and $e_G(A, B) \geq nd(G)/8$ or there exist an induced subgraph $G_1 \subseteq G$ with $d(G_1) \geq d(G) / (2000\log^2 K)$ and $\Delta(G_1) \leq C_{\ref{lemma:dichotomy}}d(G_1)$.
    We treat the two cases separately.

    In the former case, we first let $A' = \{ a \in A:d(a, B) \geq d/10^3 \text{ and } d(a) \leq 10^3\cdot d \}$.
    Since $G$ is $d(G)$-degenerate, the number of vertices with degree more than $10^3d$ is at most $n/10^3$.
    Similarly, since $e(G[A])\leq nd/100$, the number of vertices $a \in A$ with $d(a, A) \geq d/4$ is at most $n/4$.
    In particular, we get 
    \[  
    |A'| \geq (2/3-10^{-3}-1/4)n \geq n/4 \geq t^2s^2d^{200s}\cdot |B|,
    \]
    where we used the choice of $K$.
    Noting that $C \geq (10s)^{6s}$, by applying Lemma~\ref{lemma:lopsided_case} to the graph $G[A' \cup B]$ we can therefore find an induced proper subdivision of $K_h$ in $G$.

    In the latter case, if $s \geq 3$, we apply Lemma~\ref{lemma:reduction_to_k2t} to the graph $G_1$ to get an induced $K_{2, \log d}$-free subgraph $G_2 \subseteq G_1$ with 
    \[
        d(G_2) \geq d(G_1)^{1/(s-1)}/(40C_{\ref{lemma:dichotomy}}t \log d)\geq C_{\ref{lemma:k2t_induced_subdivisions}} h^2\cdot \log d \cdot \log^3 (h \log d)
    \]
    and $\Delta(G_2) \leq 16C_{\ref{lemma:dichotomy}} d(G_2).$
    Finally, by Lemma~\ref{lemma:k2t_induced_subdivisions} we can find an induced proper subdivision of $K_h$ in $G_2$.

    If $s = 2$, then by the same computation we can simply find an induced proper subdivision of $K_h$ in $G_1$.
\end{proof}

The proof of Theorem~\ref{theorem:C2k} follows the same lines.
\begin{proof}[Proof of Theorem~\ref{theorem:C2k}]
    Fix $k \geq 3$ --- we first set-up the relevant parameters to fix the implicit constant for the theorem.
    We let $C_{\ref{lemma:dichotomy}}$ be the constant from Lemma~\ref{lemma:dichotomy} and $C_{\ref{lemma:c2k_induced_subdivisions}}$ be the constant from Lemma~\ref{lemma:c2k_induced_subdivisions} with parameters $k_{\ref{lemma:c2k_induced_subdivisions}} := k$ and $K_{\ref{lemma:c2k_induced_subdivisions}} := 2C_{\ref{lemma:dichotomy}}$.
    We let $C = C(k) \geq 10^{10}k^2$ be chosen such that for all $d \geq Ch\log^5h$, setting $K = 8d^{200}$, we have $d \geq 2000C_{\ref{lemma:c2k_induced_subdivisions}}  h\log^2K \log^3h$. 

    In the following, we let $G$ be a $C_{2k}$-free graph on $n$ vertices with average degree $d = d(G) \geq Ch\log^5h$. 
    We note that to show that $G$ contains an induced subdivision of any graph $H$ on $h$ vertices, it is sufficient to find an induced proper subdivision of $K_h$ in $G$.
    We also note that we can assume that $G$ is $d$-degenerate and that $\delta(G) \geq d/2$ --- otherwise we can pass to an induced subgraph of $G$ with larger average degree and show the statement there.

    We first apply Lemma~\ref{lemma:dichotomy} with the parameter $K = 8d^{200}$.
    We get that either we can partition the vertex set of $G$ into $V(G) = A \cup B$ such that $|A| \geq K|B|$ and $e_G(A, B) \geq nd/8$ or that there exists an induced subgraph $G_1 \subseteq G$ with $d(G_1) \geq d(G)/(2000\log^2K)$ and $\Delta(G_1) \leq C_{\ref{lemma:dichotomy}} d(G_1)$.
    We treat the two cases separately.

    In the former case, we first let $A' = \{ a \in A:d(a, B) \geq d/10^3 \text{ and } d(a) \leq 10^3d\}$.
    Since $G$ is $d$-degenerate, the number of vertices with degree more than $10^3d$ is at most $n/10^3$.
    Similarly, since $e(G[A])\leq nd/100$, the number of vertices $a \in A$ with $d(a, A) \geq d/4$ is at most $n/4$.
    In particular, we get 
    \[
        |A'| \geq (2/3-1/10^3-1/4)n\geq n/4 \geq d^{200}\cdot |B|,
    \]
    where we used the choice of $K$.
    Using our choice of $C$, by applying Lemma~\ref{lemma:lopsided_case} to the graph $G[A' \cup B]$ we can therefore find an induced proper subdivision of $K_h$ in $G$.

    Otherwise, since $d(G_1) \geq C_{\ref{lemma:c2k_induced_subdivisions}}h\log^3h$, we find an induced proper subdivision in $G_1$ by Lemma~\ref{lemma:c2k_induced_subdivisions}.
\end{proof}

\section{Concluding remarks}\label{sec:concluding_remarks}

In Theorem~\ref{theorem:C2k}, we have shown that every $C_{2k}$-free graph of average degree at least $C_k h (\log h)^5$ contains an induced subdivision of $K_h$. As previously discussed, this bound is optimal up to a factor of $O((\log h)^{5})$. We think it would be interesting to remove these logarithmic factors and show that every $C_{2k}$-free graph of average degree at least $Ch$ contains an induced subdivision of $K_h$.

Theorem~\ref{theorem:kst} shows that $K_{s, t}$-free graphs of average degree $C_{s,t} h^{2(s-1)}(\log h)^{7(s-1)}$ contain induced subdivisions of $K_h$. While removing the logarithmic factors here may also be interesting, the lower bounds here are further off: the gap between upper and lower bounds is a small polynomial factor in $h$, whose exponent tends to zero when $t$ tends to infinity.

Recall that, to show the lower bound, we have constructed a $K_{s, t}$-free graph without independent sets of size say $h^2/4$, and we have argued that any induced subdivision of $K_h$ would have to contain an independent set of at least this size. This approach basically reduces the problem of proving the lower bounds to the problem of obtaining the lower bounds on the off-diagonal Ramsey numbers $R(K_{s, t}, K_{h^2/4})$. Even in the simplest case $s=t=2$, this is a well-known open problem of estimating $R(C_4, K_n)$. Thus, unless some additional properties of subdivisions are used, it seems rather hard to improve the lower bounds corresponding to Theorem~\ref{theorem:kst}.

\end{document}